\documentclass[11pt, letterpaper]{article}
\usepackage{fullpage} 
\usepackage[left=35mm, top=20mm, right=35mm, bottom=25mm]{geometry} 
\usepackage{xspace,xcolor,graphicx,tabularx} 
\usepackage{mathtools,amsthm,amssymb,mathrsfs} 
\usepackage{stmaryrd} 
\usepackage{enumitem} 
\setenumerate[0]{label={\normalfont (\roman*)}}

\usepackage[T1]{fontenc}
\usepackage[UKenglish]{babel}
\usepackage[scaled=0.86]{helvet}
\usepackage{mathptmx}
\DeclareMathAlphabet{\mathcal}{OMS}{ntxm}{m}{n}
\usepackage{dsfont} 
\let\mathbb\relax
\let\mathbb\mathds
\usepackage{stmaryrd} 
\makeatletter
\renewcommand{\paragraph}{%
  \@startsection{paragraph}{4}%
  {\z@}{2.25ex \@plus 1ex \@minus .2ex}{-1em}%
  {\normalfont\normalsize\bfseries}%
}
\makeatother
\usepackage{authblk}

\definecolor{linkblue}{HTML}{001487}
\usepackage[colorlinks=true,allcolors=linkblue]{hyperref}
\usepackage{url}
\usepackage{cleveref}

\newtheorem{theorem}{Theorem}[section]
\newtheorem*{theorem*}{Theorem}
\newtheorem{proposition}[theorem]{Proposition}

\newtheorem{lemma}[theorem]{Lemma}
\newtheorem{claim}[theorem]{Claim}

\theoremstyle{remark}
\newtheorem{remark}[theorem]{Remark}
\theoremstyle{definition}
\newtheorem{definition}[theorem]{Definition}

\numberwithin{equation}{section}

\newcommand{\setft}[1]{\textnormal{#1}}
\newcommand{\eps}{\epsilon}

\newcommand{\id}{\setft{id}}
\newcommand{\C}{\ensuremath{\mathds{C}}}
\newcommand{\N}{\ensuremath{\mathds{N}}}

\newcommand{\Z}{\ensuremath{\mathds{Z}}}

\newcommand{\ot}{\ensuremath{\otimes}}

\newcommand{\Tr}{\mathrm{Tr}}
\newcommand{\tr}{\mathrm{tr}}
\newcommand{\rk}{\mathrm{rank}}

\newcommand{\norm}[1]{\left\lVert#1\right\rVert}

\newcommand{\cyl}[1]{\llbracket #1\rrbracket}

\DeclareMathOperator{\Id}{\mathrm{I}}

\DeclareMathOperator{\Tow}{Tow}
\DeclareMathOperator{\per}{per}
\DeclareMathOperator{\aper}{aper}
\DeclareMathOperator{\ext}{ext}
\DeclareMathOperator{\HS}{HS}
\DeclareMathOperator{\Proj}{Proj}

\newcommand{\SRad}{{\texttt{SRad}}}
\newcommand{\SRate}{{\texttt{SRate}}}

\DeclarePairedDelimiterX\braket[2]{\langle}{\rangle}{#1 \delimsize\vert #2}

\newcommand{\cH}{\ensuremath{\mathcal{H}}}

\newcommand{\acts}{\curvearrowright}

\DeclareSymbolFont{greekletters}{OML}{ntxmi}{m}{it}
\DeclareMathSymbol{\alpha}{\mathord}{greekletters}{"0B}
\DeclareMathSymbol{\beta}{\mathord}{greekletters}{"0C}
\DeclareMathSymbol{\gamma}{\mathord}{greekletters}{"0D}
\DeclareMathSymbol{\delta}{\mathord}{greekletters}{"0E}
\DeclareMathSymbol{\epsilon}{\mathord}{greekletters}{"0F}
\DeclareMathSymbol{\zeta}{\mathord}{greekletters}{"10}
\DeclareMathSymbol{\eta}{\mathord}{greekletters}{"11}
\DeclareMathSymbol{\theta}{\mathord}{greekletters}{"12}
\DeclareMathSymbol{\iota}{\mathord}{greekletters}{"13}
\DeclareMathSymbol{\kappa}{\mathord}{greekletters}{"14}
\DeclareMathSymbol{\lambda}{\mathord}{greekletters}{"15}
\DeclareMathSymbol{\mu}{\mathord}{greekletters}{"16}
\DeclareMathSymbol{\nu}{\mathord}{greekletters}{"17}
\DeclareMathSymbol{\xi}{\mathord}{greekletters}{"18}
\DeclareMathSymbol{\pi}{\mathord}{greekletters}{"19}
\DeclareMathSymbol{\rho}{\mathord}{greekletters}{"1A}
\DeclareMathSymbol{\sigma}{\mathord}{greekletters}{"1B}
\DeclareMathSymbol{\tau}{\mathord}{greekletters}{"1C}
\DeclareMathSymbol{\upsilon}{\mathord}{greekletters}{"1D}
\DeclareMathSymbol{\phi}{\mathord}{greekletters}{"1E}
\DeclareMathSymbol{\chi}{\mathord}{greekletters}{"1F}
\DeclareMathSymbol{\psi}{\mathord}{greekletters}{"20}
\DeclareMathSymbol{\omega}{\mathord}{greekletters}{"21}
\DeclareMathSymbol{\varepsilon}{\mathord}{greekletters}{"22}
\DeclareMathSymbol{\vartheta}{\mathord}{greekletters}{"23}
\DeclareMathSymbol{\varpi}{\mathord}{greekletters}{"24}
\DeclareMathSymbol{\varrho}{\mathord}{greekletters}{"25}
\DeclareMathSymbol{\varsigma}{\mathord}{greekletters}{"26}
\DeclareMathSymbol{\varphi}{\mathord}{greekletters}{"27}

\begin{document}

\title{Polynomial Hilbert--Schmidt stability of the lamplighter group}

\author{Alon Dogon and Thomas Vidick}

\date{\vspace{-1cm}}

\maketitle

\begin{abstract}
We establish explicit polynomial bounds on the stability rate and radius of the lamplighter group. 
This provides the first example of an infinitely presented group with an explicit upper bound on the stability radius, answering a question of the first author, Levit and Vigdorovich.
Our approach is based on new dynamical notions, including the analysis of \emph{approximately invariant measures}. 
We establish an effective continuous tower decomposition procedure for approximately invariant measures with presence of periodic points. 
To achieve polynomial bounds we appeal to recent techniques from descriptive combinatorics and distributed LOCAL algorithms.

\end{abstract}


\section{Introduction}

The \emph{normalized Hilbert--Schmidt norm} on matrices is defined by the formula
$$ \|A  \|_{\HS} = \sqrt{\tr(A^*A)} \text{ for $A \in M_d(\mathbb C)$,}$$
where $\tr(A) = \frac{1}{d} \Tr(A)$ is the \emph{normalized trace}. 
In this paper, we give a new example of a group with effective bounds on its Hilbert--Schmidt stability.
In our context, stability refers to the situation where \emph{approximate group representations}, in the Hilbert--Schmidt sense, can be shown to be close to \emph{genuine} group representations.
This can be traced back to a general question of Ulam \cite{Ulam}: ``Is a map between groups which is $\delta$-close to being a homomorphism, necessarily $\eps$-close to a true homomorphism?''. 
A classic instance of the problem is of almost commuting matrices, which arose in work of von Neumann on foundations of quantum mechanics \cite{vNeu}. Polynomial bounds for stability were established in this case \cite{Glebsky}.
See \cite{Dog_Thesis} for an introduction to Hilbert--Schmidt stability.

We will consider \emph{quantitative} and \emph{efficient} forms of Ulam's question: How small does $\delta$ need to be in terms of $\epsilon$? 
We focus on the \emph{lamplighter group}, which can be defined by the following infinite presentation with two generators: 
$$\mathbb Z/2 \wr \mathbb Z = \langle a,t | a^2, \; [a,t^nat^{-n}]\; n \in \N \rangle.$$
Our main result, stated below in a self-contained manner, provides polynomial bounds on the stability of this group: 
    
\begin{theorem}\label{thm:main}
Let $0<\kappa$ and let $M = \left\lceil C\kappa^{-21}\right\rceil, \; \eps = c\,\kappa^7/M^2,$
for universal constants $C,c>0$.
Then for every $d \in \mathbb N$ and $A,T \in U(d)$ unitaries such that
$$ \| A^2 - 1 \|_{\HS} \leq \eps\quad \text{ and }\quad \| [A, T^{-i}AT^{i}] \|_{\HS} \leq \eps\quad \text{ for all}\quad 0\leq i\leq 2M\;,$$
there exist unitaries $\widetilde{A},\widetilde{T} \in U(d)$ such that $\widetilde{A}^2=1$ and $\widetilde{T}^{-i}\widetilde{A} \widetilde{T}^{i}$ commutes with $\widetilde{A}$ for all $i \in \Z$, and further:
\begin{align*}
   \| A - \widetilde{A}\|_{\HS} \leq \kappa\;, \quad  \| T - \widetilde{T}\|_{\HS} \leq \kappa\;.
\end{align*}
\end{theorem}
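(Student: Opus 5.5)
Since the relators only involve finitely many conjugates of $A$, the strategy is to first fix the abelian ``lamp'' part and then the shift $T$, using a dynamical picture.

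\textbf{Step 1: simultaneous diagonalization.} The elements $A_i := T^{-i}AT^{i}$ for $0\le i\le 2M$ are approximate involutions that pairwise almost commute. Since $\|[A_i,A_j]\|_{\HS} = \|[A,T^{-(j-i)}AT^{j-i}]\|_{\HS}\le\eps$, all pairs are covered. I would invoke polynomial stability of $(\Z/2)^{k}$ in the Hilbert--Schmidt norm (finite abelian groups are stable with linear dependence, for example via the averaging argument of Gowers--Hatami or Kazhdan). This gives commuting involutions $B_0,\dots,B_{M}$ with $\|A_i-B_i\|_{\HS}\lesssim M\eps$. Their joint eigenspaces define a probability measure $\mu$ on $\{\pm1\}^{M+1}$, with weights proportional to the dimensions. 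Conjugation by $T$ approximately maps $B_i$ to $B_{i+1}$, so $\mu$ is \emph{approximately shift-invariant}: the marginals on windows $[0,M-1]$ and $[1,M]$ differ by $O(M\eps)$ in total variation. Equivalently, $T$ nearly maps the eigenspace for a pattern $x$ to the eigenspace for the shifted pattern.

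\textbf{Step 2: tower decomposition.} A genuine representation of $\Z/2\wr\Z$ on $\C^d$ amounts to an honest $\Z$-action, on a finite set of ``configurations'', that intertwines with a genuine shift. So the task is to build from $\mu$ a genuinely invariant finite structure. I would construct a Rokhlin-type decomposition of the pattern space into towers of heights between roughly $\kappa^{-c}$ and $M$, on which $\tilde T$ acts as a cyclic shift.

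Aperiodic patterns are handled by choosing base markers by a LOCAL-algorithm (maximal independent set / ruling set) procedure on windows. Markers must be spaced at least $L$ and at most $2L$ apart, and be determined by local windows, so that approximate invariance transfers to the tower. Patterns that are periodic with small period $p$ need separate treatment: they form their own cycles, and the lamp configuration is made exactly periodic. This continuous tower decomposition in the presence of periodic points is the step I expect to be the main obstacle. The error budget must stay polynomial, which forces the use of a distributed-algorithm construction of markers with locality $\mathrm{poly}(1/\kappa)$ rather than a naive ergodic argument. The boundary measure of the towers, of order $1/L$, is what ultimately dictates $M\sim\kappa^{-21}$.

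\textbf{Step 3: reassembly.} On each tower level, define $\tilde A$ as the diagonal involution given by the $0$-th coordinate of the (corrected) pattern. Define $\tilde T$ as the partial isometry equal to the polar part of the compression of $T$ mapping level $j$ to level $j+1$. At the top of each tower, close it up with an arbitrary unitary identification chosen so that the resulting lamp configurations are consistent. The dimension mismatches that arise are absorbed into a small exceptional subspace, on which we set $\tilde A=1$ and take $\tilde T$ to be any unitary. Then $\tilde T^{-i}\tilde A\tilde T^{i}$ is diagonal in one fixed basis for every $i$, so the relations hold exactly. The distances $\|A-\tilde A\|_{\HS}$ and $\|T-\tilde T\|_{\HS}$ are bounded by the sum of three contributions: $O(M\eps)$ from Step 1, the total-variation defects (polar decomposition costs a square root, giving $O(\sqrt{M\eps})$ per level), and the tower boundary measure $O(1/L)$. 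The choice $\eps=c\kappa^7/M^2$ makes all of these at most $\kappa$.
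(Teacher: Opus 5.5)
Your outline follows the paper's architecture closely: stability of $(\Z/2)^k$, an approximately shift-equivariant PVM on patterns, LOCAL-algorithm markers, and rounding of approximate projection towers by rank equalisation and polar decomposition. However, the step you flag as the main obstacle is exactly where an idea is missing, and the closing rule of Step~3 is not valid as stated.

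You close every tower at its top by an arbitrary identification and charge only a boundary cost $O(1/L)$. That works for towers of height at least $L$, whose top level carries roughly a $1/L$ fraction of the tower. For a tower of small height $p$, the top level carries a $1/p$ fraction. If $A=1$, all the mass sits on the all-zero pattern, which is a single tower of height one, and an arbitrary closing produces a $\widetilde T$ unrelated to $T$. Short towers must therefore be closed by the polar part of $P_{p-1}TP_0$. This is close to $T$ only if $\|T^*P_{p-1}T-P_0\|_{\HS}^2$ is small, summably over all short towers, i.e.\ only if the clopen towers are $\delta$-closed: $\mu(L^pb\cap b)\geq(1-\delta)\mu(b)$.

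Your assertion that short-period patterns ``form their own cycles'' is precisely this, and it is not automatic. Periodicity on a window can break just outside it: for $\mu$ uniform on the orbit of the periodic point with period word $0^{2\ell+1}1$ and $b=\cyl{0^{F_\ell}}$, one has $\mu(b)>0$ but $\mu(Lb\cap b)=0$.

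The paper supplies this by scale selection (Claim~\ref{claim:62}). For $x\in\{0,1\}^{F_{3t}}$ of minimal period $p\leq t$, it examines the cylinders $\cyl{x^{3t+ip}}$ of the minimal periodic extension. Since $\cyl{x^{r+ip}}\subseteq L^p\cyl{x^{r+(i-1)p}}\cap\cyl{x^{r+(i-1)p}}$, if none of them were $\delta$-closed their masses would decay like $(1-\delta)^i$. So either some scale $O(t\log(1/\upsilon)/\delta)$ yields a $\delta$-closed base, or the cylinder is discarded into an error set of total mass at most $\upsilon$. Choosing one representative per shift class and using Lemma~\ref{lem:per_ext_properties}(iii) makes these towers disjoint. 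Lemma~\ref{lem:approx_inv_measures} then converts $\delta$-closedness plus approximate invariance into $\sum_\tau\delta_2^\tau=O(\delta+t^2\eps'')$, which is the input Lemma~\ref{lem:lowd} needs. This height-versus-closedness dichotomy (Proposition~\ref{prop:decomp}(ii)) is what your plan lacks.

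Two further steps are glossed over.

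First, the interface between the regimes. The window colouring is $F_t$-proper only on the approximately aperiodic region, so the markers live in $\Xi_{\aper}$. The return-time towers built on them can run into the periodic towers, since the translates $L^kZ$, $|k|\leq 2t$, need not stay in $\Xi_{\aper}$, and this overlap is not a $1/L$ boundary term. The paper trims the markers to $W=L^{-t}\bigcap_{i\in F_t}(Z\cap L^{-i}\Xi_{\aper})$. It bounds the loss through $\mu(L^{k}Z\setminus\Xi_{\aper})=O(t^4\eta+t^3\delta+\upsilon)$ (Lemma~\ref{lem:complement-bound}), which again uses that the periodic towers are $\delta$-closed. The resulting $t^6$ factor, not the boundary term, forces $\delta\sim\kappa^{14}$ and hence $M$ of order $\kappa^{-20}\log(2/\kappa)$.

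Second, for the relations to hold exactly, $\widetilde A$ must be scalar on each tower level. Otherwise the polar parts mix patterns with different $0$-th coordinate, and $\widetilde T^{-i}\widetilde A\widetilde T^{i}$ is not diagonal in a common basis. The paper refines bases until $\pi_j(b)$ is a singleton (Proposition~\ref{prop:decomp}(iii) and Claim~\ref{claim:b-B}).

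Finally, the error bookkeeping must be operator-level and global. Step~3 needs $\sum_x\|T^*E_{\cyl x}T-E_{L\cyl x}\|_{\HS}^2=O(M^4\eps^2)$ (Lemma~\ref{lem:mu-invariance}). This is stronger than total-variation closeness of $\mu$, not equivalent to it, and it must be summed over all levels of all towers via Lemma~\ref{lem:p-ortho} rather than bounded per level. Since rounding a tower of height $j$ costs $O(j^2\delta_1^\tau)$, heights must stay $O(t)$ rather than range up to $M$.
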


%

In words: For unitaries $A,T$ to be $\kappa$-close to a true representation of the lamplighter, only polynomially many relations in $A,T$ need to be checked with polynomial precision.
To the best of our knowledge, this yields the first example beyond the realm of finite groups \cite{GH, delaSalle, CVY} and  abelian groups \cite{Glebsky, becker2021abelian}  for which quantitative stability has been established.\footnote{For surface groups, a linear bound on \emph{permutation stability} is known \cite{lazarovich2024surface}, while Hilbert--Schmidt stability is still open. There is also a very different regime of \emph{uniform stability}, for which much more quantitative results are known \cite{DOT}.}
Stability of the lamplighter, with no quantitative bounds, was first proved in \cite{LL1, levit2023characters}. It provided the first example of an infinitely presented stable group.

In recent years, quantitative stability played an important role in quantum complexity theory and the study of error-correcting codes, in particular due to its relation with the study of \emph{self-testing} in the context of non-local games \cite{VidickICM}.
This is evidenced in part by efficient stability results for the multi-qubit Pauli group, such as the Pauli basis test and the low individual degree test~\cite{ji2020quantum, VidickICM, delaSalle, CVY, vidick2022almost}, which play a crucial role in the recent result  \textsc{MIP$^*$=RE} \cite{MIP*=RE}.
Yet, results on quantitative stability remain scarce and challenging to establish, especially in the realm of infinite groups.

A general framework of quantitative stability for finitely generated groups $\Gamma = \langle S \rangle$ was developed in \cite{dogon2024characters}, extending the work of Becker and Moshieff on finitely presented groups \cite{becker2021abelian}.
This is captured by the notions of \emph{stability radius growth} $\SRad_\Gamma$ (Definition \ref{def:SRad}) and \emph{stability rate} $\SRate_\Gamma^{S,r}$ (Definition \ref{def:local_global_defect}), which roughly measure the amount of group relations and precision they need to be tested on to guarantee proximity to a true homomorphism.
The following is our main result formulated in these terms.

\begin{theorem}\label{thm:rate-radius-lamplighter}
    Let $\Gamma = \Z/2 \wr \Z$ be the lamplighter group. Then
$\SRad_\Gamma(r) \preceq r^{21}$. Moreover, for every $0<\kappa$, if
\[
    r_\kappa=\left\lceil C\kappa^{-21}\right\rceil,
\]
and $S=\{a,t\}$ is the standard generating set, then 
\[
    \SRate_{\Gamma}^{S,r_\kappa}(\kappa) \geq c\,\kappa^{70}
\]
for universal constants $C,c>0$. 
\end{theorem}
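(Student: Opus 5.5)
The plan is to derive Theorem~\ref{thm:rate-radius-lamplighter} from Theorem~\ref{thm:main}, translating that concrete statement into the language of Definitions~\ref{def:SRad} and~\ref{def:local_global_defect}. Recall the two notions. The stability radius $\SRad_\Gamma(r)$ asks: for how large a ball of relators $R$ must approximate satisfaction of $R$ be checked so that solutions are close (within $1/r$, say) to genuine representations? The stability rate $\SRate^{S,r}_\Gamma(\kappa)$ measures how small the local defect on radius-$r$ relations must be, as a function of the target distance $\kappa$.

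\textbf{Step 1: the radius.} Given $\kappa$, apply Theorem~\ref{thm:main} with $M=\lceil C\kappa^{-21}\rceil$. The relations checked are $a^2$ and $[a,t^{-i}at^{i}]$ for $0\le i\le 2M$. Each is a word of length $O(M)$ in $S=\{a,t\}$, hence lies in the ball of radius $O(M)$ of the free group. Inverting so that $\kappa\sim 1/r$ gives the required radius $O(r^{21})$, so $\SRad_\Gamma(r)\preceq r^{21}$. Since $\preceq$ absorbs constant rescalings of the argument, the factor $2$ and the word-length constants are harmless.

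\textbf{Step 2: the rate.} Rescale constants so that the radius in the definition is $r_\kappa$ and still dominates the word lengths $4i+4$ for $i\le 2M$; this adjusts $C$. The local defect is then a sum or maximum of $\|w(A,T)-1\|_{\HS}$ over relators $w$ in the ball. A defect below $\eps$ implies each of the specific relator bounds in Theorem~\ref{thm:main}; note that $\|[A,B]\|_{\HS}=\|AB-BA\|_{\HS}=\|ABA^{-1}B^{-1}-1\|_{\HS}$ by unitarity. Theorem~\ref{thm:main} then supplies a genuine representation within $\kappa$ on each generator. The threshold obtained is
\[
\eps=c\kappa^7/M^2\asymp\kappa^{7}\cdot\kappa^{42}=\kappa^{49},
\]
which is already better than the claimed $\kappa^{70}$.

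\textbf{Main obstacle: matching the definitions.} The difficulty is that the precise definitions may introduce further losses. Two possibilities are a normalization summing defects over all relators in the ball rather than taking a maximum, and a requirement on the distance measured in a different norm. A sum over roughly $r^{O(1)}$ words only affects things if averaged; with a sum, each term is at most the total, so no loss occurs. The ball may instead contain many relators beyond our chosen family, in which case we simply ignore them.

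The likely source of the exponent $70$ is that the rate is defined in terms of the defect of a map from the whole ball, or with a squared norm, or with $\kappa$ replaced by a polynomially related quantity. For example, closeness of all words of length $\le r$ could follow from generator closeness with a loss factor $r$, and so might require $\kappa/r$. Under that reading one applies Theorem~\ref{thm:main} with $\kappa'=\kappa^{a}$ for the appropriate $a$ and tracks the exponents. I expect this bookkeeping, rather than any new mathematics, to be the only real work; all the mathematical content resides in Theorem~\ref{thm:main}.
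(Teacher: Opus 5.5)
There is a genuine gap in Step~2, and it is exactly where the exponent $70$ comes from. In Definition~\ref{def:local_global_defect}, the local defect is not the value of relator words at the images of the generators. It is the multiplicativity defect $L_{S,r}(\varphi)=\max_{g,h,gh\in B_S(r)}\|\varphi(gh)-\varphi(g)\varphi(h)\|_{\HS}$ of a map defined on the ball $B_S(r)$ of $\Gamma$ itself, where the relators $[a,t^{-i}at^{i}]$ are just the identity. So your sentence ``a defect below $\eps$ implies each of the specific relator bounds in Theorem~\ref{thm:main}'' does not hold as stated. You need a translation lemma, which is Lemma~\ref{lem:gamma-rep} in the paper:
\begin{itemize}
\item Set $T=\varphi(t)$. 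The defect gives $\|\varphi(a)^2-1\|_{\HS}=O(\delta)$ (or you can round $\varphi(a)$ to an involution $A$).
\item Telescoping gives $\|\varphi(t^k)-T^k\|_{\HS}\le|k|\delta$.
\item Since $a$ and $t^{k}at^{-k}$ commute in $\Gamma$, a bounded number of further applications of the defect bound yields $\|[A,T^{-k}AT^{k}]\|_{\HS}=O(r\delta)$ for $|k|\le 2M$. This is legitimate once $t^kat^{-k}a\in B_S(r)$, i.e.\ $r\ge 2|k|+2$, so a radius $O(M)$ suffices (the paper takes $r_\kappa=8M_\kappa+4$).
\end{itemize}
The telescoping step costs a factor $r\asymp M$. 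To reach the hypothesis level $\eps=c\kappa^7/M^2$ of Theorem~\ref{thm:main} you must therefore take $\delta\asymp\kappa^7/M^3\asymp\kappa^{7+63}=\kappa^{70}$. The paper in fact obtains $\kappa^{67}/\log(2/\kappa)^3$ with $M\asymp\kappa^{-20}\log(2/\kappa)$. Your threshold $\kappa^{49}$ rests on the misreading of the definition and is not justified.

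Your suggested remedy also looks in the wrong place. The global defect $G_S(\varphi)$ compares $\varphi$ with a representation only on the generators $a,t$, so there is no loss on the output side. There is no reason to run Theorem~\ref{thm:main} with $\kappa/r$ or $\kappa^{a}$, and doing so would be circular: the radius $M$ required by Theorem~\ref{thm:main} grows like a power of $1/\kappa'$, and hence with $r$. The loss must instead be absorbed on the input side, by shrinking the admissible defect $\delta$ by a factor of $M$. Meanwhile $M$ and the target error stay fixed; the target can be, say, $\kappa/3$, which leaves room for $\|\varphi(a)-A\|_{\HS}\le\delta$. With that lemma in place, your Step~1 conclusion $\SRad_\Gamma(r)\preceq r^{21}$ is fine, since it only needs positivity of the rate at radius $O(M)$.
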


%
While the quantitative bounds stated in Theorem~\ref{thm:rate-radius-lamplighter} are undoubtedly far from optimal, in Appendix~\ref{sec:linear-lower-bound} we show that the stability radius must be at least linear. 
Further, the lamplighter provides the first infinitely presented group with an explicit upper bound on the stability radius growth, answering \cite[Question 12.10]{dogon2024characters}.
Finally, let us remark that the proof works equally well for lamplighters $A \wr \mathbb Z$, where $A$ is a finite abelian group. 
We choose to work with the classical lamplighter for notational simplicity.

\subsection*{On the proof}

While quantitative results are rare, a variety of groups have been shown to be Hilbert--Schmidt stable, including all virtually nilpotent groups \cite{levit2023characters, ES}, finitely generated metabelian groups of the form $N \rtimes \mathbb Z$ \cite{yaari2025density} and diagonal products \cite{dogon2024characters}.
The reason for this contrast is that for proving \emph{qualitative} Hilbert--Schmidt stability, a useful criterion of Hadwin and Shulman \cite{HS_grp} reduces the problem to studying the \emph{character theory}:
$\Gamma$ is Hilbert--Schmidt stable if and only if every character of $\Gamma$ is the limit of a sequence of finite dimensional traces.
This allows one to employ techniques from representation theory, von Neumann algebras and ergodic theory, making the analysis significantly easier.
Unfortunately, the criterion crucially uses a compactness argument, by taking an ultraproduct of a sequence of approximate representations (see \cite{Dog_Thesis}).
This process loses quantitative information regarding the defect of the asymptotic homomorphism, making it unsuitable for quantitative stability.

We overcome this difficulty by working directly with approximate representations of a semidirect product  group. 
This leads to new notions of \emph{approximately invariant measures} for dynamical systems and techniques to analyze them, which are of independent interest.

\paragraph{Approximately equivariant projection-valued measures.}

In general, for a semidirect product $\Gamma = N \rtimes G$, with $N$ abelian, unitary representations $\pi: \Gamma \to B(\mathcal H)$ correspond via the Fourier transform on $N$ to $G$-equivariant projection-valued measures (abbreviated PVM) $E$ on the dual action $G \curvearrowright \widehat{N}$  (see \cite[Proposition 6.36]{Folland}).
This forms the basis for the Mackey machine and full understanding of representation theory in many cases \cite[Section 6.6]{Folland}.

 Denote by $X = \{0,1\}^\mathbb Z$ the Cantor space and let $\alpha: X \to X$ be a homeomorphism. For $M \in \mathbb N$ and $b \in \{0,1\}^{[-M,M]}$, we let $\cyl{b} \subset X$ denote the cylinder set defined by $b$.
Given a finite-dimensional Hilbert space $\cH$ let $U(\cH)$ and $\Proj(\cH)$ be the unitaries and projections on $\cH$ respectively.

\begin{definition}[cf. Definition \ref{def:approx_equiv_PVM}]\label{def:approx_equiv_intro}
For $M \in \N, \eta>0$ and $T \in U(\mathcal H)$, a projection-valued measure $E$ on $X$ with values in $\Proj(\mathcal H)$ is said to be \emph{\textbf{$(M,\eta)$-equivariant with respect to $T$}} if
    $$\sum_{x \in \{ 0,1\}^{F_M}} \|E_{\alpha\cyl{x}} - T^*E_{\cyl{x}}T \|_{\HS}^2 \leq \eta.$$
Similarly, a probability measure $\mu$ on $X$ is \textbf{\emph{$(M,\eta)$-invariant}} if 
    $\sum_{x \in \{ 0,1\}^{F_M}} |\mu(\alpha\cyl{x}) - \mu(\cyl{x}) | \leq \eta.$
\end{definition}

Inspired by the Mackey machine, we  show that an approximate representation of the lamplighter $\mathbb Z/2 \wr \mathbb Z = \bigoplus_\mathbb Z\mathbb Z/2 \rtimes \mathbb Z$ gives rise to an approximately equivariant PVM $E$ for the \emph{full shift} map $L:  X \to X$, with control on $M$ and $\eta$ (Section \ref{sec:rep_to_meas_main_proof}).
This in turn gives rise to a $(M,\eta)$-invariant measure $\mu(\cyl{x}) = \tr(E_{\cyl{x}})$ on $X$.
To do so, we use quantitative stability of the finite abelian groups $(\mathbb Z/2)^M$ (Lemma \ref{lem:ab-close}), together with the Fourier transform over them.
We expect this correspondence to hold for more general semidirect products.

\paragraph{Approximate projection towers.}

 Shifting our attention to approximately equivariant PVMs E, we first deal with (approximate) \emph{projection towers}.
These arise from restricting $E$ to \emph{dynamical towers} in the system $L:X \to X$:
Under suitable assumptions  (see Section \ref{sec:tow_to_rep_main_proof}), 
if $b \subset X$ is a clopen set with $b, Lb, \dots L^{j-1}b$ pairwise disjoint for some \emph{height} $j$, then the projections $E_b, E_{Lb}, \dots E_{L^{j-1}b}$ are pairwise orthogonal and  satisfy the following.

\begin{definition}\label{def:proj_tower_intro}[cf. Definition \ref{def:proj-tower}]
    A tuple $\tau = (P_0,\ldots,P_{j-1};R)$ is a \emph{\textbf{closed projection tower}}  if $P_0,\ldots,P_{j-1}$ are pairwise orthogonal projections and $R$ is a unitary such that $R^*P_i R = P_{i+1\text{(mod $j$)}}$.
    It is an \textbf{\emph{approximate $(\delta_1,\delta_2)$-closed projection tower of height $j$}} if
    $$\sum_{i=0}^{j-2} \| R^*  P_i R  - P_{i+1}\|_{\HS}^2  \leq \delta_1\;, \qquad
     \| R^*  P_{j-1} R  - P_{0}\|_{\HS}^2 \leq \delta_2\notag\;.$$
\end{definition}

We show that closed projection towers naturally give rise to representations of semidirect products (see Lemma \ref{lem:tower_to_rep} and Remark \ref{rem:tower_to_rep}).
We then directly prove an effective stability lemma for approximate projection towers (Lemma \ref{lem:lowd}):
it is shown that approximate projection towers are close to closed projection towers, depending on how small $\delta_1,\delta_2$ are.
Therefore, it remains to find a suitable decomposition of $X$ into dynamical towers to correct the PVM globally.

\paragraph{Approximately invariant measures and polynomial tower decomposition.}
In order to correct an approximately equivariant PVM on all of $X$, we employ ideas from dynamics and descriptive combinatorics to construct a \emph{polynomial} Kakutani-Rokhlin tower decomposition for approximately invariant measures on the full shift (Proposition \ref{prop:decomp}).
This is the core of our proof.
Below we give an informal statement, and refer to Proposition \ref{prop:decomp} for a precise formulation and Section \ref{sec:Ber} for our notions of complexity of clopen sets, towers, and $\delta$-closedness.

\begin{proposition}[Tower decomposition, informal]\label{prop:into_decomp}
Let $t\in\mathbb{N}$ and let 
$0<\upsilon, \delta, \eta$ be parameters, there exists $M = Poly(t,1/\nu, 1/\delta)$ such that the following holds:
For any probability measure $\mu$ on $X = \{0,1\}^{\mathbb{Z}}$ which is $(M,\eta)$-invariant, there exists a clopen subset $e\subseteq X$ of complexity $M$ with $\mu(e) = Poly(t,\nu,\delta,\eta)$ and a partition of $X\backslash e$ into a finite collection $\mathscr T$ of disjoint clopen towers such that:
\begin{enumerate}
\item  \textbf{(Complexity upper bound)} Each  tower in $\mathscr T$ is of complexity at most $M$
\item \textbf{(Height versus closedness dichotomy)} Each tower appearing in $\mathscr T$ is either $\delta$-closed or of height $t\leq j < 6t+1$.
\item \textbf{(Complexity lower bound)} For each tower $b,Lb, \dots, L^{j-1}b$ in $\mathscr{T}$, the complexity of $b$ is at least $\max(j,t)$.
\end{enumerate}
\end{proposition}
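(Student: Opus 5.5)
The plan is to build the decomposition from points of $X=\{0,1\}^{\Z}$, in the spirit of Kakutani--Rokhlin constructions, using a local (LOCAL-algorithm style) rule. Every set produced will be a union of cylinders on a window of size $\mathrm{Poly}(t,1/\nu,1/\delta)$, which gives the complexity upper bound. We treat periodic and aperiodic behaviour separately.

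\textbf{Step 1: periodic versus aperiodic split.} Call $x$ \emph{locally $p$-periodic} if $x_{k+p}=x_k$ for all $k$ in a window $[-W,W]$, with $W$ polynomial in $t,1/\delta$. For each minimal period $p<t$, take a base set $b_p$ consisting of the points whose window is $p$-periodic and which equal a canonical (lexicographically least) rotation of their period block. Then $b_p,Lb_p,\dots,L^{p-1}b_p$ are pairwise disjoint, and $L^pb_p$ agrees with $b_p$ except at the edges of the window. The symmetric difference between $L^p b_p$ and $b_p$ is governed by the points where local periodicity breaks at distance about $W$. Using $(M,\eta)$-invariance together with a counting (pigeonhole) argument over the window size, we choose $W$ so that this boundary mass is at most $\delta$; this makes these towers $\delta$-closed. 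Points that are locally periodic only with period between $t$ and roughly $6t$ go to the aperiodic part. Note that the base of a period-$p$ tower must be specified by a cylinder of length at least $\max(p,t)$. We enlarge the defining window to length at least $t$, which gives the complexity lower bound.

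\textbf{Step 2: aperiodic part via a maximal $t$-separated set.} On the remaining points, no pattern repeats with period less than $t$ within the window. Here we use the standard descriptive-combinatorics construction of a clopen marker set $S$ whose visits along each orbit are at least $t$ apart, with gaps below $2t$ where defined. We get $S$ by a deterministic local rule: take the lexicographic minima of windows of length about $3t$, followed by a Cole--Vishkin / Linial type greedy symmetry-breaking step that runs for $O(\log^* )$ or polynomially many rounds. Aperiodicity ensures that all length-$t$ blocks in a window are distinct, so the rule is well defined. The towers are then the return-time sets $b_j=\{x\in S: \text{first return to } S \text{ at time } j\}$ with $t\le j<6t+1$. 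Complexity is at least $j$ because the return time must be readable from the base.

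\textbf{Step 3: the error set $e$.} The set $e$ collects three kinds of points: those whose window is too short to decide their type (near a transition between periodic and aperiodic regimes), points in orbits where the marker gap exceeds $6t$, and points that would lie in an incomplete tower. Each of these is a clopen condition, and for a genuinely invariant measure it has measure controlled by $\nu$ through the density of boundary configurations. For approximately invariant $\mu$ we transport this estimate: each bad event is detected by a translate of a cylinder, and summing the defect $\eta$ over the polynomially many shifts involved gives $\mu(e)\le \mathrm{Poly}(t,\nu,\delta,\eta)$.

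\textbf{Main obstacle.} The hard part is the interface between periodic regions of period just above $t$ and the aperiodic regime. The marker rule must still give gaps in $[t,6t]$ there, and the points where periodicity breaks must have small measure. With only approximate invariance we cannot use ergodic decomposition or Rokhlin's lemma directly. My first approach would be a pigeonhole argument over $\mathrm{Poly}(1/\nu)$ candidate window lengths, picking one whose boundary shell has $\mu$-mass at most $\nu$, and then using $(M,\eta)$-invariance to propagate this shell estimate along the tower levels.
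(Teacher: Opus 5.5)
Your outline has the same architecture as the paper's proof of Proposition~\ref{prop:decomp}: periodic towers by growing a window until closedness, markers on the rest from an $F_t$-proper window colouring via Linial reduction and greedy, return-time towers, and an error bound via approximate invariance. However, the step that carries the proof, Step~3, is only asserted, and the reason you give for it is not the right one.

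Markers are placed only on the complement $\Xi_{\aper}$ of the periodic towers. So return times fail exactly for aperiodic points within distance $O(t)$ of the periodic region, and nothing about ``density of boundary configurations'' or $\nu$ makes that set small. What does is the $\delta$-closedness of the periodic towers. By Lemma~\ref{lem:approx_inv_measures}(ii), $\sum_{b}\mu(L^{qj}b\triangle b)=O(|q|\delta\,\mu(\bigcup b)+jq^2\eta)$ over the bases of height-$j$ towers. Write $i-k=q_ij+s_i$ and use $Z\cap L^{s_i}b=\emptyset$; this gives $\mu(L^{k}Z\setminus L^{k'}\Xi_{\aper})=O(t^4\eta+t^3\delta+\upsilon)$ for $|k|,|k'|\le 2t$ (Lemma~\ref{lem:complement-bound}). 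One then keeps only markers whose entire $F_t$-neighbourhood lies in $\Xi_{\aper}$. This yields complete towers with heights in $[t+1,2t+1]$, and a union bound over $O(t^2)$ pairs of shifts bounds the uncovered part by $O(t^6(\eta+\delta+\upsilon))$. Your last paragraph gestures at this, but it has to be carried out, and it is the one place where approximate invariance is genuinely needed.

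Conversely, the obstacle you single out, local periods just above $t$, is not one. On $X^\ell_{\aper}(t)$ the map $\pi_\ell$ is an $F_t$-proper colouring whatever the longer periods, so the marker lemma already gives gaps in $[t+1,2t+1]$ there.

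A few further corrections. Property (iii) means that $\pi_{\max(j,t)}(b)$ is a singleton, i.e.\ $b$ lies in a single cylinder. This is what Claim~\ref{claim:b-B} needs so that $B_0$ is scalar on every level; it does not mean that $b$ cannot be described by a shorter window. Your aggregated base $b_p$ (all period-$p$ patterns with one common $W$) violates it. Splitting $b_p$ by pattern afterwards preserves only the \emph{average} closedness, not $\delta$-closedness of each tower.

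The paper therefore grows the window separately for each pattern $x\in\{0,1\}^{F_{3t}}$, using a multiplicative dichotomy (Claim~\ref{claim:62}). Either $\mu(\cyl{x^{k+t'}})<(1-\delta)\mu(\cyl{x^{k}})$ for $\Theta(\log(1/\upsilon)/\delta)$ consecutive steps, so the cylinder has mass at most $\upsilon\mu(\cyl{x})$ and goes into the error set. Or some level is $\delta$-closed relative to its own mass. This uses only $\cyl{x^{k+t'}}\subseteq L^{t'}\cyl{x^{k}}\cap\cyl{x^{k}}$, not invariance. An additive pigeonhole that bounds a shell by $\nu$ gives no relative closedness for light patterns.

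Finally, aperiodicity does not make all length-$t$ blocks of the window distinct. What holds, by the definition of $X^\ell_{\aper}(t)$, and what Linial's algorithm needs, is $\pi_\ell(x)\neq\pi_\ell(L^ix)$ for $0<|i|\le t$.
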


Unlike classical considerations, where the homeomorphism is assumed to be aperiodic, our decomposition accounts for approximately periodic points (see Section \ref{subsec:per}), and has the further advantage of the decomposition consisting only of clopen sets which are of polynomial complexity in terms of the desired height.
The presence of periodic points in our arguments can be traced back to the \emph{dense periodic measures} property of the full shift \cite{levit2023characters}, which is responsible for qualitative Hilbert-Schmidt stability of the lamplighter.

The proof of Proposition \ref{prop:into_decomp} first proceeds by covering the clopen set of \emph{approximately periodic points} (Definition \ref{def:approx_per}) by $\delta$-closed towers of polynomial complexity.
This is done in Section \ref{subsec:per} and relies on the simple yet crucial fact that finite bitstrings can be completed to periodic points in $X$, which can be interpreted as an ``orbit closing lemma'' \cite{Par}.
It is interesting to point out that this step does not use the assumption regarding approximate invariance of the measure $\mu$, but rather purely symbolic dynamical properties of the shift $L:X \to X$.

To deal with the approximately aperiodic part of $X$, we adapt the technique of constructing \emph{marker sets}, a notion appearing in ergodic theory, topological and Borel dynamics and descriptive combinatorics  \cite{GJ, schneider2013locally, GJKS}.
Roughly speaking, these are subsets of $Z\subset X$ which saturate $X$ and have prescribed return time under the dynamics (Section \ref{sec:marker-efficient}), making them useful for construing Kakutani-Rokhlin towers.

To obtain polynomial bounds, we prove an \emph{efficient} set construction (see Proposition \ref{prop:marker_general}) for general group actions on $0$-dimensional spaces, due to P. Naryshkin \cite{Petr_arg}.
The main idea is to make use of Linial's distributed LOCAL color-reduction algorithm \cite{Linial}, together with a recent result of Bernshteyn \cite{Bernshteyn_inv} to yield a highly local and efficient coloring procedure for Schreier graphs of group actions.
Combined with a standard maximal independent set greedy algorithm, we obtain the desired marker set $Z$ with polynomial complexity.

 Finally, we use $Z$ to construct towers of height $\geq t$ via the classical Kakutani-Rokhlin return time procedure, and use approximate invariance of $\mu$ to bound the size of the error set $e$ missed by the towers (Section \ref{subsec:aper}). 

\subsection*{Organization of the paper}
We start with some preliminaries and notation regarding the full shift and representations of the lamplighter group in Section~\ref{sec:prelim}. In Section~\ref{sec:proj-towers} we introduce our notion of approximately equivariant PVM and show a stability result for them (Lemma~\ref{lem:lowd}). In Section~\ref{sec:proof-main} we give the proof of the main result, assuming the main underlying dynamical result (Proposition~\ref{prop:decomp}), which is proved in Section~\ref{sec:decomp}. Section~\ref{sec:marker-efficient} contains the construction of marker sets on which the dynamical results in Section~\ref{sec:decomp} rely. Finally, in Section~\ref{sec:radius} we derive our bounds on the stability rate and radius of the lamplighter, and in Appendix~\ref{sec:linear-lower-bound} we show a linear lower bound on on the stability radius.

\subsection*{Formalization} 
After the paper was complete, the authors used Aristotle~\cite{achim2025aristotle} to auto-formalize in Lean the main statement of the paper, Theorem~\ref{thm:main}. This was completed over 14 (partial) days and 64 Aristotle prompts, each taking between 30 minutes and 4h to complete (the formalization includes all background material not already in Mathlib, such as Lemma~\ref{lem:ab-close}, which is taken from~\cite{CVY}, and Lemma~\ref{lem:loca_alg_col_red}, which is taken from~\cite{Linial}). The complete formalization is available at~\cite{Lean-proof}. A comparator file is available at~\cite{Lean-compare}, which allows easy verification based only on the Lean formalization of the statement of Theorem 1.1.

\subsection*{Acknowledgments}
We would like to thank Itamar Vigdorovich, Arie Levit, Omri Sarig and Tali Kaufman for helpful conversations, and Michael Chapman for comments on the draft.
The first author would like to thank Brandon Seward and Petr Naryshkin who introduced him to Borel dynamics and LOCAL algorithms. He would also like to thank Oren Becker for introducing him to quantitative stability.
AD was supported by a Clore Scholars grant and ERC grant no. 882751. TV was supported by the Swiss
State Secretariat for Education, Research and Innovation (SERI). 

\textbf{A.I. disclosure}
The authors originally achieved an exponential bound on the stability of the lamplighter, with a greedy procedure to construct marker sets for $\mathbb Z$-actions. 
Upon input of the marker lemma to ChatGPT 5.5, a polynomial improvement was achieved.
The authors then observed that the polynomial marker lemma follows from known techniques in descriptive combinatorics and holds for general group actions. The LLM also provided the statement and proof of the lower bound described in Appendix~\ref{sec:linear-lower-bound}. The proof was edited and verified by the authors. 
Besides this input, the paper was written solely by the authors.


\section{Preliminaries}
\label{sec:prelim}

\subsection*{Standing notations}\label{subsec:notation} 
\label{notations}

\begin{itemize}
\item Throughout, $\mathcal{H}$ is a finite-dimensional Hilbert space. 
    \item  We denote the commutator of two operators $A,B$ by $[A,B]= AB-BA$.
    \item  We adopt the standard big O notation, where unless otherwise noted, $x = O(y)$ means $x \leq C\cdot y$ for a universal constant $C$ and $x,y$ are numbers or functions. Similarly we consider the standard $\Omega, \Theta$ notations.
    \item  Let $f,g: (0,\infty) \to (0,\infty)$ be a pair of  monotone non-decreasing functions (similarly for $f,g: \N \to (0,\infty)$).
    \begin{itemize}
        \item We write $f \preceq g$ if there is a constant $C \ge 1$ such that  $f(x) \leq Cg(Cx)$ for all $x>0$. 
        \item We write $f \approx g$  if $f \preceq g$  and $g \preceq f$. In that case  $f$ and $g$ are said to be \emph{equivalent}.
    \end{itemize}
\end{itemize}

\subsection{Approximately invariant measures}
\label{sec:Ber}

We now introduce notations regarding the full shift on $\Z$, which will be fixed throughout the paper.
Let $X = \{0,1\}^\mathbb Z$, and let $L : X \to X$ be the \emph{right shift} defined by:
$$ (Lx)_i = x_{i-1},  \;\; x \in X, i \in \Z.$$
We will refer to $X$ with the $\mathbb Z$ action induced by $L$ as the \emph{\textbf{full shift}} (as is common in topological dynamics), or the \textbf{\emph{(topological) Bernoulli shift}}.

Even though we will only be interested in the full shift, the following definitions make sense for general homeomorphisms of the Cantor space $X$.
For the rest of this subsection, we will fix $\alpha: X \to X$ a general homeomorphism of $X$, which can be taken to be $L$ for simplicity\footnote{In fact, the definitions extend naturally to the setting of a finitely generated group action on compact $0$-dimensional spaces with equipped with continuous colorings. In particular, $X$ can be taken to be $A^\mathbb Z$ for any finite alphabet $A$. We will not need approximately invariant measures in the general setting, though this approach is partially undertaken in Section \ref{sec:marker-efficient}.}.

We equip $X$ with the standard topology induced by \emph{cylinder sets}: For $n \in \mathbb{N}$, let $F_n = [-n,n] \subset \Z$ and consider the projection map $\pi_n: X \to \{0,1\}^{F_n}$ defined by restricting to the coordinates of $x$ supported on $F_n \subset \mathbb Z$.
For $b \in \{0,1\}^{F_n}$, denote by $\cyl{b} = \pi_n^{-1}(b) \subset X$. We will refer to $\cyl{b}$ as a \textbf{\emph{cylinder set}}.
Similarly, if $B \subset \{0,1\}^{F_n}$, we will denote $\cyl{B} = \pi_n^{-1}(B)$. A set of the form $\cyl{B}$ for some $B \subset \{0,1\}^{F_n}$ will be said to be \textbf{\emph{$n$-defined}}.
By Tychonoff's theorem, the space $X$ equipped with the topology generated by cylinder sets is compact, and the collection of clopen subsets coincides with the union of $n$-defined sets for all $n$.
Throughout the paper, $n$-definability measures the complexity of clopen subsets in $X$.

For $t \in \N$, a point $x \in X$ is \textbf{\emph{$t$-periodic}} if $\alpha^t(x) = x$. We will denote by $X_{\per}(t) \subset X$ the set of points which have period at most $t$. Note that $X_{\per}(t)$ is closed, and set $X_{\aper}(t)  = X \setminus X_{\per}(t)$. 

A \textbf{(clopen) \emph{tower} with base $\boldsymbol{b}$ and height $\boldsymbol{j}$}, denoted by $\Tow(b,j)$, is a disjoint union of the form $\Tow(b,j) =  \sqcup _{i=0}^{j-1} \alpha^{i}(b)$, where $b \subset X$ is a clopen subset and $j \in \N$.
We say the tower $\Tow(b,j)$ is \emph{\textbf{$n$-defined}} if all the sets $b, \alpha(b), \dots , \alpha^{j-1}(b)$ are $n$-defined.
Furthermore, we say the tower $\Tow(b,j)$ is \textbf{$\delta$-closed} if $\mu(\alpha^j(b) \cap b) \geq (1-\delta)\mu(b)$.
A subset $b \subset X$ is said to be \emph{\textbf{$F_j$-independent}} if $b \cap \alpha^i(b) = \emptyset$ for all $i \in F_j \setminus \{0\}$. 
Observe that $b \subset X$ is $F_{j-1}$-independent if and only if $b$ is the base of a tower of height $j$.

As mentioned in the introduction, a crucial notion in our techniques is that of \emph{approximately invariant measures} on the full shift, which we now define formally:
\begin{definition}[Approximately invariant measure]\label{def:approx_inv_meas}
Let $\alpha:X\to X$ be a homeomorphism, $M \in \N, \eta>0$. A probability measure $\mu$ on $X = \{0,1\}^\Z$ is said to be \emph{\textbf{$(M,\eta)$-invariant}} if
\begin{equation*} 
    \sum_{x \in \{ 0,1\}^{F_M}} |\mu(\alpha\cyl{x}) - \mu(\cyl{x}) | \leq \eta.
\end{equation*}
In other words, the marginals of $\mu$ and $\alpha_* \mu$ on $\{0,1\}^{F_M}$ are $\eta$-close in total variation.
\end{definition}

Similarly, we can also define \emph{approximately equivariant projection valued measures (PVM)}\footnote{Alternatively, the term \emph{approximate system of imprimitivity} may also be used. It is also natural to consider the same notion for general group actions.} as follows. Given a Hilbert space $\mathcal H$, we denote by $\Proj(\mathcal H)$ the set of projections on $\mathcal H$.
Recall a \emph{projection valued measure $E$ on $X$ with values in $\Proj(\mathcal H)$} is an assignment of a projection $E_b \in \Proj(\mathcal H)$ for each Borel subset $b \subset X$ which is countably additive.

\begin{definition}[Approximately equivariant PVM]\label{def:approx_equiv_PVM}
Let $\alpha: X \to X$ be a homeomorphism, $M \in \N, \eta>0$, $\mathcal H$ a Hilbert space and $T \in U(\mathcal H)$. A projection valued measure $E$ on $X$ with values in $\Proj(\mathcal H)$ is said to be \emph{\textbf{$(M,\eta)$-equivariant with respect to $T$}} if
\begin{equation*} 
    \sum_{x \in \{ 0,1\}^{F_M}} \|E_{\alpha\cyl{x}} - T^*E_{\cyl{x}}T \|_{\HS}^2 \leq \eta.
\end{equation*}
\end{definition}

Naturally, approximately equivariant PVMs induce approximately invariant measures:

\begin{lemma}[From PVM to measures]\label{lem:PVM-to-meas}
Let $M \in \N, \eta >0$. Given a PVM $E$ on $X$ with values in $\Proj(\mathcal H)$ which is $(M,\eta)$-equivariant with respect to $T \in U(\mathcal H)$, the probability measure defined by
$$ \mu(b) = \tr(E_b) \quad\text{ $b\subset X$ Borel}\;,$$
is $(M,\eta)$-invariant.
\end{lemma}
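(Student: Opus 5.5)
We need to show that $\sum_x |\tr(E_{\alpha\cyl{x}}) - \tr(E_{\cyl{x}})| \le \eta$. We also need $\mu$ to be a probability measure. This is immediate: $E$ is countably additive with $E_X = 1$ (a PVM), and $\tr$ is a normalized positive linear functional. So $\mu$ is countably additive, nonnegative, and $\mu(X)=\tr(1)=1$.

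For the invariance estimate, the plan is to compare each term with the equivariance defect. Fix $x \in \{0,1\}^{F_M}$. Since $\tr$ is unitarily invariant, $\tr(T^*E_{\cyl{x}}T)=\tr(E_{\cyl{x}})$. Write $P = E_{\alpha\cyl{x}}$ and $Q = T^*E_{\cyl{x}}T$, both projections. Then
\[
|\mu(\alpha\cyl{x})-\mu(\cyl{x})| = |\tr(P)-\tr(Q)| = |\tr(P-Q)|.
\]

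The key step is a pointwise inequality for two projections:
\[
|\tr(P-Q)| \le \tr\big((P-Q)^2\big) = \|P-Q\|_{\HS}^2 .
\]
To prove it, expand $(P-Q)^2 = P + Q - PQ - QP$. This gives $\tr((P-Q)^2)=\tr(P)+\tr(Q)-2\tr(PQ)$. Moreover $\tr(PQ)=\tr(PQP)\le \tr(P)$, and symmetrically $\tr(PQ)\le\tr(Q)$, so $\tr(PQ)\le \min(\tr P,\tr Q)$. Hence
\[
\tr((P-Q)^2) \ge \max(\tr P,\tr Q) - \min(\tr P,\tr Q) = |\tr P - \tr Q| .
\]
This gives the quadratic-in-defect control. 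Using Cauchy--Schwarz instead would only yield $\|P-Q\|_{\HS}$ (not squared), which summed over $2^{|F_M|}$ cylinders would lose a factor. So the projection identity is the one real point of the argument.

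Summing the inequality over all $x \in \{0,1\}^{F_M}$ gives
\[
\sum_x |\mu(\alpha\cyl{x})-\mu(\cyl{x})| \le \sum_x \|E_{\alpha\cyl{x}} - T^*E_{\cyl{x}}T\|_{\HS}^2 \le \eta,
\]
by Definition~\ref{def:approx_equiv_PVM}. This is exactly $(M,\eta)$-invariance in the sense of Definition~\ref{def:approx_inv_meas}.

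I expect no real obstacle. The only care needed is to use the projection property to obtain the squared HS norm rather than the unsquared one.
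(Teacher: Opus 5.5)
Your proposal is correct and follows essentially the same route as the paper: unitary invariance of $\tr$, then the projection inequality $|\tr(P)-\tr(Q)|\leq\|P-Q\|_{\HS}^2$ (the paper's Lemma~\ref{lem:proj_diff_bound}, proved there via positivity of $\tr(P(1-Q)P)$ and $\tr(Q(1-P)Q)$, which is the same as your bound $\tr(PQ)\leq\min(\tr P,\tr Q)$), summed over cylinders. Your extra check that $\mu$ is a probability measure is a harmless addition the paper leaves implicit.
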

\begin{proof}
    Using Lemma \ref{lem:proj_diff_bound} below and the fact that $\mu(\cyl{x}) = \tr(E_{\cyl{x}}) = \tr(T^*E_{\cyl{x}}T)$ and $\mu(\alpha\cyl{x}) = \tr(E_{\alpha\cyl{x}})$ for all $x \in \{ 0,1\}^{F_M}$, we have:
    \begin{align*}
        \sum_{x \in \{0,1\}^{F_{M}}} &|\mu(\cyl{x}) - \mu(\alpha\cyl{x})| =\sum_{x \in \{0,1\}^{F_{M}}} | \tr(T^*  E_{\cyl{x}} T) - \tr(E_{\alpha\cyl{x}}) | \\
    &\leq\sum_{x \in \{ 0,1\}^{F_M}} \|E_{\alpha\cyl{x}} - T^*E_{\cyl{x}}T \|_{\HS}^2 \leq \eta\;.
    \end{align*}
\end{proof}

The following bound was used in the proof of Lemma~\ref{lem:PVM-to-meas}, and is generally useful to compare traces of different projections in terms of the square Hilbert--Schmidt distance between them. 
It improves the Cauchy--Schwarz bound $| \tr(P) - \tr(Q) | \leq \| P-Q\|_{\HS}$ with an extra factor. 

\begin{lemma}\label{lem:proj_diff_bound}
    For any two projections $P,Q$, we have
    $$|\tr(P) - \tr(Q) | \leq \| P-Q\|_{\HS}^2$$
\end{lemma}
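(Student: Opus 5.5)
The plan is to expand $\|P-Q\|_{\HS}^2$ directly, using only that $P$ and $Q$ are self-adjoint idempotents. Since $P^*=P=P^2$ and $Q^*=Q=Q^2$, we have
\[
\|P-Q\|_{\HS}^2 = \tr\big((P-Q)^2\big) = \tr(P) + \tr(Q) - 2\,\tr(PQ).
\]
It therefore suffices to show
\[
|\tr(P)-\tr(Q)| \leq \tr(P)+\tr(Q)-2\tr(PQ).
\]
By symmetry, assume without loss of generality that $\tr(P)\geq \tr(Q)$. The inequality then reduces to $\tr(PQ)\leq \tr(Q)$.

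To prove this last bound, write $\tr(PQ)=\tr(QPQ)$ using cyclicity and $Q^2=Q$. Since $P\leq \Id$ as operators, conjugating by $Q$ gives $QPQ\leq Q^2=Q$. Taking the normalized trace, which is monotone on positive operators, yields $\tr(QPQ)\leq \tr(Q)$.

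Along the way we also note that $\tr(PQ)=\tr(QPQ)\geq 0$ is real, so all the quantities above are real numbers. Combining the steps gives the claimed inequality.

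I expect no real obstacle here. The only point needing care is justifying $\tr(PQ)\le \min(\tr P,\tr Q)$, and the operator inequality $QPQ\le Q$ handles that.
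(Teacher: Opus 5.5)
Your proof is correct and is essentially the paper's argument. Both write $\|P-Q\|_{\HS}^2$ as the sum of the two nonnegative quantities $\tr(P)-\tr(PQ)=\tr(P(1-Q)P)$ and $\tr(Q)-\tr(PQ)=\tr(Q(1-P)Q)$, whose difference is $\tr(P)-\tr(Q)$; the paper concludes with $|x-y|\leq x+y$ for $x,y\geq 0$, while you reduce by symmetry to the single inequality $\tr(QPQ)\leq \tr(Q)$, which is the same positivity fact.
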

\begin{proof}
Note that $\tr(P(1-Q)P) = \tr(P) - \tr(PQ)$ and $\tr(Q(1-P)Q) = \tr(Q) - \tr(PQ)$ are both positive. Consequently: 
\begin{align*}
    | \tr(P) - \tr(Q) | &= |\tr(P(1-Q)P) - \tr(Q(1-P)Q)| \\
    &\leq \tr(P(1-Q)P) + \tr(Q(1-P)Q) = \| P-Q\|_{\HS}^2.
\end{align*}
\end{proof}

\subsection{Representation theory of the lamplighter group}\label{sec:rep-theory}

We now turn to describing the finite dimensional representations of the lamplighter group, our main object of study.

Let $\Gamma = \Z/2 \wr \Z = \langle a,t | a^2, \; [a,t^nat^{-n}]\;n \in \N \rangle$ be the lamplighter.
Denote by $G = \langle t \rangle \simeq \Z$ the cyclic subgroup of infinite order, and let $N = \bigoplus_{\Z} \Z/2$ be the lamp group, so that $G \acts N$ by the left shift, and $\Gamma = N \rtimes G$.
Let $\widehat{N}$ be the Pontryagin dual, so that $\widehat{N} \simeq X = \prod_\Z \Z /2$.
The action $G \acts N$ induces a dual action $G \acts \widehat{N}$, which is isomorphic to the full right shift $L:X \to X$ (see Section \ref{sec:Ber}).

\begin{definition}\label{def:rep-of-lamp}
Fix $d \in \N$, and let $\lambda\in \mathbb{T}$ and $y \in \{0,1\}^{[0,d-1]}$. Consider the matrix
$$C_{d,\lambda} = \begin{pmatrix} 0 & 0 & 0 & \cdots & 0 & \lambda \\ 1 & 0 & 0 & \cdots & 0 & 0 \\ 0 & 1 & 0 & \cdots & 0 & 0 \\ \vdots & \vdots &  \ddots &\ddots &\vdots & \vdots\\ 0& 0 & 0& \cdots & 1 & 0 \end{pmatrix}$$
and define the unitary representation $\pi_{y,\lambda}: \Gamma \to U(d)$ by
\begin{align*}
    \pi_{y,\lambda}(a) &= \mathrm{diag}((-1)^{y_0},  \dots, (-1)^{y_{d-1}}),\\
    \pi_{y,\lambda}(t) &= C_{d,\lambda}.
\end{align*}
\end{definition}

Note that $C_{d,\lambda}^d = \lambda\Id$, so that $\pi_{y,\lambda}$ has finite image if and only if $\lambda$ is a root of unity.
Further, $\pi_{y,\lambda}$ is irreducible if and only if $y$ is the projection of an element $x \in X_{\per}(d) \setminus X_{\per}(d-1)$ to the $[0,d-1]$-coordinates.
The collection $\{\pi_{y,\lambda}\}$ ranging over all $d \in \mathbb N, \lambda \in \mathbb T$ and $y \in \{0,1\}^{[0,d-1]}$ as above forms all isomorphism classes of finite-dimensional irreducible representations of the lamplighter group (see \cite[Corollary 4.B.9]{BdlH}).


 We mention an alternative way, via the Mackey machine, to describe finite dimensional irreducible representations using periodic elements in the full shift (as in \cite[\S 3.E]{BdlH}): 
 Fix $d \in \N$, and let $G(d) = \langle t^d \rangle$.
 Fix $\lambda \in \mathbb T$, and  $x \in X_{\per}(d) \setminus X_{\per}(d-1)$.
 Let $\chi_\lambda \in \widehat{G(d)}$ be the character defined by $\chi_\lambda(t^{dn}) = \lambda^n$, and let $\chi_x \in \widehat{N}$ be the character corresponding to $x$.
 Then we can define the character $\chi_{\lambda, x} (t^n, \xi) = \chi_\lambda(t^n)\chi_x(\xi)$.
 The induced representation $\mathrm{Ind}_{N \rtimes G(d)}^\Gamma \chi_{\lambda, x}$ is exactly the representation $\pi_{y,\lambda}$, where $y$ is the projection of $x$ to the coordinates $[0,d-1]$.
 The advantage of this approach is that it extends easily to any metabelian group of the form $N \rtimes \mathbb Z$ as in \cite{yaari2025density}.

%

\section{Projection towers}
\label{sec:proj-towers}

Given the structure of representations of the lamplighter group, and more generally, semidirect products as evidenced in Section~\ref{sec:rep-theory}, the following concept naturally plays an important role in our arguments. 

\begin{definition}[Projection towers]\label{def:proj-tower}
    Let $\mathcal H$ be a Hilbert space. We call a tuple $\tau = (P_0,\ldots,P_{j-1};R)$ a \textbf{\emph{projection tower of height $j$ on $\mathcal H$}} if $P_0,\ldots,P_{j-1}$ are pairwise orthogonal projections and $R$ a unitary on $\mathcal H$ such that
    $$R^*  P_i R = P_{i+1} \text{ for all $0\leq i< j-1$}\;.$$ 
    In this case, we will refer to $P_0$ as the \emph{\textbf{base}} of $\tau$, and to the projection $P_\tau = \sum_{i=0}^{j-1}P_i$ as the \emph{\textbf{support}} of $\tau$.
    Further, we say $\tau$ is \textbf{\emph{closed}} if $R^*P_{j-1}R = P_0$.
\end{definition}

\begin{remark}
    We have introduced two notions of towers, the projection towers of Definition~\ref{def:proj-tower} and the clopen towers appearing in the full shift (Section~\ref{sec:Ber}). The relation between them, via projection valued measures, will be explained later in Section \ref{sec:proof-main}. We will often refer to either object as a ``tower'' as it will generally be clear from context. 
\end{remark}

Note that if $\tau$ is a closed tower, then $P_\tau \mathcal H$ is an invariant subspace of $R$, that is $RP_\tau=P_\tau R$.
The relevance of closed projection towers to the lamplighter group stems from the following simple observation:

\begin{lemma}[From towers to representations]\label{lem:tower_to_rep}
Let $\tau = (P_0,\ldots,P_{j-1};R)$ be a closed projection tower on $\mathcal H$, and let $x \in \{0,1\}^{[0,j-1]} \simeq X_{\per}(j)$. The  operators
\begin{align*}
    A _{\tau,x}&= \sum_{i=0}^{j-1}  (-1)^{x_i} P_i +(I - P_\tau), \\
    T_\tau &=  R^*,
\end{align*}
define a unitary representation $\rho_{\tau,x}$ of the lamplighter $\Gamma$ on $\mathcal H$, by taking $\rho_{\tau,x}(a)=A_{\tau,x},\; \rho_{\tau,x}(t)=T_\tau$.
Moreover, this representation is a direct sum of representations of the form $\pi_{x, \lambda_i}$ (for  some $\lambda_i \in \mathbb T$) as in Definition \ref{def:rep-of-lamp}, together with the identity representation on $(I - P_\tau)\mathcal H$.
\end{lemma}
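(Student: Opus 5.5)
We verify the defining relations directly and then decompose $P_\tau\mathcal H$ explicitly.

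\textbf{Step 1: the relations.} Since the $P_i$ are pairwise orthogonal projections and $I-P_\tau$ is the projection onto the complement, $A_{\tau,x}$ is a self-adjoint unitary with $A_{\tau,x}^2=I$. From closedness, $R^*P_iR=P_{i+1 \pmod j}$ for all $i$, so conjugation by $R$ cyclically permutes the $P_i$. Hence $R^*P_\tau R=P_\tau$ and $R$ commutes with $P_\tau$. Iterating, $T_\tau^{-n}A_{\tau,x}T_\tau^{n}=R^nA_{\tau,x}R^{-n}$ (or its mirror, depending on the sign convention) equals $\sum_i(-1)^{x_i}P_{i+n \pmod j}+(I-P_\tau)$ up to the direction of the index shift. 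All such operators are linear combinations of the commuting family $\{P_0,\dots,P_{j-1},I-P_\tau\}$, so they commute with $A_{\tau,x}$. Therefore $a\mapsto A_{\tau,x}$, $t\mapsto T_\tau$ respects every relation of the presentation and defines $\rho_{\tau,x}$.

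\textbf{Step 2: the complement.} On $(I-P_\tau)\mathcal H$, $A_{\tau,x}$ acts as the identity and $T_\tau$ preserves this subspace. So the representation there is $a\mapsto 1$, with $t$ acting by the unitary $R^*|_{(I-P_\tau)\mathcal H}$. This is the ``identity'' part of the statement. Strictly speaking it is a direct sum of one-dimensional representations $a\mapsto1$, $t\mapsto\lambda$, which are exactly the $\pi_{0,\lambda}$ with $d=1$.

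\textbf{Step 3: decomposing $P_\tau\mathcal H$.} The unitary $U=R^{j}$ (equivalently $T_\tau^j$) maps $P_0\mathcal H$ to itself, since $R^{-j}P_0R^{j}=P_0$. Diagonalize $U|_{P_0\mathcal H}$ with an orthonormal eigenbasis $v_1,\dots,v_m$ and eigenvalues $\lambda_1,\dots,\lambda_m$ (adjusting conjugates according to orientation). For each $k$, set $e_{k,i}=T_\tau^i v_k\in P_i\mathcal H$ for $0\le i<j$. These vectors are orthonormal because the $P_i$ are orthogonal and $T_\tau$ is unitary. They span $P_\tau\mathcal H$ because $T_\tau^i$ maps $P_0\mathcal H$ onto $P_i\mathcal H$. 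In the basis $e_{k,0},\dots,e_{k,j-1}$, $T_\tau$ acts by $e_{k,i}\mapsto e_{k,i+1}$ for $i<j-1$ and $e_{k,j-1}\mapsto T_\tau^jv_k=\lambda_k e_{k,0}$. This is exactly the matrix $C_{j,\lambda_k}$. Meanwhile $A_{\tau,x}$ acts diagonally by $(-1)^{x_i}$ on $e_{k,i}$, possibly after matching $x$ with the correct indexing direction. Hence each span is a copy of $\pi_{x,\lambda_k}$.

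The main point of care is bookkeeping rather than difficulty. One must match the shift direction in $R^*P_iR=P_{i+1}$ with the companion-matrix convention of Definition~\ref{def:rep-of-lamp} and the orientation of $x$. Where needed, this means relabeling $\lambda_k\leftrightarrow\bar\lambda_k$ or reversing indices.
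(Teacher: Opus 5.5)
Your proof is correct and follows the paper's route: the relations follow directly from closedness, and $P_\tau\mathcal H$ decomposes by diagonalizing $T_\tau^j$ on the base $P_0\mathcal H$ and transporting the eigenvectors up the tower, which you write out in more detail. No orientation adjustment is actually needed: with $T_\tau=R^*$ and $\lambda_k$ the eigenvalues of $T_\tau^j|_{P_0\mathcal H}$, one gets exactly $C_{j,\lambda_k}$ and $\pi_{x,\lambda_k}$. Your Step 2 is also more precise than the paper, which asserts that $\rho_{\tau,x}$ is the identity on $(I-P_\tau)\mathcal H$; that requires $R$ to act trivially there, which does hold in the paper's application because the unitary produced by Lemma~\ref{lem:lowd} is the identity off the support of the new tower.
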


\begin{proof}
    The fact that $(A_{\tau,x},T_\tau)$ are unitaries which satisfy the defining relations of the lamplighter group is immediate from the definition of a closed projection tower, so that $\rho_{\tau,x}:\Gamma \to U(\mathcal H)$ is a unitary representation.
    Notice that $R^{-j}P_0R^{j} = P_0$, so that $P_0 T^jP_0 = P_0 R^{-j} P_0$ is a unitary on the image of $P_0$. 
    Also note that $P_\tau \mathcal H$ is an invariant subspace of $\rho_{\tau,x}$, and $\rho_{\tau,x}$ acts as the identity on the complement.
    By considering the spectrum of $P_0T^jP_0$, it readily follows that we have a direct sum decomposition of the form
    $$ \rho_{\tau,x} P_\tau = \bigoplus_{\lambda \in \textrm{Spec}(P_0T^jP_0)} \pi_{x,\lambda}\;.$$
\end{proof}

\begin{remark}\label{rem:tower_to_rep}
    It is is easy to extend Lemma \ref{lem:tower_to_rep} to semidirect products of the form $N \rtimes G$, where $N$ is abelian and $G$ is a finitely generated residually finite group, where a closed projection tower is one that arrises from a finite quotient of $G$.
\end{remark}
In the course of the proof we will obtain tuples that are \emph{approximately} a tower, and \emph{approximately closed}, in the following sense. 

\begin{definition}[Approximately closed projection towers]\label{def:approx-proj-tower}
    A tuple $\tau = (P_0,\ldots,P_{j-1};R)$ is an \textbf{\emph{approximate $(\delta_1,\delta_2)$-closed projection tower of height $j$}} if $P_0,\ldots,P_{j-1}$ are pairwise orthogonal projections and $R$ a unitary such that
\begin{align}\label{eq:lowd-0a}
    \sum_{i=0}^{j-2} \| R^*  P_i R  - P_{i+1}\|_{\HS}^2 &\leq \delta_1\;, \\
     \| R^*  P_{j-1} R  - P_{0}\|_{\HS}^2 &\leq \delta_2\notag\;.
\end{align}
In this case, we will refer to $P_0$ as the \emph{\textbf{base}} of $\tau$, and to the projection $P_\tau = \sum_{i=0}^{j-1}P_i$ as the \emph{\textbf{support}} of $\tau$.
\end{definition}

The following introduces a notion of closeness on (approximate) towers. 
The closeness is measured separately for the projections involved, and the tower transformation.

\begin{definition}
   Given $\tau = (P_0,\ldots,P_{j-1};R)$ and $\tau'=(P'_0,\ldots,P'_{j-1};R')$ two (approximate) projection towers of the same height, we say that they are $(\eps_1,\eps_2)$-close if
   \begin{equation}\label{eq:lowd-0b}
    \sum_{i=0}^{j-1} \|  P_i - P'_{i}\|_{\HS}^2 \leq \eps_1\;,
\end{equation}
and 
  \begin{equation}\label{eq:lowd-0c}
      \| P_\tau RP_\tau - P_{\tau'}R'P_{\tau'}\|_{\HS}^2 \leq \eps_2\;.
\end{equation}
\end{definition}

\subsection{Stability for approximate projection towers} \label{subsec:stab_approx_tower}

The following stability lemma is of crucial importance; it will allow us to round an approximately closed approximate projection tower to a true closed projection tower.

\begin{lemma}[Rounding approximate closed projection towers]\label{lem:lowd}
Let $(P_0,\ldots,P_{j-1};R)$ be an approximate $(\delta_1,\delta_2)$-closed projection tower, and let $P$ be its support.

Then there exists a closed projection tower $(P'_0,\ldots,P'_{j-1};R')$ that is $(\eps_1,\eps_2)$-close to it, where 
\[\eps_1 = O\big(j\delta_1 \big)\qquad\text{and}\qquad \eps_2=O\big(j^2\delta_1+\delta_2\big)\;.\]
Furthermore, we can take $P'_i\leq P_i$ for all $0\leq i <j$ and $R'$ is a unitary that acts as the identity on the complement of $P'$. 
\end{lemma}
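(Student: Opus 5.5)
The plan is to build the closed tower by transporting a shrunken base along $R$ one floor at a time, replacing each compression of $R^*$ by an exact partial isometry. Every step should discard only as much dimension as the local defect $\gamma_i := \|R^*P_iR - P_{i+1}\|_{\HS}^2$. The totals are then controlled by $\sum_i \gamma_i \le \delta_1$, rather than by the cumulative drift of $R^{*i}P_0R^{i}$ away from $P_i$. Intersecting $P_0$ with all the $R^{i}P_iR^{-i}$ at once would pay for that drift and lose a factor of $j$ in $\eps_1$.

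\textbf{Step 1 (one floor).} Consider the contraction $V_i = P_{i+1}R^*P_i$ from $P_i\mathcal H$ to $P_{i+1}\mathcal H$. We have $\tr(P_i - V_i^*V_i) = \tr(P_i) - \tr(P_iRP_{i+1}R^*P_i)$. This quantity is $O(\gamma_i)$, because $\tr(P_iRP_{i+1}R^*) = \tr(R^*P_iR\,P_{i+1})$ and $\tr(P_i)=\tr(R^*P_iR)$, and then the computation in the proof of Lemma~\ref{lem:proj_diff_bound} applies. Let $S_i \le P_i$ be the spectral projection of $V_i^*V_i$ for eigenvalues $\ge 1/2$. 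By Markov's inequality, $\tr(P_i - S_i) \le 2\tr(P_i - V_i^*V_i) = O(\gamma_i)$. On $S_i\mathcal H$, let $U_i$ be the isometric part of the polar decomposition of $V_i$. The standard polar estimate $\|(V_i - U_i)S_i\|_{\HS}^2 \le \tr(S_i - V_i^*V_iS_i) = O(\gamma_i)$ then shows that $U_i$ is $O(\gamma_i)$-close, in squared HS, to $R^*$ compressed to that floor.

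\textbf{Step 2 (chaining).} Set $Q_0 := P_0$ and define recursively $Q_{i+1} := U_i(Q_i \wedge S_i)U_i^*$. Then pull back the final surviving subspace: $P'_{j-1} := Q_{j-1}$ and $P'_i := U_i^*P'_{i+1}U_i$. All $P'_i$ have the same rank, $P'_i \le P_i$, and the $U_i$ map $P'_i\mathcal H$ isometrically onto $P'_{i+1}\mathcal H$. At each stage the rank lost is at most $\tr(P_i - S_i) = O(\gamma_i)$ plus the rank already missing from $P_i$. Hence $\tr(P_i - P'_i) \le O(\sum_k \gamma_k) = O(\delta_1)$ for every $i$. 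Since $P'_i\le P_i$, we have $\|P_i-P'_i\|_{\HS}^2 = \tr(P_i-P'_i)$, and summing over the $j$ floors gives $\eps_1 = O(j\delta_1)$.

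\textbf{Step 3 (closing and the bound on $R'$).} Define $R'^*$ to be $U_i$ on $P'_i\mathcal H$ for $i<j-1$, and to be the polar unitary part of $P'_0R^*P'_{j-1}$ on $P'_{j-1}\mathcal H$. This part is a genuine unitary from $P'_{j-1}\mathcal H$ onto $P'_0\mathcal H$, since the two spaces have equal dimension; any kernel is completed arbitrarily. Set $R'^*$ to be the identity on $(I-P')\mathcal H$. This gives a closed tower with the stated properties.

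It remains to bound $\|PRP - P'R'P'\|_{\HS}^2$, which I expect to be the main obstacle. I would split $PR^*P$ into the blocks $P_kR^*P_i$. The off-diagonal blocks with $k \ne i+1$ are controlled by $\gamma_i$, since $P_kR^*P_i = P_k(R^*P_iR - P_{i+1})R^*$. The blocks $k=i+1$ with $i<j-1$ differ from $U_i$ by $O(\gamma_i)$ plus the cost of cutting $P_i, P_{i+1}$ down to $P'_i, P'_{i+1}$, which is $O(\tr(P_i-P'_i)) = O(\delta_1)$ per floor. Summed over the $j$ floors, and combined with the triangle inequality and Cauchy–Schwarz across floors, this gives the $O(j^2\delta_1)$ term. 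The closing block is compared with $P_0R^*P_{j-1}$ via the same polar estimate together with $\delta_2$. That comparison requires showing that $P'_0R^*P'_{j-1}$ is nearly isometric, which I would deduce from $\|R^*P_{j-1}R - P_0\|_{\HS}^2 \le \delta_2$ and $\tr(P_0-P'_0),\tr(P_{j-1}-P'_{j-1}) = O(\delta_1)$. This yields $\eps_2 = O(j^2\delta_1 + \delta_2)$.
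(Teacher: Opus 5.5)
Your argument is correct, but it builds the closed tower differently from the paper.

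\textbf{The paper's route.} It decouples the two tasks. It first equalizes ranks by taking \emph{arbitrary} subprojections $P'_i\le P_i$ of the minimal rank $\rk(P_{i_0})$ (Claim~\ref{claim:p-bound}). This is pure trace bookkeeping, and each floor loses at most $\sum_k|\tr(P_{k+1})-\tr(P_k)|\le\delta_1$. Only afterwards does it apply a polar decomposition (Claim~\ref{claim:svd}) to each compression $P'_iRP'_{i+1}$, closing leg included, after checking that these compressions are nearly isometric.

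\textbf{Your route.} You polar-decompose first, on the well-conditioned spectral part $S_i$ of the uncut compression. You then choose the $P'_i$ by pushing $P_0$ forward along the $U_i$ and pulling the survivor back. As a result, the non-closing legs of $R'$ match the floors exactly by construction, and only the closing leg needs a fresh polar decomposition on the cut spaces.

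\textbf{Comparison.} Your route ties the floors to $R$, at the price of the spectral cutoff and the chaining/pullback bookkeeping. It gives the same per-floor loss $O(\delta_1)$ and the same $\eps_1=O(j\delta_1)$. The paper's arbitrary choice is shorter, handles every leg with one uniform lemma, and does not suffer the drift you warn about either.

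\textbf{Bookkeeping points.}
\begin{enumerate}
\item In Step~2, the rank deficit of $Q_{i+1}$ inside $P_{i+1}$ also picks up $\tr(P_{i+1})-\tr(P_i)$, not only $\tr(P_i-S_i)$ plus the deficit of $Q_i$.
\item The bound on $\tr(P_i-P'_i)$ for $i<j-1$ uses $|\tr(P_i)-\tr(P_{j-1})|$.
\end{enumerate}
Both quantities are at most $\sum_k\gamma_k\le\delta_1$ by Lemma~\ref{lem:proj_diff_bound}, so nothing changes.

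\textbf{On $\eps_2$.} The blocks $P_k(PR^*P-P'R'^*P')P_i$ are mutually orthogonal in the Hilbert--Schmidt inner product. Pythagoras therefore gives $\eps_2=O(j\delta_1+\delta_2)$ directly, which is stronger than required. Cauchy--Schwarz across floors is unnecessary. If you also applied it to the closing block, you would get $j\delta_2$ instead of $\delta_2$, so keep that block separate. The paper's semitriangle step is lossy in the same way.
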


Let us point out an observation regarding approximate towers that will be relevant to our general proof strategy for stability of the lamplighter --
If $(P_0,\dots, P_{j-1}; R)$ is an approximate tower such that 
$$\sum_{i=0}^{j-2} \| R^*  P_i R  - P_{i+1}\|_{\HS}^2 = \delta_1$$
is sufficiently small, it follows that $\tr(P_0)$ and $\tr(P_{j-1})$ are sufficiently close to $\tr(P)/j$, and so $\| R^*P_{j-1}R - P_0\|_{\HS}^2$ is suitably bounded by $O(\tr(P)/j)$.
This implies that if $\delta_1$ is small enough and $j$ is large enough, then the tower is automatically almost closed.
It thus makes sense to distinguish between "high towers" and "short towers" when applying Lemma \ref{lem:lowd}.

The proof of Lemma \ref{lem:lowd} proceeds in two steps. In the first step, we remove a natural obstruction to $(P_0,\ldots,P_{j-1};R)$ being an exact tower, which is that the rank of the $P_i$ need not be the same, whereas for an exact tower it must be. That is, we identify projections $P'_0,\ldots,P'_{j-1}$ that are close to $P_0,\ldots,P_{j-1}$ and all $P'_i$ have the same rank. In the second step, we modify $R$ to a unitary $R'$ that exactly cyclically conjugates the $P'_i$. 

For the first step, we will rely on the following simple claims. 

\begin{claim}\label{claim:p-bound}
Let $P_0,\ldots,P_{j-1}$ be pairwise orthogonal projections and $P=P_0+\cdots+P_{j-1}$. Then there exists orthogonal projections  $P'_0,\ldots,P'_{j-1}$ each of the same dimension, such that $P'_i \leq P_i$, and 
\begin{equation}\label{eq:p-bound-0}
\sum_{i=0}^{j-1} \| P'_i-P_i\|^2_{\HS}
\leq j\cdot \sum_{i=0}^{j-2} |\tr(P_{i+1})-\tr(P_{i})|\;.
 \end{equation}
 Furthermore, 
 $$\sum_{i=0}^{j-1} | \tr(P_i) - \tr(P) / j| \leq  2j\cdot \sum_{i=0}^{j-2} |\tr(P_{i+1})-\tr(P_{i})|$$
\end{claim}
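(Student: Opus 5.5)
The plan is to take all $P'_i$ to have the minimal rank $m=\min_i \rk(P_i)$, choosing $P'_i\le P_i$ to be any subprojection of rank $m$. This is possible because each $P_i$ has rank at least $m$. Orthogonality of the $P'_i$ is inherited from that of the $P_i$, since $P'_i\le P_i$.

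Because $P'_i\le P_i$, the difference $P_i-P'_i$ is itself a projection, of normalized trace $\tr(P_i)-m/d$, where $d=\dim\mathcal H$. Hence
$$\|P_i-P'_i\|_{\HS}^2=\tr(P_i)-\tr(P_{i_0}),$$
where $i_0$ is an index attaining the minimum. By a telescoping argument along the chain $i_0,\dots,i$, each term is bounded by
$$\sum_{k=0}^{j-2}|\tr(P_{k+1})-\tr(P_k)| =: D.$$
Summing over the $j$ indices gives $\sum_i\|P_i-P'_i\|_{\HS}^2\le jD$, which is exactly \eqref{eq:p-bound-0}.

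For the second assertion, I would argue as follows. For any $i,k$ we have $|\tr(P_i)-\tr(P_k)|\le D$, again by telescoping. Since $\tr(P)/j$ is the average of the $\tr(P_k)$, this gives
$$|\tr(P_i)-\tr(P)/j|\le \frac1j\sum_k|\tr(P_i)-\tr(P_k)|\le D.$$
Summing over $i$ yields the bound $jD\le 2jD$.

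There is no real obstacle here; the only point needing care is that ranks are integers. Using $\min$ rank rather than a rounded average sidesteps this, and it also gives $P'_i\le P_i$ directly.
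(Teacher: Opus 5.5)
Your proof is correct. The construction and the proof of \eqref{eq:p-bound-0} are exactly the paper's: take subprojections of rank $\min_i \rk(P_i)$, then telescope to a minimizing index $i_0$.

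For the furthermore part the paper argues differently, with $D$ as in your notation. It notes that $\tr(P'_i)\le \tr(P)/j$, so $\sum_i |\tr(P)/j-\tr(P'_i)| = \sum_i(\tr(P_i)-\tr(P'_i))\le jD$. It then passes through $\tr(P'_i)$ with the triangle inequality to get $2jD$. Your direct averaging of the pairwise bounds $|\tr(P_i)-\tr(P_k)|\le D$ avoids the $P'_i$ entirely and gives the sharper bound $jD$.
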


\begin{proof}
Let $i_0$ be such that $\tr(P_{i_0})$ is minimal out of $P_0, \dots, P_{j-1}$.
Note that for all $i_0<i<j$ we have by the triangle inequality:
$$ |\tr(P_i) - \tr(P_{i_0})| \leq \sum_{k=i_0}^{i-1} |\tr(P_{k+1}) - \tr(P_k)|\leq \sum_{k=0}^{j-2}|\tr(P_{k+1}) - \tr(P_k)|\;.$$
Similarly, the same is true for each $0 \leq i< i_0$.
For each $0\leq i<j$ let $P'_i$ be an arbitrary $\rk(P_{i_0})$-dimensional subspace of $P_i$.
Then by construction $P'_i\leq P_i$ for all $i$ and so
\begin{align*}
\sum_{i=0}^{j-1} \| P'_i-P_i\|^2_{\HS} &= \sum_{i=0}^{j-1} |\tr(P_i)-\tr(P'_i)| = \sum_{i=0}^{j-1} |\tr(P_i)-\tr(P_{i_0})|\\
&\leq j\cdot  \sum_{i=0}^{j-2} |\tr(P_{i+1})-\tr(P_{i})|\;.
\end{align*}
Since $\tr(P_{i_0})$ was minimal, we have $\tr(P_i') \leq \tr(P)/j$ for all $i$. It follows that 
$$\sum_{i=0}^{j-1} (\tr(P)/j- \tr(P_i')) =  \sum_{i=0}^{j-1} \tr(P_i)-\tr(P'_i) \leq j\cdot  \sum_{i=0}^{j-2} |\tr(P_{i+1})-\tr(P_{i})|\;,$$
 so we get the furthermore statement by the triangle inequality.
\end{proof}

\begin{claim}\label{claim:svd}
    Let $R$ be a contraction and $P,Q$ projections of the same dimension such that $RR^* \leq P$, $R^*  R\leq Q$. Then there exists a partial isometry $V$ with  $V^*V = Q, VV^* = P$ such that $\|R-V\|_{\HS}\leq \|R^*  R-Q\|_{\HS}$. 
\end{claim}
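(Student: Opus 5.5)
Throughout, work on the finite-dimensional space $\cH$ with the normalized Hilbert--Schmidt norm. The plan is to take the polar decomposition of $R$ and show that its unitary part is the required partial isometry, with the norm bound following from an eigenvalue comparison.

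\emph{Setup.} Write the polar decomposition $R = W|R|$, where $|R| = (R^*R)^{1/2}$ and $W$ is a partial isometry with initial space $\overline{\mathrm{ran}}(R^*) = \ker(R)^\perp$ and final space $\mathrm{ran}(R)$. The hypotheses $R^*R \le Q$ and $RR^* \le P$ give $\ker(R)^\perp \subseteq Q\cH$ and $\mathrm{ran}(R) \subseteq P\cH$. Moreover $\rk(R^*R) = \rk(RR^*)$ and $\rk(P) = \rk(Q)$, so the orthocomplements $Q\cH \ominus \ker(R)^\perp$ and $P\cH \ominus \mathrm{ran}(R)$ have the same dimension. Choose any partial isometry $W_0$ between these two complements, and set $V = W + W_0$. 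Then $V^*V = Q$ and $VV^* = P$.

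\emph{Rewriting $R$.} Since $|R|$ is supported in $\ker(R)^\perp$ and $W_0$ vanishes there, $W_0|R| = 0$, so $R = V|R|$. Hence
\[
R - V = V(|R| - Q),
\]
where $|R| - Q$ is supported inside $Q\cH$. Because $V$ is isometric on $Q\cH$, this gives
\[
\|R - V\|_{\HS} = \||R| - Q\|_{\HS}.
\]

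\emph{Main estimate.} It remains to compare $\||R| - Q\|_{\HS}$ with $\|R^*R - Q\|_{\HS}$. Since $R^*R \le Q$, the operator $R^*R$ commutes with $Q$ and is supported in $Q\cH$, so $R^*R$, $|R|$ and $Q$ are simultaneously diagonalizable. Restricted to $Q\cH$, the eigenvalues of $R^*R$ are values $s \in [0,1]$, with $Q$ acting as $1$. The comparison therefore reduces to the scalar inequality
\[
|1 - \sqrt{s}| \le |1 - s| \quad \text{for } s \in [0,1],
\]
which holds because $1 - s = (1 - \sqrt{s})(1 + \sqrt{s})$ and $1 + \sqrt{s} \ge 1$. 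Summing the squares over the eigenvalues gives $\||R| - Q\|_{\HS} \le \|R^*R - Q\|_{\HS}$, and hence $\|R - V\|_{\HS} \le \|R^*R - Q\|_{\HS}$, as claimed.

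\emph{Main obstacle.} The only delicate step is the dimension count that allows $W$ to be extended to a partial isometry exactly from $Q\cH$ onto $P\cH$. This uses equal ranks twice: $\rk(P) = \rk(Q)$ from the hypothesis, and $\rk(R^*R) = \rk(RR^*)$ from the polar decomposition.
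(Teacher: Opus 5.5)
Your proof is correct and follows the same route as the paper. Both take the polar decomposition $R = V|R|$, use $\rk(P)=\rk(Q)$ to make the partial isometry go from $Q$ onto $P$, and then bound $\|R-V\|_{\HS} = \||R|-Q\|_{\HS} \le \|R^*R-Q\|_{\HS}$ via $1-\sqrt{s}\le 1-s$ on $[0,1]$; you just spell out two steps the paper leaves implicit, namely the extension $W_0$ between the complements and the fact that $R^*R$ commutes with $Q$.
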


\begin{proof} 
Note that the assumptions imply $R=PRQ$.
Since $\rk(P) = \rk(Q)$, applying the polar decomposition ${R}=VT$, with $T$ positive and supported on $Q$, gives a partial isometry $V$ with source projection $Q$ and range $P$. We get 
\[ \|{R}-V\|_{\HS} = \|T-Q\|_{\HS}  = \|({R}^*  {R})^{1/2}-Q\|_{\HS}\leq   \|R^*  R - Q\|_{\HS}\;\]
where the last inequality uses $1-z\leq 1-z^2$ for $z\in[0,1]$. 
\end{proof}

We also recall the following elementary lemma, which will be repeatedly used.

\begin{lemma}[semi-triangle inequality]\label{lem:semitriang}
For any normed space $(V, \| \cdot \|) $ and any $A_1, \dots, A_k \in V$, we have
$$\|\sum_{i=1}^k A_i\|^2 \leq k\sum_{i=1}^k \|A_i\|^2.$$
\end{lemma}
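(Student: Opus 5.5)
The statement to prove is the semi-triangle inequality (Lemma~\ref{lem:semitriang}). The plan is to combine the triangle inequality with Cauchy--Schwarz in $\mathbb R^k$.

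First, by the triangle inequality for the norm,
$$\Big\|\sum_{i=1}^k A_i\Big\| \leq \sum_{i=1}^k \|A_i\|.$$
Both sides are nonnegative, so squaring preserves the inequality:
$$\Big\|\sum_{i=1}^k A_i\Big\|^2 \leq \Big(\sum_{i=1}^k \|A_i\|\Big)^2.$$

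Next, apply the Cauchy--Schwarz inequality in $\mathbb R^k$ to the vectors $(1,\dots,1)$ and $(\|A_1\|,\dots,\|A_k\|)$. This gives
$$\Big(\sum_{i=1}^k 1\cdot\|A_i\|\Big)^2 \leq \Big(\sum_{i=1}^k 1\Big)\Big(\sum_{i=1}^k \|A_i\|^2\Big) = k\sum_{i=1}^k\|A_i\|^2.$$
Chaining the two displays yields the claim.

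Equivalently, the second step is convexity of $s\mapsto s^2$ (Jensen's inequality) applied to the average $\frac1k\sum_i \|A_i\|$. No step is a genuine obstacle. The only point to check is that the triangle inequality for $\|\cdot\|$ is all that is used, so the argument holds in any normed space, with no inner product structure needed on $V$ itself.
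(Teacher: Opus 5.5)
Your proof is correct. The paper states this lemma without proof, recalling it as an elementary fact, and the triangle inequality followed by Cauchy--Schwarz in $\mathbb{R}^k$ is the standard argument behind it.
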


\begin{proof}[Proof of Lemma \ref{lem:lowd}]
To apply the claim we evaluate the right-hand side in~\eqref{eq:p-bound-0}:
   \begin{align*}
   \sum_{i=0}^{j-2} \big|\tr(P_i)-\tr(P_{i+1})\big| &\leq \sum_{i=0}^{j-2} \big|\tr(R^*  P_i R)-\tr(P_{i+1})\big|\\
   &\leq \sum_{i=0}^{j-2} \| R^*P_iR - P_{i+1}\|^2_{\HS} \leq \delta_1
   \end{align*}
where we used Lemma \ref{lem:proj_diff_bound} in the second line and condition~\eqref{eq:lowd-0a} in the definition of an approximate tower.
Applying Claim ~\ref{claim:p-bound} we obtain $P'_i \leq P_i$ each of the same dimension satisfying condition~\eqref{eq:lowd-0b} with $\eps_1=O(j\delta_1)$. 
Further, since $P_i' \leq P_i$, it follows from  ~\eqref{eq:lowd-0b} that $\| P - P'\|_{\HS}^2 = \eps_1$.

In the second step, we define $R'$ and verify~\eqref{eq:lowd-0c}. First, note that 
\begin{align} 
\sum_{i=0}^{j-2}\big\| &(P'_{i} R P'_{i+1})^*(P'_{i} R P'_{i+1})  - P'_{i+1}\big\|_{\HS}^2\notag\\
&\leq O\left(\sum_{i=0}^{j-2}\big\| (P_{i} R P_{i+1})^*(P_{i} R P_{i+1})  - P_{i+1}\big\|^2_{\HS} + \big\| P_{i} - P'_i\|_{\HS}^2 + \big\| P_{i+1} - P'_{i+1}\|_{\HS}^2\right)  \notag\\
&\leq O\left(\sum_{i=0}^{j-2}\big\|  R^* P_{i}R  - P_{i+1}\big\|_{\HS}^2 + \big\| P_{i} - P'_i\|_{\HS}^2 + \big\| P_{i+1} - P'_{i+1}\|_{\HS}^2\right)\label{eq:lowd-2}\\
&= O(\delta_1+\eps_1)\;,\notag
\end{align}
where the second line uses the semitriangle inequality (Lemma \ref{lem:semitriang}), the third and last lines use~\eqref{eq:lowd-0a} together with the fact that $\|P A P \|_{\HS}\leq \|A\|_{\HS}$ for any projection $P$ to bound the first term, and~\eqref{eq:lowd-0b} to bound the others. 
Applying Claim~\ref{claim:svd} individually to each $P_{i}'RP_{i+1}'$, we obtain partial isometries $R'_i$ from $P'_{i+1}$ onto $P'_{i}$ such that 
\begin{equation}\label{eq:lowd-4a}
 \sum_{i=0}^{j-2}\big\| P'_{i} R P'_{i+1} -  R'_i\big\|_{\HS}^2 =  O(\delta_1   +\eps_1)\;.
  \end{equation}
For $i=j-1$, a similar calculation but stopping at~\eqref{eq:lowd-2} gives a partial isometry $R'_{j-1}$ from $P'_{0}$ onto $P'_{j-1}$ such that 
\begin{equation}\label{eq:lowd-4b} \big\|P'_{j-1} R P'_{0} -   R'_{j-1} \big\|_{\HS}^2  = O\big( \big\| R^* P_{j-1}R  - P_0\big\|_{\HS}^2 + \eps_1\big) = O(\delta_2 + \eps_1)\;.
  \end{equation}
Overall, setting $P' = \sum_{i=0}^{j-1}P_i'$, we have that $R'= \sum_{i=0}^{j-1} R'_i + (I - P')$ is a unitary  such that $(P_0',\dots,P_{j-1}';R')$ forms a closed projection tower, and 
\begin{align*}
   \| P  RP-P'&R'P'\|_{\HS}^2 = \|P (RP -R'P')\|_{\HS}^2\\
   &=\Big\| \sum_{i=0}^{j-1} P (RP_i-R'P'_i)\Big\|_{\HS}^2\\
   &\leq 2(j-1)\sum_{i=1}^{j-1} \|P  (RP_i-R'P'_i)\|_{\HS}^2 + \|P  (RP_{0}-R'P'_{0})\|_{\HS}^2\\
  &\leq O\left(j\sum_i\|    P(P_{i-1}RP_i-R'_{i-1})\|_{\HS}^2  +  \|P  (P_{j-1}RP_{0}-R'_{j-1})\|_{\HS}^2+ j\delta_1 + \delta_2 \right)\\
  &\leq O\left(j\sum_i\|    P_{i-1}'RP_i'-R'_{i-1}\|_{\HS}^2  +  \|P_{j-1}'RP_{0}'-R'_{j-1}\|_{\HS}^2+ j\delta_1 + \delta_2 + \eps_1\right)\\
  &= O\left(j(\delta_1 + \eps_1)+   \delta_2 \right)\;,
\end{align*}
where we used $P' \leq P, P_i\leq P$ and the semitriangle inequality (Lemma \ref{lem:semitriang}) repeatedly, as well as the equalities $R'P'_i = R'_{i-1 \text{(mod $j$)}}$.
Further, the inequality on the fourth line uses the assumptions ~\eqref{eq:lowd-0a}, and the last two lines use ~\eqref{eq:lowd-0b} and~\eqref{eq:lowd-4a},~\eqref{eq:lowd-4b}. This shows~\eqref{eq:lowd-0c} with $\eps_2=O(j^2\delta_1+\delta_2)$. 
\end{proof}

\section{Proof of the main results}
\label{sec:proof-main}

In this section we give the proof of Theorem \ref{thm:main}, treating the dynamical machinery developed later in Section \ref{sec:decomp} as a black box.
We will freely use the language introduced in Section \ref{sec:Ber} regarding the full shift space.
Throughout the section, we will fix a target error $\kappa$, and a pair $(A,T)$ of unitaries on $\mathcal H = \mathbb C^d$ (for some $d \in \N$) with

\begin{equation}\label{eq:stand_ass1}
\|A^2-1\|_{\HS} \leq \eps\;, \quad \| [A, T^{-i}AT^{i}] \|_{\HS} \leq \eps \quad\text{for all $0\leq i\leq 2M$}\;,
\end{equation}
where $M, \eps$ are as specified in Theorem \ref{thm:main}.

We start by modifying $A,T$ using standard stability lemmas to arrive at the correct starting point to apply dynamical methods.
Without loss of generality, we may use the slightly stronger assumption:
\begin{equation}\label{eq:stand_ass2}
A^2 = 1\;,\quad\| [A_i, A_j] \|_{\HS} \leq \eps\quad \text{for all $i,j\in F_M$}\;,
\end{equation}
where $A_i = T^{-i}AT^{i}$ for $i \in \Z$, and $F_M = \{-M, \dots, M\}$ as in Section \ref{sec:Ber}.
Indeed, by \cite[Proposition 1.4]{DGLT}, under the weaker assumption~\eqref{eq:stand_ass1} there exists $A' \in U(\mathcal H)$ with $A'^2=1$ and $\|A'-A\|_{\HS} \leq \eps$. Replacing $A$ by $A'$ changes the commutator bounds by at most $O(\eps)$, which we absorb into $\eps$ by adjusting the universal constant $c$.
Further, by unitary invariance of the Hilbert--Schmidt norm, the commutator bound in~\eqref{eq:stand_ass1} for shifts up to $2M$ implies the second condition in~\eqref{eq:stand_ass2}: indeed, for $i,j\in F_M$, the difference $j-i$ has absolute value at most $2M$.

A final preprocessing step will be to replace $A_i$ with nearby, truly commuting unitaries, which will be approximately equivariant with respect to $T$.
To this end, we apply the following stability lemma for the group $\mathbb F_2^n$.

\begin{lemma}[Theorem 3.2 in \cite{chao_et_al:LIPIcs.ITCS.2017.4}]\label{lem:ab-close}
Let $P_1, \dots P_n$ be projections on $\mathcal H$ such that 
$$ \| [P_i,P_j] \|_{\HS} \leq \eps_0\quad \text{for all $i,j \leq n$}\;.$$
Then there exists a family of pairwise commuting projections $Q_1, \dots,  Q_n$ such that
$\|Q_i-P_i\|_{\HS} \leq 8n\eps_0$ for all $i$. 
\end{lemma}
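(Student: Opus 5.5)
We prove Lemma~\ref{lem:ab-close} by a sequential rounding procedure. Each step makes one more projection commute exactly with the already-corrected ones, and keeps the later ones nearly commuting. The constant $8n$ suggests a linear accumulation of error: each of the $n$ steps moves things by $O(\eps_0)$.

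\textbf{Setup and first step.} Set $Q_1=P_1$. The commutant of $\{Q_1\}$ is block diagonal with respect to $Q_1\oplus(1-Q_1)$. For $k\ge 2$, replace $P_k$ by the pinching
\[ P_k^{(1)}=Q_1P_kQ_1+(1-Q_1)P_k(1-Q_1). \]
Its distance to $P_k$ equals the norm of the off-diagonal part, and the off-diagonal part is controlled by the commutator, since $[Q_1,P_k]=Q_1P_k(1-Q_1)-(1-Q_1)P_kQ_1$. This gives $\|P_k^{(1)}-P_k\|_{\HS}\le \|[P_1,P_k]\|_{\HS}\le\eps_0$.

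\textbf{Restoring projections.} The pinching is only a positive contraction, not a projection. We replace it by its spectral projection onto eigenvalues $\ge 1/2$. This is a function of a block-diagonal operator, so it still commutes with $Q_1$. For a self-adjoint contraction $B$, the nearest projection $\chi(B)$ satisfies $\|\chi(B)-B\|_{\HS}\le \|B^2-B\|_{\HS}/\text{(const)}$. Moreover, $\|B^2-B\|_{\HS}=O(\|B-P_k\|_{\HS})$ because $P_k^2=P_k$. So each $P_k$ moves by $O(\eps_0)$.

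\textbf{Iteration.} Proceed inductively. At stage $m$ we have $Q_1,\dots,Q_m$ pairwise commuting, and remaining projections $P_k^{(m)}$, $k>m$, that commute with $Q_1,\dots,Q_m$. Take $Q_{m+1}=P_{m+1}^{(m)}$. Pinch the remaining projections by $Q_{m+1}$; this is compatible with the earlier $Q$'s, since everything lives inside their commutant algebra. Then round to projections again.

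\textbf{Main obstacle.} Controlling the growth of the commutators $\|[P_k^{(m)},P_l^{(m)}]\|_{\HS}$ between the remaining projections. Naively each step adds $O(\eps_0)$ to every commutator, giving total displacement $\sum_m O(m\eps_0)=O(n^2\eps_0)$ rather than $8n\eps_0$.

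The first thing to try is to avoid iterating through the modified projections entirely. Instead, pinch $P_k$ \emph{once} by the full family. That is, let $\Phi_{k-1}$ be the conditional expectation onto the commutant of the algebra generated by $Q_1,\dots,Q_{k-1}$, and set
\[ Q_k=\chi\bigl(\Phi_{k-1}(P_k)\bigr). \]
Then
\[ \|P_k-\Phi_{k-1}(P_k)\|_{\HS}\le\sum_{i<k}\|[Q_i,P_k]\|_{\HS}, \]
because the pinching by the algebra generated by $k-1$ commuting projections is a composition of $k-1$ single pinchings, and each single pinching is a contraction. Now bound $\|[Q_i,P_k]\|_{\HS}\le\eps_0+2\|Q_i-P_i\|_{\HS}$. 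This yields the recursion
\[ d_k\le C\Bigl(\sum_{i<k}(\eps_0+2d_i)\Bigr), \]
which unfortunately grows exponentially in $n$.

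So the refined plan is to use the sharper averaged form of the pinching bound. The goal is an estimate of the form $\|P_k-\Phi_{k-1}(P_k)\|_{\HS}\le \max_{i<k}\|[Q_i,P_k]\|_{\HS}$ up to a constant, which would give the linear bound $8n\eps_0$. If that fails, I would fall back on citing the constant from \cite{chao_et_al:LIPIcs.ITCS.2017.4} directly.
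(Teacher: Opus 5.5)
Your proposal has a genuine gap. The paper does not prove this lemma. It imports it from \cite{chao_et_al:LIPIcs.ITCS.2017.4} (operator norm) and \cite[Theorem A.1]{CVY} (Hilbert--Schmidt norm), so your fallback of citing is what the paper does; for the Hilbert--Schmidt statement, the second reference is the one to cite. The self-contained argument you sketch does not reach the statement, and the ``refined plan'' cannot be completed.

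First, the hoped-for estimate $\|A-\Phi_{k-1}(A)\|_{\HS}\le C\max_{i<k}\|[Q_i,A]\|_{\HS}$ is false. Write $R_x$ for the atoms of the algebra generated by $Q_1,\dots,Q_{k-1}$. Then exactly $\|A-\Phi_{k-1}(A)\|_{\HS}^2=\sum_{x\neq y}\|R_xAR_y\|_{\HS}^2\le\sum_{i<k}\|[Q_i,A]\|_{\HS}^2$, which is an $\ell^2$-sum, and it cannot be improved to a maximum. Take mutually orthogonal blocks $\mathcal H_1,\dots,\mathcal H_{k-1}$ of equal weight, with $\mathcal H_i\cong\C^2\otimes\C^r$. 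Let $Q_i=|0\rangle\langle 0|\otimes 1$ on $\mathcal H_i$ and $0$ elsewhere, and let the projection $A$ be $|+\rangle\langle +|\otimes 1$ on each $\mathcal H_i$ and $0$ elsewhere. The $Q_i$ commute and the numbers $\|[Q_i,A]\|_{\HS}$ are all equal. Moreover $\Phi_{k-1}(A)$ is $\frac12$ times the identity on each $\mathcal H_i$, so $\|A-\Phi_{k-1}(A)\|_{\HS}=\sqrt{k-1}\,\max_i\|[Q_i,A]\|_{\HS}$.

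Second, even granting such a bound, the induction would not close. For $k=2$ one has $\|P_2-\Phi_1(P_2)\|_{\HS}=\|[Q_1,P_2]\|_{\HS}$ exactly, so $C\ge 1$. Then $d_k\le C(\eps_0+2\max_{i<k}d_i)$ only gives $d_k\lesssim(2C)^k\eps_0$; inserting $d_i\le 8i\eps_0$ returns $C(16k-15)\eps_0$, which is not $\le 8k\eps_0$. The real loss is the step $\|[Q_i,P_k]\|_{\HS}\le\eps_0+2\|Q_i-P_i\|_{\HS}$. It charges the whole accumulated displacement of $Q_i$ to its commutator with $P_k$, and then doubles it. A linear bound needs direct control of the commutators between corrected and not-yet-corrected operators. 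Your first, sequential scheme tracks exactly these, but your assessment of it is inaccurate. With $\Phi_Q(A)=QAQ+Q^\perp AQ^\perp$ one has
\begin{gather*}
[\Phi_Q(A),\Phi_Q(B)]-\Phi_Q([A,B])=-(QAQ^\perp BQ-QBQ^\perp AQ)-(Q^\perp AQBQ^\perp-Q^\perp BQAQ^\perp)\;,\\
\Phi_Q(P)-\Phi_Q(P)^2=QPQ^\perp PQ+Q^\perp PQPQ^\perp\;.
\end{gather*}
In operator norm, both right-hand sides are second order in the commutators with $Q$. So when $n\eps_0$ is small (the other case is trivial), the commutators stay $O(\eps_0)$ and each $P_k$ moves by $O(\eps_0)$ per step. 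The scheme then gives $O(n\eps_0)$, not $O(n^2\eps_0)$. In the Hilbert--Schmidt norm the same terms are only first order, because one factor has to be bounded in operator norm: for instance $\|QAQ^\perp BQ\|_{\HS}\le\|QAQ^\perp\|_{\mathrm{op}}\|Q^\perp BQ\|_{\HS}$, and $\|QAQ^\perp\|_{\mathrm{op}}$ need not be small. The increments then scale with the current commutators rather than with $\eps_0$, so even your naive $O(n^2\eps_0)$ count is unjustified. Nothing in the proposal handles this, so as written it establishes no bound polynomial in $n$, let alone $8n\eps_0$.
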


This theorem is derived for the operator norm in \cite{chao_et_al:LIPIcs.ITCS.2017.4}, and subsequently for the Hilbert-Schmidt norm in~\cite[Theorem A.1]{CVY} but the proof works equally well for the Hilbert--Schmidt norm.\footnote{Alternatively, a weaker version with a bound of $O(n^2\eps_0)$ can also be deduced from applying uniform Hilbert--Schmidt stability for the family of abelian groups \cite{DA} (see the discussion after \cite[Lemma 2.11]{CVY}).}
 In our scenario, we apply Lemma \ref{lem:ab-close} to the $n=2M+1$ projections $(A_i + 1)/2$, $i \in F_M$, with $\eps_0=O(\eps)$. 
 We obtain a family of pairwise commuting unitaries $B_i, i \in F_M$, which square to the identity and $\| A_i - B_i \|_{\HS}^2 \leq \eps'$, where $\eps' = O((M\eps)^2)$.
 We have thus arrived at our final standing assumptions:
\begin{align}\label{eq:stand_ass3}
&\| A_i- B_i \|_{\HS}^2 \leq \eps'\;,\quad B_i^2 = 1\;,\quad [B_i, B_j]=0\quad \text{for all $i,j\in F_M$ and } \\
& \| T^*B_iT - B_{i+1}\|_{\HS}^2 \leq \eps'\quad \text{for all $i \in F_{M-1}$}\;, \notag
\end{align}
where the last condition follows from the fact that $T^*A_iT = A_{i+1}$ for all $i \in \Z$ and the semi-triangle inequality (Lemma \ref{lem:semitriang}).
Lemma~\ref{lem:mu-invariance} below converts this to the bound $\eps''=O(M^2\eps')=O(M^4\eps^2)$, which is at most $c\kappa^{14}$ for the choice $\eps=c\kappa^7/M^2$ after decreasing $c$ once more.

\subsection{From approximate representations to approximately invariant measures}
\label{sec:rep_to_meas_main_proof}
We will shift our focus to the projection valued measure on $X=\{0,1\}^{\Z}$ associated with the commuting family $B_i, i \in F_M$. This passage can be interpreted as the Fourier transform for $\mathbb F_2^{F_M}$, although we will not need this interpretation.

Consider the projection valued measure $\bar E:2^{\{0,1\}^{F_M}}\to \Proj(\mathcal H)$, whose values on atoms are defined by
$$\bar E_x = \prod_{i \in F_M} \frac{1}{2}\left(I+ (-1)^{x_i}B_i\right) \text{ for $x \in \{0,1\}^{F_M}$}\;.$$
That is, $\bar E_x$ is the projection onto the joint eigenspace of $B_i, i\in F_M$ corresponding to $(-1)^{x_i},  i \in F_M$.
Recall that we consider $\pi_M: X\to\{0,1\}^{F_M}$ as a quotient via the projection map.
Note that the shift-invariant closed subset $X_{\per}(2M+1) \subset X$ is a section for this map.
Thus, we may pushforward $\bar E$ to a projection valued measure $E$ on $X_{\per}(2M+1)$, and extend it trivially to $X$. 
Observe that we have
$$E_{\cyl{x}} = \bar E_x \text{ for all $x \in \{0,1\}^{F_M}$}\;,$$
since every $x \in \{0,1\}^{F_M}$ has a unique $2M+1$-periodic  extension to $X$. Thus,  we also have:
$$ E_b =  \bar E_{\pi_M(b)} \text{ for all $M$-definable $b \subset X$}\;.$$
Finally, consider the probability measure $\mu$ on the full shift space $X$ defined by $\mu(b) = \tr(E_b)$ for all Borel $b \subset X$ (as in Lemma \ref{lem:PVM-to-meas}).
Our next task is to show that $E$ is an \emph{approximately equivariant PVM} (Definition \ref{def:approx_equiv_PVM}), and $\mu$ is an \emph{approximately invariant measure} (Definition \ref{def:approx_inv_meas}) under the full shift.

\begin{lemma}[Approximate equivariance of $E$ and invariance of $\mu$]\label{lem:mu-invariance}
The projection valued measure $E$ is $(M-1,\eps'')$-equivariant with respect to $T$, that is:
\begin{align}
   \sum_{x\in\{0,1\}^{F_{M-1}}} \big\| T^*  E_{\cyl{x}} T - E_{L\cyl{x}}\big\|_{\HS}^2 &\leq \eps'' \;, \label{eq:clb-1}
\end{align}
where $\eps'' = C(M^2\eps') = C(M^4 \eps^2)$ for a universal constant $C$.
Furthermore, the  measure $\mu$  is $(M-1, \eps'')$-invariant.
\end{lemma}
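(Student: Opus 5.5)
The plan is to compute both sides of \eqref{eq:clb-1} explicitly as products of commuting spectral projections, and then compare the two products by a telescoping (hybrid) argument. The key point is that the sum over the $2^{2M-1}$ strings $x$ must not cost an exponential factor; this comes from the fact that partial products form resolutions of the identity.

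\textbf{Step 1: explicit formulas.} Fix $x\in\{0,1\}^{F_{M-1}}$. Since $\cyl{x}$ is $M$-definable, $E_{\cyl{x}}=\bar E_{\pi_M(\cyl{x})}$. Summing the atoms over the free coordinates $\pm M$ gives
\[
E_{\cyl{x}}=\prod_{i\in F_{M-1}}\tfrac12\big(I+(-1)^{x_i}B_i\big),
\qquad
T^*E_{\cyl{x}}T=\prod_{i\in F_{M-1}}\tfrac12\big(I+(-1)^{x_i}B'_i\big),
\]
where $B'_i=T^*B_iT$. The $B'_i$ again pairwise commute and square to $1$. Since $(Lx)_i=x_{i-1}$, the set $L\cyl{x}$ is the cylinder on the coordinates $[-M+2,M]\subset F_M$ prescribing the value $x_i$ at coordinate $i+1$. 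Hence it is again $M$-definable, and
\[
E_{L\cyl{x}}=\prod_{i\in F_{M-1}}\tfrac12\big(I+(-1)^{x_i}B_{i+1}\big).
\]

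\textbf{Step 2: telescoping.} Order $F_{M-1}$ as $i_1<\dots<i_{m}$, with $m=2M-1$. For $0\le k\le m$, let $H^{(k)}_x$ be the product in which the first $k$ factors use $B'_{i}$ and the remaining ones use $B_{i+1}$, keeping the factors in this order. Write
\[
P^{<k}_a=\prod_{l<k}\tfrac12\big(I+(-1)^{a_l}B'_{i_l}\big),\qquad
Q^{>k}_b=\prod_{l>k}\tfrac12\big(I+(-1)^{b_l}B_{i_l+1}\big).
\]
Then
\[
H^{(k)}_x-H^{(k-1)}_x=\pm P^{<k}_{a}\,D_k\,Q^{>k}_{b},
\qquad D_k=\tfrac12\big(B'_{i_k}-B_{i_k+1}\big),
\]
where $a,b$ are the restrictions of $x$ to the coordinates before and after $i_k$. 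By the semi-triangle inequality (Lemma~\ref{lem:semitriang}),
\[
\sum_x\|T^*E_{\cyl{x}}T-E_{L\cyl{x}}\|_{\HS}^2\le m\sum_{k=1}^m\sum_x\|P^{<k}_aD_kQ^{>k}_b\|_{\HS}^2 .
\]

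\textbf{Step 3: no exponential loss.} This is the main point. The families $\{P^{<k}_a\}_a$ and $\{Q^{>k}_b\}_b$ each consist of pairwise orthogonal projections summing to $I$, since they are joint spectral projections of commuting involutions. Using cyclicity of the trace,
\[
\sum_{a,b}\|P_aDQ_b\|_{\HS}^2=\sum_{a,b}\tr\big(Q_bD^*P_aDQ_b\big)=\sum_b\tr\big(D^*DQ_b\big)=\|D\|_{\HS}^2 .
\]
The coordinate $x_{i_k}$ only affects the sign of the difference, so it contributes a factor of $2$. Therefore
\[
\sum_x\|P^{<k}_aD_kQ^{>k}_b\|_{\HS}^2=2\|D_k\|_{\HS}^2=\tfrac12\|T^*B_{i_k}T-B_{i_k+1}\|_{\HS}^2\le\tfrac12\eps',
\]
by the last condition in \eqref{eq:stand_ass3}, which applies since $i_k\in F_{M-1}$. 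Summing over $k$ gives the bound $m^2\eps'/2=O(M^2\eps')$. This is $O(M^4\eps^2)$ because $\eps'=O((M\eps)^2)$, which proves \eqref{eq:clb-1}.

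The furthermore statement, that $\mu$ is $(M-1,\eps'')$-invariant, then follows directly from Lemma~\ref{lem:PVM-to-meas}. The only delicate step is Step~3: one must check that the hybrid factors split as a product of a resolution of the identity on the left, a single difference in the middle, and a resolution of the identity on the right. This works because within each side the projections commute, even though $B'$ and $B_{\cdot+1}$ do not commute with each other.
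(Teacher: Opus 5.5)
Your proof is correct and follows the paper's argument essentially step for step. The paper uses the same explicit product formulas for $E_{\cyl{x}}$ and $E_{L\cyl{x}}$, the same hybrid telescoping (factors $T^*B_iT$ before position $j$, factors $B_{i+1}$ after it) with the semi-triangle inequality, and the same resolution-of-the-identity summation over $x$ to avoid an exponential loss; you simply spell out the paper's ``PVM and Pythagoras'' step as an explicit trace computation, and then deduce the invariance of $\mu$ from Lemma~\ref{lem:PVM-to-meas} as the paper does.
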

\begin{proof}
    Start by observing that for $x \in \{0,1\}^{F_{M-1}}$, we have $L\cyl{x} = \cyl{b}$, where $b \subset \{0,1\}^{F_M}$ is the subset of  sequences whose $[-M,-M+1]$ coordinates are arbitrary, and the $[-M+2,  M]$ coordinates are the right shift of $x$. 
    It follows that:
    \begin{align*}
    E_{\cyl{x}} &= \bar E_{\pi_M(\cyl{x})} = \prod_{i \in F_{M-1}} \frac{1}{2}\left(I+ (-1)^{x_i}B_i\right)\;, \\ 
    E_{L\cyl{x}} &= \bar E_{b}= \prod_{i \in F_{M-1}} \frac{1}{2}\left(I+ (-1)^{x_{i}}B_{i+1}\right)\;.
    \end{align*}
    Now, by a telescoping bound using the semi-triangle inequality (Lemma \ref{lem:semitriang}):
    \begin{align*}
       \big\| T^*  E_{\cyl{x}} T - E_{L\cyl{x}}\big\|_{\HS}^2  \leq O(M) \sum_{j \in F_{M-1}}\left\| H_{<j} (T^*B_j T - B_{j+1}) H_{>j}\right\|^2_{\HS}\;,
    \end{align*}
where $H_{<j} = \prod_{i=-M+1}^{j-1}\frac{1}{2}(I+(-1)^{x_i}T^*B_iT)$ and $H_{>j} = \prod_{i=j+1}^{M-1}\frac{1}{2}(I+(-1)^{x_i}B_{i+1})$.
Summing over all $x \in \{0,1\}^{F_{M-1}}$, then changing order of summation and using the fact that $E$ is a PVM and Pythagoras, we obtain:
    \begin{align*}
        \sum_{x\in\{0,1\}^{F_{M-1}}} \big\| T^*  E_{\cyl{x}} T - E_{L\cyl{x}}\big\|_{\HS}^2  &= O(M) \sum_{j \in F_{M-1}}\left\|  (T^*B_j T - B_{j+1}) \right\|^2_{\HS} \\
        &= O(M^2\epsilon') \;,
    \end{align*}
    where in the final line we use the standing assumption~\eqref{eq:stand_ass3}.
    The approximate invariance of $\mu$ then follows from Lemma \ref{lem:PVM-to-meas}.
    \end{proof}

\subsection{Tower decompositions for approximately invariant measures}
\label{sec:tow_decomp_main_proof}
As was mentioned in the introduction, the cornerstone of our proof relies on a tower decomposition type result for the full shift.
This is inspired by classical Kakutani-Rokhlin decomposition lemmas. 
Informally, given an "approximately invariant" measure $\mu$ on $X = \{0,1\}^{\mathbb{Z}}$, the proposition provides a partition of $X$ into clopen towers of the full shift up to small error, with a bound on the complexity of the clopen sets appearing in these towers, such that moreover each tower is either approximately invariant or sufficiently high. 
Additional difficulties not appearing in classical considerations are the presence of periodic points, bounding the complexity of the constructed towers, as well as having only \emph{approximately invariant} measures.



\begin{proposition}[Tower decomposition]\label{prop:decomp}
Let $t\in\mathbb{N}$ and $0<\upsilon, \delta, \eta \leq 1/2$ be parameters. 
Let $\ell_0=\lceil C_1 t\log(1/\upsilon)/\delta\rceil$ and let
\[
    M_0=C_0 t\big(t\ell_0+t^3+t\log^*(\ell_0+2)\big),
\]
where $C_0,C_1>0$ are universal constants specified in the proof.
Let $\mu$ be a probability measure on $X = \{0,1\}^{\mathbb{Z}}$ which is $(M_0,\eta)$-invariant. 
Then there exists an $M_0$-definable subset $e\subseteq X$ with $\mu(e) = O(t^6(\upsilon+\delta+\eta))$ and a partition of $X\backslash e$ into a finite collection $\mathscr T$ of disjoint clopen towers such that:
\begin{enumerate}
\item  \textbf{(Complexity upper bound)} Each tower appearing in $\mathscr T$ is $M_0$-defined.
\item \textbf{(Height versus closedness dichotomy)} For each tower $\Tow(b,j)$ appearing in $\mathscr T$, either $j < t$ and $\Tow(b,j)$ is $\delta$-closed or $t\leq j < 6t+1$.
\item \textbf{(Complexity lower bound)} For each tower $\Tow(b,j)$ appearing in $\mathscr T$, the set $\pi_{\max(j,t)}(b)$ is a singleton.
\end{enumerate}
\end{proposition}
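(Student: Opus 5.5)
The proof splits $X$ into an approximately periodic part and an approximately aperiodic part, and handles them by different mechanisms.

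\textbf{Approximately periodic part.} For $p<t$, call $x$ \emph{$(p,\ell_0)$-periodic} if $x_{i+p}=x_i$ on a window of length about $t\ell_0$ around the origin. This is a condition on finitely many coordinates, so the set of such points is clopen and $O(t\ell_0)$-definable. Let $P_p$ be the set of points whose minimal such period is $p$.

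For each word $w\in\{0,1\}^{F_p}$ of exact minimal period $p$, take the base $b_w$ to be the points of $P_p$ whose $F_p$-pattern is $w$. Completing $w$ to its periodic point $\bar w\in X_{\per}(p)$ (the ``orbit closing'' step), the window condition forces each $x\in b_w$ to agree with $\bar w$ on a long window. Hence $b_w, Lb_w,\dots,L^{p-1}b_w$ are pairwise disjoint, because $w$ has exact period $p$, and $\pi_{t}(b_w)$ is a singleton. This gives property (iii) for $j=p<t$.

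$\delta$-closedness means $\mu(L^pb_w\cap b_w)\ge(1-\delta)\mu(b_w)$. A point of $b_w$ whose image under $L^p$ leaves $b_w$ must lie near the end of its periodic window. If we choose the window so that membership in $b_w$ depends on where the window ends, this measure can be compared with the window boundary via approximate invariance. I would try to get this ratio $\le\delta$ from the choice $\ell_0\gtrsim t\log(1/\upsilon)/\delta$, using a pigeonhole over window lengths $\ell\in[\ell_0/2,\ell_0]$: some window length has boundary mass at most $\delta$ times the bulk. The $\log(1/\upsilon)$ factor accounts for a geometric choice of scales, and the scales excluded by the pigeonhole are charged to $e$ with cost $\upsilon$. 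The paper remarks that this step is purely symbolic, so I expect the needed bound to come from the pigeonhole rather than from invariance. Points that fall into no clean tower go into $e$.

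\textbf{Approximately aperiodic part.} On the complement $Y$, which has no period $<t$ in the relevant window, I would invoke the efficient marker construction of Proposition~\ref{prop:marker_general}. This runs Linial colour reduction, costing $\log^*$ rounds, followed by a greedy maximal independent set in the Schreier graph of $L^{\pm1},\dots,L^{\pm 3t}$ restricted to $Y$. The output is a clopen $Z$ of complexity $O(t^3+t\log^*\ell_0)$ that is $F_{3t}$-independent and whose return times lie in $[3t,6t]$ on $Y$.

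The towers come from Kakutani--Rokhlin return times. The bases are $b_{j}=\{z\in Z: \text{first return to } Z \text{ is at time } j\}$, further refined by the pattern on $F_{\max(j,t)}$ to obtain (iii). Each tower has height $t\le j<6t+1$, and each level is $M_0$-definable, since $L^i$ of a $k$-defined set is $(k+i)$-defined; this accounts for the factor $t$ in $M_0$.

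\textbf{Error bound.} The mass lost consists of points of $Y$ whose orbit segment leaves $Y$ before returning to $Z$, namely boundary points near the periodic region. Each tower level $L^ib$ is compared with $b$ through $\mu$-invariance up to $\eta$ per step, using $(M_0,\eta)$-invariance $O(t)$ times. Together with a union over $O(t)$ heights and $O(t)$ periods, this gives polynomial factors of order $t^6(\upsilon+\delta+\eta)$.

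\textbf{Main obstacle.} I expect the hardest step to be the $\delta$-closedness of the short towers with only approximate invariance. The measure of points escaping a periodic window must be controlled relative to $\mu(b_w)$, not in absolute terms. The pigeonhole over window lengths only gives absolute control, so bases with small mass whose ratio fails would have to be thrown into $e$ and absorbed by $\delta$ after summing over $w$.
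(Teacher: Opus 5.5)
Your overall architecture (orbit-closing towers on the periodic part, efficient markers with Kakutani--Rokhlin return times on the rest, refining bases for (iii)) matches the paper's. However, the step you flag as the main obstacle is a genuine gap, and your proposed fix does not close it. With one window length shared by all periodic words, a pigeonhole over window lengths controls only the \emph{total} escaping mass $\sum_w\mu(b_w\setminus L^pb_w)$. Property (ii), by contrast, requires $\mu(L^pb\cap b)\ge(1-\delta)\mu(b)$ for \emph{each} short tower. Discarding the bases where the ratio fails loses a factor $\delta^{-1}$ (Markov), so you would need the total escaping mass to be $O(\delta^2)$ up to powers of $t$. A pigeonhole over $R$ window lengths only gives about $t/R$, which forces $R$ of order $\delta^{-2}$; the $\delta^{-1}\log(1/\upsilon)$ scaling of $\ell_0$ and $M_0$ does not allow this. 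Approximate invariance plays no role at this step.

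The missing idea is to choose the window \emph{separately for each word}, using nestedness of the cylinders of the minimal periodic extension. For $x\in\{0,1\}^{F_{3t}}$ with $p=\per(x)\le t$ one has $\cyl{x^{k+p}}\subseteq L^p\cyl{x^k}\cap\cyl{x^k}$. Hence if $\Tow(\cyl{x^k},p)$ is not $\delta$-closed, then $\mu(\cyl{x^{k+p}})<(1-\delta)\,\mu(\cyl{x^k})$. Step through $k=3t,3t+p,3t+2p,\dots$ for $\Theta(\log(1/\upsilon)/\delta)$ steps. Either some $\ell_x=O(t\log(1/\upsilon)/\delta)$ gives a $\delta$-closed tower with base $\cyl{x^{\ell_x}}$, or $\mu(\cyl{x^{\ell_x}})\le\upsilon\,\mu(\cyl{x})$ and this cylinder goes into $e$. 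Summed over $x$, the loss is at most $\upsilon$ (Claim~\ref{claim:62}). Your ``geometric choice of scales'' is the right intuition, but it must be run per word; that is exactly what turns aggregate control into per-tower control. You also need to keep one word per shift orbit, because your towers for $w$ and for a cyclic shift of $w$ overlap (the paper selects greedily, using Lemma~\ref{lem:per_ext_properties}(iii)). Finally, take $\ell\ge\max_x\ell_x$ so that every point of $X^\ell_{\per}(t)$ is covered.

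The same closedness is what controls the aperiodic error, and your error paragraph does not supply it. The $(M_0,\eta)$-invariance of $\mu$ alone does not bound the mass of points of $Y$ whose return segment runs into the periodic region. In the paper, the periodic region is a union of $\delta$-closed towers plus a set of mass $<\upsilon$, hence approximately $L$-invariant by Lemma~\ref{lem:approx_inv_measures}(ii). Since $Z\subseteq\Xi_{\aper}$ avoids it, the translates $L^kZ$ with $|k|\le 2t$ meet it only in mass $O(t^3\delta+t^4\eta+\upsilon)$ (Lemma~\ref{lem:complement-bound}). Keeping only the part $W$ of $Z$ whose $F_t$-neighbourhood stays inside $\Xi_{\aper}$ then yields the $O(t^6(\upsilon+\delta+\eta))$ bound.

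Finally, match the marker scale to the aperiodicity you actually have. Your $Y$ excludes only periods $<t$, so a point of minimal period $2t$ gives a loop in the $F_{3t}$ Schreier graph. Then $\pi_\ell$ is not $F_{3t}$-proper on $Y$, and no $F_{3t}$-independent set saturates that orbit. Put periods $\le t$ into the periodic part and use $F_t$-markers, as the paper does; return times then lie in $[t+1,2t+1]$.
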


The crucial conclusion appearing above is the second condition, which asserts that each tower in the decomposition is either sufficiently high (has height at least $t$), or is sufficiently periodic. 
The proposition is proven in Section~\ref{sec:decomp}.

In our setting, since the measure $\mu$ induced by the PVM $E$ is $(M-1, \eps'')$ invariant by Lemma \ref{lem:mu-invariance}, we can apply Proposition \ref{prop:decomp} with parameters $t, \upsilon, \delta$ to be chosen later so that $M-1>M_0$, where $M_0$ is specified in Proposition \ref{prop:decomp}, and $\eta = \eps''$.
From here on, we fix the resulting tower decomposition $\mathscr T$.

\subsection{From tower decompositions to true representations}
\label{sec:tow_to_rep_main_proof}
With the decomposition $\mathscr T$ of the shift space $X$ to disjoint clopen towers at hand, we  can construct approximate closed projection towers (in the sense of Definition \ref{def:approx-proj-tower}) using the projection valued measure $E$:
For a clopen tower $\Tow(b,j) \in \mathscr T$, the sequence $(E_b,\dots, E_{L^{j-1}b}; T)$ will be an approximate closed projection tower.
The fact that the constructed towers are approximately closed will follow from the height versus closedness dichotomy guaranteed by Proposition \ref{prop:decomp}.
Then, by applying stability for approximate closed projection towers (Lemma \ref{lem:lowd}), we will be able to construct a nearby representation of the lamplighter group.

We start by quantifying how much $(E_b,\dots, E_{L^{j-1}b}; T)$ behaves like an approximate tower, by the following lemma.

\begin{lemma}\label{lem:clb}
For any tower $\tau=\Tow(b,j) \in \mathscr T$ let 
\begin{equation}\label{eq:def-deltab}
\delta^\tau_1 \,=\, \sum_{i=0}^{j-1} \big\| T^*  E_{L^i b} T - E_{L^{i+1}b} \big\|_{\HS}^2 \;.
\end{equation}
Then we have
\begin{equation}\label{eq:clb-0b}
\sum_{\tau \in\mathscr T}\delta^\tau_1 = O(\eps'')\;.
\end{equation}
\end{lemma}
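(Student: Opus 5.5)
The plan is to reduce the sum over all towers and levels to the single approximate equivariance bound \eqref{eq:clb-1} of Lemma~\ref{lem:mu-invariance}, which is stated for cylinder sets $\cyl{x}$ with $x\in\{0,1\}^{F_{M-1}}$. Each level $L^ib$ of a tower $\tau=\Tow(b,j)\in\mathscr T$, for $0\le i\le j-1$, is $M_0$-defined, and $M_0< M-1$. So each level is a disjoint union of cylinders, $L^ib=\bigsqcup_{x\in S_{\tau,i}}\cyl{x}$, for some set $S_{\tau,i}\subset\{0,1\}^{F_{M-1}}$. Since $\mathscr T$ is a partition of $X\setminus e$ into towers, the sets $L^ib$ (over all $\tau$ and $0\le i\le j-1$) are pairwise disjoint. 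Hence the index sets $S_{\tau,i}$ are pairwise disjoint subsets of $\{0,1\}^{F_{M-1}}$.

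By additivity of $E$ and $L$ being a bijection, $E_{L^{i+1}b}=\sum_{x\in S_{\tau,i}}E_{L\cyl{x}}$ and $E_{L^ib}=\sum_{x\in S_{\tau,i}}E_{\cyl{x}}$. Therefore
\[
T^*E_{L^ib}T-E_{L^{i+1}b}=\sum_{x\in S_{\tau,i}}D_x,\qquad D_x:=T^*E_{\cyl{x}}T-E_{L\cyl{x}}.
\]
The main point is to avoid the lossy semi-triangle inequality here, since $|S_{\tau,i}|$ can be exponential in $M$. I would use an orthogonality argument instead. Write $D_x=F_x-G_x$ with $F_x=T^*E_{\cyl{x}}T$ and $G_x=E_{L\cyl{x}}$. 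The family $\{F_x\}$ consists of pairwise orthogonal projections, and so does $\{G_x\}$. Expanding,
\[
\Big\|\sum_x D_x\Big\|_{\HS}^2=\sum_{x,y}\tr(D_x^*D_y).
\]
For $x\ne y$ we have $\tr(F_xF_y)=0=\tr(G_xG_y)$, so the cross terms equal $-\tr(F_xG_y)-\tr(G_xF_y)\le 0$, because traces of products of two positive operators are nonnegative. Hence $\|\sum_x D_x\|_{\HS}^2\le\sum_x\|D_x\|_{\HS}^2$. This is the step where care is needed, and I expect it to be the main obstacle.

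Summing over $i$ and $\tau$ and using disjointness of the $S_{\tau,i}$ gives
\[
\sum_\tau\delta_1^\tau\le\sum_{x\in\{0,1\}^{F_{M-1}}}\|D_x\|_{\HS}^2\le\eps''
\]
by \eqref{eq:clb-1}. The index $i=j-1$ is included in this count: the term $E_{L^jb}$ is still $E$ of a union of the cylinders $L\cyl{x}$, so the same argument applies. It does not need $L^jb$ to be a level of the tower. This yields \eqref{eq:clb-0b}, indeed with constant $1$.
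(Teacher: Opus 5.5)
Your proof is correct and follows the paper's route: decompose each level $L^ib$ into $(M-1)$-cylinders whose index sets are pairwise disjoint across all towers and levels, then reduce to the equivariance bound \eqref{eq:clb-1} of Lemma~\ref{lem:mu-invariance}. The only difference is the orthogonality step: the paper invokes Lemma~\ref{lem:p-ortho}, which splits $P_x-Q_x=P_x(P_x-Q_x)+(P_x-Q_x)Q_x$ and loses a factor $4$, whereas you note directly that the cross terms $-\tr(F_xG_y)-\tr(G_xF_y)$ are nonpositive, which gives the same conclusion with constant $1$.
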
 

\begin{proof}
   Since the sets $L^i b$, where $\Tow(b,j) \in \mathscr T, i < j$, are all pairwise disjoint and $(M-1)$-defined, the sets $\pi_{M-1}(L^ib) \subset \{0,1\}^{F_{M-1}}$ are all pairwise disjoint as well.
   Therefore we have:
   \begin{align*}
    \sum_{\tau=\Tow(b,j) \in \mathscr T}\delta^\tau_1 &= \sum_{\Tow(b,j) \in \mathscr T}\sum_{i=0}^{j-1} \big\| T^*  E_{L^i b} T - E_{L^{i+1}b} \big\|_{\HS}^2 \\
        &\leq 4\sum_{\Tow(b,j) \in \mathscr T}\sum_{i=0}^{j-1}\sum_{x\in \pi_{M-1}(L^i b)} \big\| T^*  E_{\cyl{x}} T - E_{L\cyl{x}} \big\|_{\HS}^2 \\
        &\leq 4 \sum_{x\in\{0,1\}^{F_{M-1}}} \big\| T^*  E_{\cyl{x}} T - E_{L\cyl{x}} \big\|_{\HS}^2 \;,
   \end{align*}
where we applied Lemma~\ref{lem:p-ortho} below in the second line.
Together with approximate equivariance invariance of $E$ (Lemma~\ref{lem:mu-invariance}), this proves the lemma. 
\end{proof}

\begin{lemma}\label{lem:p-ortho}
Suppose $\{ P_x \}_x$ (resp.\ $\{ Q_x \}_x$) is a collection of pairwise orthogonal projections indexed by elements $x\in\mathcal{X}$, for some finite set $\mathcal{X}$. Let $b_1,\ldots,b_t \subseteq \mathcal{X}$ be disjoint subsets. Then 
\begin{equation}
     \sum_i \Big\| \sum_{x\in b_i} P_x - \sum_{x\in b_i} Q_x \Big\|_{\HS}^2 \leq 4 \sum_x \big\|P_x-Q_x\big\|_{\HS}^2\;.
\end{equation}
\end{lemma}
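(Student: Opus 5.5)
Fix $i$ and write $P=\sum_{x\in b_i}P_x$ and $Q=\sum_{x\in b_i}Q_x$. The plan is to prove the blockwise inequality
\[
\|P-Q\|_{\HS}^2 \leq 4\sum_{x\in b_i}\|P_x-Q_x\|_{\HS}^2 .
\]
Summing this over $i$ and using that the $b_i$ are disjoint then bounds the total by $4\sum_{x\in\mathcal X}\|P_x-Q_x\|_{\HS}^2$. The only structure available is pairwise orthogonality within each family. So we want to expand $P-Q$ in a way that turns cross terms into orthogonal pieces. The naive semi-triangle inequality (Lemma~\ref{lem:semitriang}) would cost a factor $|b_i|$, which is exactly what must be avoided.

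First, write $P-Q=P(P-Q)+(I-P)(P-Q)$. Since $P$ and $I-P$ are complementary projections, Pythagoras gives $\|P-Q\|_{\HS}^2=\|P(I-Q)\|_{\HS}^2+\|(I-P)Q\|_{\HS}^2$.

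For the first term, use $P=\sum_x P_x$ with the $P_x$ pairwise orthogonal. The vectors $P_x(I-Q)$ have orthogonal ranges, so
\[
\|P(I-Q)\|_{\HS}^2=\sum_{x\in b_i}\|P_x(I-Q)\|_{\HS}^2 .
\]
Next use $I-Q\leq I-Q_x$, which holds because $Q_x\leq Q$. For a projection $R\leq R'$ we have $\|AR\|_{\HS}^2=\tr(ARA^*)\leq\tr(AR'A^*)$. This gives
\[
\|P_x(I-Q)\|_{\HS}^2\leq\|P_x(I-Q_x)\|_{\HS}^2=\|P_x(P_x-Q_x)\|_{\HS}^2\leq\|P_x-Q_x\|_{\HS}^2 .
\]
The second term is handled symmetrically: write $Q=\sum_x Q_x$, use the orthogonality of the $Q_x$, and bound $I-P\leq I-P_x$ to get $\|(I-P)Q\|_{\HS}^2\leq\sum_x\|P_x-Q_x\|_{\HS}^2$.

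Adding the two terms gives the blockwise bound with constant $2$, which is better than the claimed constant $4$. The only delicate step is ordering the two operations so that no factor of $|b_i|$ appears. Orthogonality must be used before comparing $I-Q$ with $I-Q_x$, and we should check that the projection inequality $Q_x\leq Q$ is applied on the correct side of the product.
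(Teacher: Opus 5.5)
Your proof is correct, and it gives the sharper constant $2$. It differs from the paper's argument in where the splitting happens.

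The paper splits each summand individually, $P_x-Q_x=P_x(P_x-Q_x)+(P_x-Q_x)Q_x$. It then applies the semi-triangle inequality (Lemma~\ref{lem:semitriang}) to the two resulting sums, at a cost of a factor $2$. Each sum collapses by orthogonality: the $P_x(P_x-Q_x)$ have orthogonal ranges and the $(P_x-Q_x)Q_x$ have orthogonal right supports. Finally each piece is bounded by $\|P_x-Q_x\|_{\HS}$, so the two terms together give $2\sum_x\|P_x-Q_x\|_{\HS}^2$ and the total constant is $4$. This route needs nothing beyond $\|P_xA\|_{\HS},\|AQ_x\|_{\HS}\le\|A\|_{\HS}$.

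You instead split at the block level with the complementary projections $P$ and $I-P$, so your first step is an exact Pythagorean identity and loses no factor. The price is that your pieces $P_x(I-Q)$ and $(I-P)Q_x$ are not yet per-index quantities. You therefore need the extra operator-monotonicity step $I-Q\le I-Q_x$ (valid because $x\in b_i$) to reach $\|P_x(I-Q_x)\|_{\HS}=\|P_x(P_x-Q_x)\|_{\HS}$, and symmetrically on the $Q$ side.

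If you want the best constant, expanding traces gives even $1$:
$\|P-Q\|_{\HS}^2=\tr(P)+\tr(Q)-2\sum_{x,y\in b_i}\tr(P_xQ_y)\le\sum_{x\in b_i}\|P_x-Q_x\|_{\HS}^2$,
since the off-diagonal terms $\tr(P_xQ_y)=\tr(P_xQ_yP_x)$ are nonnegative. For the application in Lemma~\ref{lem:clb}, only a universal constant matters, so any of these arguments suffices.
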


\begin{proof}
Write
\begin{align*}
    \sum_i \Big\| \sum_{x\in b_i} P_x - \sum_{x\in b_i} Q_x \Big\|_{\HS}^2 
    &=     \sum_i \Big\| \sum_{x\in b_i} P_x(P_x-Q_x) + \sum_{x\in b_i} (P_x-Q_x)Q_x \Big\|_{\HS}^2 \\
    &\leq 2    \Big( \sum_i \Big\| \sum_{x\in b_i} P_x(P_x-Q_x) \Big\|_{\HS}^2 + \sum_i \Big\| \sum_{x\in b_i} (P_x-Q_x)Q_x \Big\|_{\HS}^2\Big) \\
    &\leq  2    \Big( \sum_x \big\|P_x-Q_x \Big\|_{\HS}^2 + \sum_x \big\| P_x-Q_x \big\|_{\HS}^2\Big) \;.
\end{align*}
Here for the first line we used that $P_x, Q_x$ are projections, for the second line Lemma \ref{lem:semitriang}, and for the third line that the $P_x$ and $Q_x$ are pairwise orthogonal respectively and $0\leq P_x,Q_x \leq I$.
\end{proof}

The following claim will be crucial to construct a lamplighter representation out of a tower $\tau \in \mathscr T$, by identifying an element $x^\tau \in \{0,1\}^{[0,j-1]}$ which is suitable for Lemma \ref{lem:tower_to_rep}.

\begin{claim}\label{claim:b-B}
Let $\tau = \Tow(b,j) \in  \mathscr T$, then for every $0 \leq i \leq j-1$ we have 
$$ B_0 E_{L^ib} = (-1)^{x_i^\tau} E_{L^ib}\;,$$
for some $x_i^\tau \in \{0,1\}$.
\end{claim}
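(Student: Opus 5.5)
The plan is to use the complexity lower bound, item (iii) of Proposition~\ref{prop:decomp}, together with the explicit form of $E$ on $M$-definable sets. For $\tau=\Tow(b,j)\in\mathscr T$, item (iii) says that $\pi_{\max(j,t)}(b)$ is a singleton. In particular, every point of $b$ has the same coordinates on $F_{\max(j,t)}\supseteq[-(j-1),0]$. Set $x_i^\tau := y_{-i}$, where $y$ is the common value on these coordinates. This choice comes from the following computation: for $z\in L^i b$ we have $z=L^i w$ with $w\in b$, so $z_0=w_{-i}$, and $-i\in F_{j-1}\subseteq F_{\max(j,t)}$ for $0\le i\le j-1$. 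Hence $L^ib\subseteq \{z\in X: z_0=x^\tau_i\}=:c_i$, which is a $0$-defined cylinder set.

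Next I translate this set inclusion into an operator identity. All sets $L^ib$ are $M_0$-defined by item (i), and $M_0<M-1$, so $E_{L^ib}=\bar E_{\pi_M(L^ib)}$. This projection is a sum of atoms $\bar E_x$ with $x\in\pi_M(L^ib)$, and every such $x$ satisfies $x_0=x_i^\tau$. For each atom, the definition of $\bar E_x$ as a product of commuting spectral projections gives
\[
B_0\bar E_x=B_0\tfrac12\big(I+(-1)^{x_0}B_0\big)\prod_{k\neq 0}\tfrac12\big(I+(-1)^{x_k}B_k\big)=(-1)^{x_0}\bar E_x ,
\]
using $B_0^2=1$ and the fact that the $B_k$ commute, both from \eqref{eq:stand_ass3}. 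Summing over the atoms then gives $B_0E_{L^ib}=(-1)^{x^\tau_i}E_{L^ib}$.

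I expect the main obstacle to be the bookkeeping in the first step. One has to check the sign convention of the right shift $L$, namely that $(L^iw)_0=w_{-i}$, and confirm that the window $F_{\max(j,t)}$ given by item (iii) really contains every index $-i$ for $0\le i\le j-1$. It does, since $i\le j-1<\max(j,t)$. One also has to make sure that $E$ evaluated on $M$-definable sets agrees with $\bar E$ of their projections, which holds because $M_0<M$.
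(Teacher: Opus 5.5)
Your proposal is correct and follows essentially the same route as the paper: item (iii) of Proposition~\ref{prop:decomp} fixes the coordinates of $b$ on $[-(j-1),0]$, so $L^ib$ lies in the cylinder $\{z_0=x_i^\tau\}$, and therefore $E_{L^ib}$ lies in the corresponding eigenspace of $B_0$. The only cosmetic difference is that you check the final step by summing over the atoms $\bar E_x$, whereas the paper simply uses monotonicity of $E$ under the inclusion $L^ib\subseteq L^i\cyl{\pi_j(b)}$.
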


\begin{proof}
By the complexity lower bound guarantee in Proposition \ref{prop:decomp}, we have that $\pi_j(b)$ is a singleton.
Thus, for any $0\leq i< j$ all elements of  $\cyl{\pi_j(b)}$ have the same $(-i)$-th coordinate.
Thus, all elements of $L^i \cyl{\pi_j(b)}$ have the same $0$-th coordinate, denote it by $x_i^\tau \in \{0,1\}$.
Consequently, $E_{L^i \cyl{\pi_j(b)}}$ is included in the $(-1)^{x_i^\tau}$-eigenspace of $B_{0}$.
Since  $b\subseteq \cyl{\pi_j(b)}$, the same is true for $E_{L^i b}$, as claimed. 
\end{proof}

To quantify how approximately closed the projection towers constructed are, we will divide according to the Height versus closeness dichotomy in Proposition \ref{prop:decomp}.
To this end, denote by $\mathscr T_{< t} \subset \mathscr T$ the subset of towers of height less than $t$ ("short" towers), and $\mathscr T_{\geq t}$ the subset of towers of height at least $t$ ("high" towers).
We start with constructing representations from high towers.
Going back to the observation pointed out after Definition \ref{def:approx-proj-tower}, the approximate projection towers coming from high towers will automatically be approximately closed, a crucial fact which will be used in the following Lemma.

\begin{lemma}[Representations from high towers]\label{lem:tower-long}
Let $\tau = \Tow(b,j) \in \mathscr T_{\geq t}$, and let $P_i = E_{L^ib}$ for $0\leq i <j$.
Abuse notation and also denote $\tau = (P_0, \dots, P_{j-1}; T)$.
Then $\tau$ is an approximate $(\delta^\tau_1,\delta_2)$-closed projection tower, where $\delta_2 = O(\tr(P_\tau)/t + j\cdot \delta^\tau_1)$.
Further, there exists a unitary representation $\rho_{\tau } : \Gamma \to U(P_\tau \mathcal H)$ such that:

\begin{align*}
\| P_\tau B_0 P_{\tau} - \rho_{\tau}(a) \|_{\HS}^2 &= O(j\delta^\tau_1),\\
    \| P_\tau T P_\tau- \rho_{\tau}(t^{-1})\|_{\HS}^2 & = O(j^2 \delta^\tau_1 + \tr(P_\tau)/t).
\end{align*}
\end{lemma}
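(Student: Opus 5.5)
The proof has three steps: check the approximate tower conditions, bound the closing error, and then combine Lemma~\ref{lem:lowd} with Lemma~\ref{lem:tower_to_rep}.

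\emph{Step 1: the approximate tower conditions.} Since $b, Lb,\dots,L^{j-1}b$ are pairwise disjoint, the projections $P_i=E_{L^ib}$ are pairwise orthogonal. The sum $\sum_{i=0}^{j-2}\|T^*P_iT-P_{i+1}\|_{\HS}^2$ is bounded by $\delta_1^\tau$ directly from~\eqref{eq:def-deltab}.

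\emph{Step 2: the closing condition.} This is the main point. We use the observation made after Definition~\ref{def:approx-proj-tower}. By Lemma~\ref{lem:proj_diff_bound}, $\sum_{i\le j-2}|\tr(P_i)-\tr(P_{i+1})|\le\delta_1^\tau$. The ``furthermore'' part of Claim~\ref{claim:p-bound} then gives $|\tr(P_i)-\tr(P_\tau)/j|\le 2j\delta_1^\tau$ for every $i$. Using $\|X-Y\|_{\HS}^2\le 2\|X\|_{\HS}^2+2\|Y\|_{\HS}^2$ with $\|P\|_{\HS}^2=\tr(P)$, we get
\[
\|T^*P_{j-1}T-P_0\|_{\HS}^2\le 2\tr(P_{j-1})+2\tr(P_0)=O(\tr(P_\tau)/j+j\delta_1^\tau).
\]
Since $j\ge t$, this is $O(\tr(P_\tau)/t+j\delta_1^\tau)=\delta_2$.

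\emph{Step 3: rounding and building the representation.} Apply Lemma~\ref{lem:lowd} to get a closed tower $(P'_0,\dots,P'_{j-1};R')$ with $P'_i\le P_i$ that is $(\eps_1,\eps_2)$-close to $\tau$. The errors are
\[
\eps_1=O(j\delta_1^\tau),\qquad \eps_2=O(j^2\delta_1^\tau+\delta_2)=O(j^2\delta_1^\tau+\tr(P_\tau)/t).
\]
Since $R'$ is the identity off $P'$ and $P'\le P_\tau$, the tower is also a closed tower on $P_\tau\mathcal H$ with unitary $P_\tau R'P_\tau$. Take $x^\tau$ from Claim~\ref{claim:b-B} and set $\rho_\tau=\rho_{\tau',x^\tau}$ via Lemma~\ref{lem:tower_to_rep} on $P_\tau\mathcal H$. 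Then $\rho_\tau(t^{-1})=(T_{\tau'})^{-1}=P_\tau R'P_\tau$.

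The bound for $T$ follows from~\eqref{eq:lowd-0c}. The difference between $P_{\tau'}R'P_{\tau'}$ and $P_\tau R'P_\tau$ is the identity on $(P_\tau-P')\mathcal H$, whose squared norm is $\tr(P_\tau-P')\le\eps_1$. The difference between $P_\tau TP_\tau$ and $P_{\tau'}$-compressions is absorbed by~\eqref{eq:lowd-0c}, up to terms of order $\|P_\tau-P'\|_{\HS}^2=O(\eps_1)$ by the semi-triangle inequality.

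For $a$, Claim~\ref{claim:b-B} gives $P_\tau B_0P_\tau=\sum_i(-1)^{x_i^\tau}P_i$. Meanwhile $\rho_\tau(a)=\sum_i(-1)^{x_i^\tau}P'_i+(P_\tau-P')$. Their difference is $\sum_i\pm(P_i-P'_i)$ plus $(P_\tau-P')$, all supported on the orthogonal pieces $P_i-P'_i$. Its squared norm is therefore $O(\sum_i\tr(P_i-P'_i))=O(\eps_1)=O(j\delta_1^\tau)$.

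The only nonroutine step is the closing estimate in Step 2, which uses the height $j\ge t$. The rest is bookkeeping about working on $P_\tau\mathcal H$ rather than $\mathcal H$, since $P'$ may be strictly smaller than $P_\tau$.
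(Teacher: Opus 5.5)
Your proof is correct and follows the paper's argument essentially step for step. You bound the closing term through the near-equal traces from Claim~\ref{claim:p-bound} together with $j\ge t$, then apply Lemma~\ref{lem:lowd}, Claim~\ref{claim:b-B} and Lemma~\ref{lem:tower_to_rep} on $P_\tau\mathcal H$; you also correctly account for the extra identity block $P_\tau-P'$ in $\rho_\tau(t^{-1})$ at cost $O(\eps_1)$, which the paper absorbs without comment.
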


\begin{proof}

We start by bounding $\delta_2$. Note that by the proof of Lemma \ref{lem:lowd} (see Claim \ref{claim:p-bound}), we have $\tr(P_i) \leq \tr(P_\tau)/j+O(j\delta^\tau_1) \leq \tr(P_\tau)/t+O(j\delta^\tau_1)$ for all $i <j$. 
Consequently, we have by the semi-triangle inequality (Lemma \ref{lem:semitriang}):
\begin{align*}
    \| T^*  P_{j-1} T  - P_{0}\|_{\HS}^2 \leq 2\big(\tr(P_{j-1})+\tr(P_0)\big) \leq \tr(P_\tau)/t + O(j\delta^\tau_1)
\end{align*}
We can now apply stability for approximate closed towers (Lemma~\ref{lem:lowd}) to $\tau$, and obtain a (true) closed projection tower $(P_0', \dots, P_{j-1}';T')$ with $P_i'\leq P_i$ for all $i$,  which is $(\eps_1,\eps_2)$-close to $\tau$, where
$$\eps_1 = O(j\delta^\tau_1) \;,\quad \eps_2 = O(j^2 \delta^\tau_1 + \tr(P_\tau)/t)\;, $$
$T'\in U(\mathcal H)$ acting as identity on $I - \sum_{i=0}^{j-1} P_i'$. 
By Claim \ref{claim:b-B}, there is $x^\tau \in \{0,1\}^{[0,j-1]}$ such that
$$ B_0 P_i = (-1)^{x_i^\tau} P_i \quad\text{for all $0\leq i<j$ }. $$
Consequently, we have $P_\tau B_0P_\tau = B_0P_\tau = \sum_{i=0}^{j-1} (-1)^{x^\tau_i} P_i$.
By using Lemma \ref{lem:tower_to_rep} on the closed projection tower $(P_0',\dots,P_{j-1}' ; P_\tau T')$, treated as a closed projection tower on $P_\tau \mathcal H$, and $x^\tau$, we obtain a unitary representation $\rho_{\tau} : \Gamma \to U(P_\tau \mathcal H)$ where
\begin{align*}
    \rho_{\tau}(a) &= \sum_{i=0}^{j-1} (-1)^{x_i^\tau} P_i' + \Big(P_\tau - \sum_{i=0}^{j-1} P_i'\Big)\;, \\
    \rho_{\tau}(t)&= P_{\tau}T'^*\;.
\end{align*}
It follows by Pythagoras and the definition of $(\eps_1, \eps_2)$-closeness (see~\eqref{eq:lowd-0b}) that
\begin{align*}
    \| P_\tau B_0 P_{\tau} - \rho_{\tau}(a) \|_{\HS}^2 &= \big\| \sum_{i=0}^{j-1} ((-1)^{x_i^\tau}-1) (P_i - P_i')\big\|_{\HS}^2  + \| P_\tau - \sum_{i=0}^{j-1} P_i' \|_{\HS}^2 \\
    &= O(\eps_1) = O(j\delta^\tau_1)\;.
\end{align*}
Similarly, again by definition of $(\eps_1, \eps_2)$-closeness (see~\eqref{eq:lowd-0c}), we have
\begin{align*}
    \| P_\tau T P_\tau- \rho_{\tau}(t^{-1})\|_{\HS}^2 = O(\eps_2) = O(j^2 \delta^\tau_1 + \tr(P_\tau)/t)\;.
\end{align*}
\end{proof}

Next we consider the case of short towers, we first estimate the approximate closure of the associated approximate projection towers.

\begin{lemma}\label{lem:clb2}
For any tower $\tau=\Tow(b,j) \in \mathscr T_{< t}$, let 
\begin{equation}\label{eq:def-deltab2}
\delta^\tau_2 \,=\, \big\| T^*  E_{L^{j-1}b} T - E_{b} \big\|_{\HS}^2 \;.
\end{equation}
Then we have
\begin{equation}\label{eq:clb-0b2}
\sum_{\tau\in\mathscr T_{< t}}\delta^\tau_2 = O(\delta + t^2\eps'')\;.
\end{equation}
\end{lemma}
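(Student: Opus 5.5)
We need to bound $\delta_2^\tau=\|T^*E_{L^{j-1}b}T-E_b\|_{\HS}^2$ for short towers, which by Proposition \ref{prop:decomp} are $\delta$-closed, i.e.\ $\mu(L^jb\cap b)\geq(1-\delta)\mu(b)$. The plan is to insert the intermediate projection $E_{L^jb}$ and use the semi-triangle inequality (Lemma \ref{lem:semitriang}):
\[
\delta_2^\tau\leq 2\big\|T^*E_{L^{j-1}b}T-E_{L^jb}\big\|_{\HS}^2+2\big\|E_{L^jb}-E_b\big\|_{\HS}^2 .
\]
The first term is one summand of $\delta_1^\tau$ from \eqref{eq:def-deltab} (the index $i=j-1$). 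Summing over $\tau\in\mathscr T_{<t}$ and invoking Lemma \ref{lem:clb} bounds the total by $O(\eps'')$. Note that $L^jb$ is still $(M-1)$-definable, as required there, since $M-1>M_0$ leaves room for the extra shift.

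For the second term, $E_{L^jb}$ and $E_b$ are commuting projections (both come from the PVM $E$). So $\|E_{L^jb}-E_b\|_{\HS}^2=\mu(L^jb\,\triangle\, b)=\mu(L^jb)+\mu(b)-2\mu(L^jb\cap b)$. By $\delta$-closedness this is at most $\mu(L^jb)-\mu(b)+2\delta\mu(b)$. The term $2\delta\mu(b)$ sums over all towers to at most $2\delta$, since the bases are disjoint. It remains to control $\mu(L^jb)-\mu(b)$ via approximate invariance.

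Here I would telescope: $|\mu(L^jb)-\mu(b)|\leq\sum_{i=0}^{j-1}|\mu(L^{i+1}b)-\mu(L^ib)|$. Each set $L^ib$ is $(M-1)$-definable, so $|\mu(L\,L^ib)-\mu(L^ib)|\leq\sum_{x\in\pi_{M-1}(L^ib)}|\mu(L\cyl{x})-\mu(\cyl{x})|$. The sets $L^ib$ over all towers and all $i<j$ are pairwise disjoint, so summing over $\tau\in\mathscr T_{<t}$ gives at most the total $(M-1,\eps'')$-invariance defect, i.e.\ $\eps''$. This gives only $O(\eps'')$; I would not expect a loss of a factor $t$ here, but the statement allows $t^2\eps''$. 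One could instead bound each step by a $\tr$-difference of $E_{L^{i+1}b}$ and $T^*E_{L^ib}T$ through Lemma \ref{lem:proj_diff_bound}, which is again controlled by $\delta_1^\tau$. Either route stays within $O(t^2\eps'')$.

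Combining, $\sum_{\tau\in\mathscr T_{<t}}\delta_2^\tau=O(\delta+\eps'')\subseteq O(\delta+t^2\eps'')$.

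The main obstacle is definability bookkeeping. We must make sure that $L^jb$ and $L^ib$ (with $j<t$) are $(M-1)$-definable, so that the disjoint-atom decomposition used in Lemma \ref{lem:clb} and in the invariance sum applies. This follows from $M_0$-definability of the towers together with $M-1\geq M_0+t$, which the parameter choice affords. If this slack were unavailable, I would lose definability at the top level, and I would fall back on the cruder estimate with an extra factor $j\leq t$ per step, squared through the semi-triangle inequality, giving the $t^2\eps''$ in the statement.
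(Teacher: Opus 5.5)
Your proof is correct. It shares the paper's skeleton: insert $E_{L^jb}$ via the semi-triangle inequality, absorb $\|T^*E_{L^{j-1}b}T-E_{L^jb}\|_{\HS}^2$ into $\delta_1^\tau$ and Lemma \ref{lem:clb}, and use $\|E_{L^jb}-E_b\|_{\HS}^2=\mu(L^jb\triangle b)$ since both are projections of the same PVM.

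The difference is in bounding $\sum_\tau \mu(L^jb\triangle b)$.

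\textbf{The paper's route.} It groups the short towers by exact height $j$ and applies Lemma \ref{lem:approx_inv_measures}(ii) with $q=j$ to the disjoint bases in $\mathscr T_{=j}$. This costs $2\delta\mu(W_j)+j\eps''$ per height class, and summing over $j<t$ produces the $t^2\eps''$. The loss arises because Lemma \ref{lem:approx_inv_measures}(i) spends the full invariance budget on each of the $j$ unit shifts separately.

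\textbf{Your route.} You use $\delta$-closedness to reduce to $\mu(L^jb)-\mu(b)+2\delta\mu(b)$. You then telescope $\mu(L^jb)-\mu(b)$ along the levels of the tower itself. All levels $L^ib$ ($0\le i<j$) of all towers are pairwise disjoint and $(M-1)$-defined, so the invariance defect is charged only once in total. Equivalently, via Lemma \ref{lem:proj_diff_bound}, you can charge it to $\sum_\tau\delta_1^\tau$. This gives the sharper bound $O(\delta+\eps'')$ and needs only Lemma \ref{lem:clb}, not Lemma \ref{lem:approx_inv_measures}.

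\textbf{On your definability worry.} Your telescoping needs only that each level $L^ib$ with $i\le j-1$ is $(M-1)$-defined. That is exactly the complexity upper bound (i) of Proposition \ref{prop:decomp} together with $M-1>M_0$. No slack $M-1\ge M_0+t$ is needed, and $L^jb$ need not be $(M-1)$-definable: the top shift is handled through the sets $L\cyl{x}$ for $x\in\pi_{M-1}(L^{j-1}b)$, as in Lemma \ref{lem:clb}. If anything, it is the paper's use of Lemma \ref{lem:approx_inv_measures} with $q=j$, $k=M_0$ that nominally requires $j+M_0\le M-1$. So your route is also cleaner on the bookkeeping side.
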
 
\begin{proof}
   For $j< t$, we will denote by $\mathscr T_{=j} \subset\mathscr T$ the subset of towers of height \emph{exactly} $j$.
   Let us fix $\tau=\Tow(b,j) \in \mathscr T_{< t}$. 
   Firstly, by the height versus closedness dichotomy ($(ii)$ in Proposition \ref{prop:decomp}), $\Tow(b,j)$ is $\delta$-closed, so that $\mu(L^jb\cap b) \geq (1-\delta)\mu(b) $.
   Secondly, note that by definition of $\delta_1^\tau$ in Lemma \ref{lem:clb} and the semi-triangle inequality, we have
   \begin{align*}
         \delta^\tau_2  \leq 2\| E_{L^jb } - E_b \|_{\HS}^2 + 2\delta_1^b &= O\left(\mu(L^j b)   + \mu(b) - 2\mu(L^jb \cap b) + \delta_1^\tau \right) \\
         &= O(\mu(L^jb \triangle b) + \delta_1^\tau)\;. 
   \end{align*}
   For each $0 \leq j < t$, since all the bases $b$ arising in $\Tow(b,j) \in \mathscr T_{=j}$ are all disjoint, $L^jb,b$ are both $M-1$-defined and $\mu$ is $(M-1, \eps'')$-invariant, we can apply Lemma \ref{lem:approx_inv_measures} (proved in Section 5) to the collection of bases arising from $\mathscr T_{=j}$ to deduce
   $$ \sum_{\Tow(b,j) \in \mathscr T_{= j}} \mu(L^jb \triangle b)  \leq O(\delta \mu(W_j) + j\eps'')\;, $$
   where $W_j$ is the union of all bases in $\mathscr T_{=j}$. Consequently:
   \begin{align*}
    \sum_{\Tow(b,j) \in \mathscr T_{< t}}\delta^b_2 &\leq O\left(\sum_{j=0}^{t-1}\left(\delta \mu(W_j)+j\eps'' \right) + \sum_{\Tow(b,j) \in \mathscr T_{< t}}\delta_1^b \right) \leq  O(\delta + t^2\eps'')\;.
   \end{align*}
   Here we also used $\sum_{\Tow(b,j) \in \mathscr T_{< t}}\delta_1^b = O(\eps'')$ provided by Lemma~\ref{lem:clb}, and the fact that the sets $W_j$ are disjoint.
\end{proof}

We can now construct representations from short towers.

\begin{lemma}[Representations from short towers]\label{lem:tower-short}
Let $\tau = \Tow(b,j) \in \mathscr T_{< t}$ and let $P_i = E_{L^ib}$ for $0\leq i <j$.
Abuse notation and also denote $\tau = (P_0, \dots, P_{j-1}; T)$.
Then $\tau$ is an approximate $(\delta^b_1,\delta_2^b)$-closed projection tower, and there exists a unitary representation $\rho_{\tau } : \Gamma \to U(P_\tau \mathcal H)$ such that
\begin{align*}
\| P_\tau B_0 P_{\tau} - \rho_{\tau}(a) \|_{\HS}^2 &= O(j\delta^b_1)\;,\\
    \| P_\tau T P_\tau- \rho_{\tau}(t^{-1})\|_{\HS}^2 & = O(j^2 \delta^b_1 + \delta_2^b)\;.
\end{align*}
\end{lemma}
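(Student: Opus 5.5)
This lemma is the short-tower analogue of Lemma~\ref{lem:tower-long}, and I would follow that proof almost verbatim. The only change is where the closure bound comes from: here it is the quantity $\delta_2^b$ of Lemma~\ref{lem:clb2}, not the trace estimate available for high towers.

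\textbf{Step 1: approximate closed tower.} The projections $P_i=E_{L^ib}$, $0\le i<j$, are pairwise orthogonal, because the sets $L^ib$ are disjoint and $E$ is a PVM. The first condition of \eqref{eq:lowd-0a} is a truncation of the sum defining $\delta_1^b$ in \eqref{eq:def-deltab}. Its terms are nonnegative and $T^*P_iT-P_{i+1}=T^*E_{L^ib}T-E_{L^{i+1}b}$, so the truncated sum is at most $\delta^b_1$. The closing condition is exactly the definition \eqref{eq:def-deltab2}: $\|T^*P_{j-1}T-P_0\|_{\HS}^2=\delta_2^b$. Hence $\tau$ is an approximate $(\delta^b_1,\delta^b_2)$-closed projection tower.

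\textbf{Step 2: round and build the representation.} Apply Lemma~\ref{lem:lowd} to $\tau$. This gives a closed projection tower $(P_0',\dots,P_{j-1}';T')$ with $P_i'\le P_i$ and $T'$ the identity off $P'=\sum_i P_i'$. It is $(\eps_1,\eps_2)$-close to $\tau$ with $\eps_1=O(j\delta_1^b)$ and $\eps_2=O(j^2\delta^b_1+\delta^b_2)$. Since $P'\le P_\tau$ and $T'$ preserves $P'\mathcal H$ and fixes its complement, $P_\tau T'$ restricts to a unitary on $P_\tau\mathcal H$. Thus $(P'_0,\dots,P'_{j-1};P_\tau T')$ is a closed projection tower on $P_\tau\mathcal H$. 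Claim~\ref{claim:b-B} supplies $x^\tau\in\{0,1\}^{[0,j-1]}$ with $B_0P_i=(-1)^{x_i^\tau}P_i$, so $P_\tau B_0P_\tau=\sum_i(-1)^{x_i^\tau}P_i$. Lemma~\ref{lem:tower_to_rep} then gives the representation $\rho_\tau$ with
\[
\rho_\tau(a)=\sum_i(-1)^{x^\tau_i}P_i'+\Big(P_\tau-\sum_i P_i'\Big),\qquad \rho_\tau(t)=P_\tau T'^*.
\]

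\textbf{Step 3: estimates.} The $a$-estimate follows as in Lemma~\ref{lem:tower-long}. The difference $P_\tau B_0P_\tau-\rho_\tau(a)$ is a combination, with bounded coefficients, of the pairwise orthogonal projections $P_i-P_i'$. By Pythagoras its squared norm is $O\big(\sum_i\|P_i-P_i'\|_{\HS}^2\big)=O(\eps_1)=O(j\delta_1^b)$. For the $t$-estimate, note that $\rho_\tau(t^{-1})=P_\tau T'P_\tau$ agrees with $P'T'P'$ up to the identity part on $P_\tau-P'$. Its squared norm is $\tr(P_\tau-P')=O(\eps_1)$, because $P'\le P_\tau$ and \eqref{eq:lowd-0b} holds. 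Combining this with \eqref{eq:lowd-0c} through the semi-triangle inequality (Lemma~\ref{lem:semitriang}) gives $O(\eps_2+\eps_1)=O(j^2\delta_1^b+\delta_2^b)$.

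The only point needing care is this identity term on $P_\tau-P'$ in the $t$-estimate, since \eqref{eq:lowd-0c} compares $P_\tau TP_\tau$ only with $P'T'P'$. It is absorbed by $\eps_1$ exactly as above, so I expect no real obstacle.
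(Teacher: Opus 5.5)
Your proposal is correct and follows the paper's proof, which simply repeats the argument of Lemma~\ref{lem:tower-long}, the only change being that $\delta_2^b$ is defined so that the tower is $(\delta_1^b,\delta_2^b)$-closed by definition. Your extra accounting for the identity part of $\rho_\tau(t^{-1})$ on $P_\tau - P'$ in the $t$-estimate, absorbed via $\tr(P_\tau-P')\le \eps_1$, is a detail the paper's proof glosses over, and you handle it correctly.
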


\begin{proof}
The proof is identical to the proof of Lemma~\ref{lem:tower-long}. The only difference is that $\delta_2^b$ is defined so that $\tau$ is by definition a $(\delta_1^b,\delta_2^b)$-closed projection tower.
\end{proof}

Finally, we can give the proof of Theorem \ref{thm:main}. We will need a final easy consequence of the semi-triangle inequality regarding approximate projection towers:
\begin{claim}
    \label{claim:approx_inv_supp}
Let $\tau = (P_0, \dots, P_{j-1}; R)$ be an approximate $(\delta_1,\delta_2)$-closed projection tower, then $P_\tau$ is approximately invariant under $R$: 
$$\| P_\tau R - R P_\tau \|_{\HS}^2= \| R^* P_\tau R - P_\tau \|_{\HS}^2 \leq j\delta_1 + \delta_2.$$
\end{claim}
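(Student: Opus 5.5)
The plan is to avoid expanding $R^*P_\tau R-P_\tau$ as a sum of $j$ error terms and applying the semi-triangle inequality (Lemma~\ref{lem:semitriang}), since that route only yields $j(\delta_1+\delta_2)$. Instead, I will compute the Hilbert--Schmidt distance between the two projections directly through traces, using that all cross terms are nonnegative. This should give the stronger bound $\delta_1+\delta_2$, which implies the stated $j\delta_1+\delta_2$ because $j\geq 1$. The first equality in the claim is immediate: $R$ is unitary, so $\|P_\tau R-RP_\tau\|_{\HS}=\|R^*(P_\tau R-RP_\tau)\|_{\HS}=\|R^*P_\tau R-P_\tau\|_{\HS}$.

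Set $Q_i=R^*P_iR$ and $Q=\sum_i Q_i=R^*P_\tau R$. The $Q_i$ are again pairwise orthogonal projections, and $Q$ and $P_\tau$ are projections with $\tr(Q)=\tr(P_\tau)$. For any two projections we have $\|Q-P_\tau\|_{\HS}^2=\tr(Q)+\tr(P_\tau)-2\tr(QP_\tau)=2\tr(P_\tau)-2\tr(QP_\tau)$. Let $\sigma(i)=i+1 \pmod j$. Expanding gives $\tr(QP_\tau)=\sum_{i,k}\tr(Q_iP_k)$. Each summand $\tr(Q_iP_k)=\tr(P_kQ_iP_k)$ is nonnegative, so discarding all terms with $k\neq\sigma(i)$ yields
\[
\tr(QP_\tau)\geq \sum_{i=0}^{j-1}\tr(Q_iP_{\sigma(i)}).
\]

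On the other hand, since $\sigma$ is a bijection,
\[
\sum_{i=0}^{j-1}\|Q_i-P_{\sigma(i)}\|_{\HS}^2=\sum_i\big(\tr(Q_i)+\tr(P_{\sigma(i)})-2\tr(Q_iP_{\sigma(i)})\big)=2\tr(P_\tau)-2\sum_i\tr(Q_iP_{\sigma(i)}).
\]
Combining this with the previous inequality gives $\|Q-P_\tau\|_{\HS}^2\leq\sum_{i}\|Q_i-P_{\sigma(i)}\|_{\HS}^2$. By the definition of an approximate $(\delta_1,\delta_2)$-closed projection tower, the right-hand side is at most $\delta_1+\delta_2\leq j\delta_1+\delta_2$.

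The only step that needs care is the positivity of the dropped cross terms $\tr(Q_iP_k)$, which is exactly what makes the argument better than the semi-triangle inequality. It holds simply because the trace of a product of two positive operators is nonnegative. The rest is bookkeeping, so I do not expect a real obstacle.
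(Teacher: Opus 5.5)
Your proof is correct, and it takes a different route from the one the paper indicates. The paper gives no written proof; it only calls the claim ``an easy consequence of the semi-triangle inequality.'' Presumably this means writing
$R^*P_\tau R-P_\tau=\sum_{i=0}^{j-2}(R^*P_iR-P_{i+1})+(R^*P_{j-1}R-P_0)$
and applying Lemma~\ref{lem:semitriang}. That gives $j(\delta_1+\delta_2)$, or $2(j-1)\delta_1+2\delta_2$ if you first group the $j-1$ ``interior'' terms. So that route establishes the stated bound $j\delta_1+\delta_2$ only up to a constant factor. This is harmless, because the claim enters the proof of Theorem~\ref{thm:main} only up to constants.

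Your route instead uses that both $\{R^*P_iR\}_i$ and $\{P_k\}_k$ are orthogonal families, so every dropped cross term $\tr(R^*P_iR\,P_k)$ is nonnegative. This gives the Pythagoras-type inequality $\|\sum_i(Q_i-P_{\sigma(i)})\|_{\HS}^2\leq\sum_i\|Q_i-P_{\sigma(i)}\|_{\HS}^2$. It is a sharp-constant, single-block version of the paper's Lemma~\ref{lem:p-ortho}; that lemma, applied to the families $\{Q_i\}$ and $\{P_{\sigma(i)}\}$ with one block, would give the same conclusion with constant $4$.

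As a result, your argument proves the claim exactly as stated, and in fact the stronger bound $\delta_1+\delta_2$ with no factor of $j$. The semi-triangle route is quicker to state but loses both the factor $j$ and a constant. The improvement does not change the final exponents in Theorem~\ref{thm:main}, since the diagonal blocks there already carry the $j^2\delta_1$ loss from Lemma~\ref{lem:lowd}.
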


\begin{proof}[Proof of Theorem \ref{thm:main}]

Since the tower decomposition $\mathscr T = \mathscr{T}_{<t} \sqcup \mathscr T_{\geq t}$ is disjoint and covers $X$ together with the error set $e$, we have an orthogonal decomposition 
\[\mathcal H = \bigoplus_{\tau \in \mathscr T} P_\tau \mathcal H \oplus E_e\mathcal H\;,\]
where $P_\tau$ is as in Lemmas \ref{lem:tower-long} and \ref{lem:tower-short}. 
It follows that
\[
    \rho = \bigoplus_{\tau \in \mathscr T} \rho_\tau \oplus 1_{E_e\mathcal H}
\]
is a unitary representation of $\Gamma$ on $\mathcal H$.
It follows from Pythagoras and Lemmas \ref{lem:tower-short} and \ref{lem:tower-long} that
\begin{align*}
    \| \rho(a) - B_0\|_{\HS}^2 &= \sum_{\tau \in \mathscr T} \| P_\tau(\rho_\tau(a) - B_0)P_\tau\|_{\HS}^2  + 2\mu(e) \leq \sum_{\Tow(b,j) \in \mathscr T} O(j\delta_1^b) + 2\mu(e) \\
    &\leq O(t\eps'' + t^6(\upsilon+ \delta+\eps'') ) \\
    &= O(t^6(\upsilon+ \delta+\eps''))\;,
\end{align*}
where in the last line we used Lemma \ref{lem:clb} and the bound on $\mu(e)$ appearing in Proposition \ref{prop:decomp}, as well the fact that all towers in $\mathscr T$ have height less than $6t+1$.
To bound the distance between $\rho(t^{-1})$ and $T$, by Lemmas \ref{lem:tower-short} and \ref{lem:tower-long} and Claim \ref{claim:approx_inv_supp} we have:
\begin{align*}
     \sum_{\tau \neq \tau' \in \mathscr T} \|P_\tau T P_{\tau'} \|_{\HS}^2 &= \sum_{\tau \neq \tau' \in \mathscr T, \tau \in \mathscr T_{<t} } \|P_\tau T P_{\tau'} \|_{\HS}^2 + \sum_{\tau \neq \tau' \in \mathscr T, \tau \in \mathscr T_{\geq t} } \|P_\tau T P_{\tau'} \|_{\HS}^2 \\
     &\leq \sum_{\tau \in \mathscr T_{<t}} O(t\delta_1^b + \delta_2^b) + \sum_{\tau \in \mathscr T_{\geq t}} O(t\delta_1^b + \tr(P_\tau)/t)\;,
\end{align*}
where we also used that $P_\tau P_{\tau'}=0$ for $\tau \neq \tau'$. 
It follows from Pythagoras, Lemmas \ref{lem:tower-short} and \ref{lem:tower-long} again that
\begin{align*}
    \| \rho(t^{-1}) - T\|_{\HS}^2 &= \sum_{\tau \in \mathscr T_{<t}} \| P_\tau(\rho_\tau(t^{-1}) - T)P_\tau\|_{\HS}^2 + \sum_{\tau \in \mathscr T_{\geq t}} \| P_\tau(\rho_\tau(t^{-1}) - T)P_\tau\|_{\HS}^2 \\
    & +\sum_{\tau \neq \tau' \in \mathscr T} \|P_\tau T P_{\tau'} \|_{\HS}^2 + 2\mu(e) \\
    &\leq \sum_{\tau \in \mathscr T_{<t}} O(t^2\delta_1^b + \delta_2^b) + \sum_{\tau \in \mathscr T_{\geq t}} O(t^2\delta_1^b + \tr(P_\tau)/t)  + 2\mu(e) \\
    &\leq O(t^2\eps'' + \delta + t^2\eps'' + t^2\eps'' + 1/t +  t^6(\upsilon + \delta+\eps''))  \\
    &\leq O(1/t + t^6(\upsilon + \delta+\eps'')).
\end{align*}
Choose $t=\lceil C\kappa^{-2}\rceil$ and $\delta=\upsilon=c\kappa^{14}$, and require $\eps''\leq c\kappa^{14}$, with the constants chosen so that the last two displayed bounds are at most $\kappa^2/4$. For these choices,
\[
    \ell_0=O\big(\kappa^{-16}\log(2/\kappa)\big),
    \qquad
    M_0=O\big(\kappa^{-20}\log(2/\kappa)\big)
\]
by Proposition~\ref{prop:decomp}. Taking
\[
    M\geq M_0,
    \qquad
    \eps\leq c\,\kappa^7/M^2
\]
gives $\eps''=O(M^4\eps^2)\leq c\kappa^{14}$ by Lemma~\ref{lem:mu-invariance}. 
 Hence
\[
    \|\rho(a)-B_0\|_{\HS}\leq \kappa/2,
    \qquad
    \|\rho(t^{-1})-T\|_{\HS}\leq \kappa/2.
\]
Setting
\[
    \widetilde A=\rho(a),
    \qquad
    \widetilde T=\rho(t^{-1}),
\]
this proves the theorem after adjusting the universal constants for the pre-processing steps (described at the start of Section~\ref{sec:proof-main}).
\end{proof}

\section{Efficient marker sets for general group actions} \label{sec:marker-efficient}

To construct a tower decomposition as in Proposition \ref{prop:decomp}, we will need the notion of \emph{marker sets}, an important tool appearing in ergodic theory, topological dynamics and descriptive graph combinatorics  \cite{GJ, schneider2013locally, GJKS, naryshkin2024urp, grebik2025descriptive}.
Generally speaking, a marker set $Z$ is a subset of a group action $\Gamma \acts X$ which is both sufficiently independent, in the sense that $gZ \cap Z = \emptyset$ for all $g\neq 1$ in some finite subset $F \subset \Gamma$,  as well as sufficiently saturating, in the sense that finitely many $\Gamma$-translates of $Z$ cover $X$.
In the combinatorial sense, marker sets are simply \emph{maximal independent sets} in the Schreier graph of the action, an interpretation which will be important for us.
The goal of this section is Proposition \ref{prop:marker_general}, which guarantees existence of clopen marker sets for general group actions on zero-dimensional polish spaces, which are moreover \emph{effective}, in the sense that the marker is built out of a given input coloring.
With that said, we will only need Proposition \ref{prop:marker}, which is the specialization of Proposition \ref{prop:marker_general} to the full shift over $\mathbb Z$.

To achieve this, we appeal to recent results in the confluence of descriptive combinatorics and LOCAL distributed algorithms, largely following the surveys \cite{bernshteyn2022descriptive, grebik2025descriptive} and the foundational paper \cite{Bernshteyn_inv}, which we recommend for background.
Let $\Gamma$ be a countable discrete group, $X$ be a zero-dimensional Polish space together with a continuous $\Gamma$-action.
All the following statements hold equally well when $X$ is a standard Borel space, and $\Gamma \acts X$ in a Borel manner, by replacing the words "clopen"  and "continuous" by "Borel".
Given a finite symmetric subset $1 \in F \subset \Gamma$, denote by $\mathscr G^F_X \subset X \times X$ the Schreier graph of the action:
$$ \mathscr{G}_X^F = \{(x, g\cdot x) | \; x \in X, g \in F\setminus \{1\} \}.$$
Since $F$ is symmetric, we may treat $\mathscr G_X^F$ as an undirected graph on $X$.
For $Y\subset X$, we denote by $\mathscr G_Y^F \subset Y\times Y$ the induced graph on $Y$. If $Y$ is clopen, then $\mathscr G_Y^F$ is a \emph{locally finite Borel graph}, with a maximum degree bound $\Delta = |F|$ (see \cite{bernshteyn2022descriptive}).
Further, if $Y$ is compact open or $Y = X$, then $\mathscr G_Y^F$ is a \emph{topological graph}, in the sense of \cite[Definition 2.12]{Bernshteyn_inv}.

\begin{definition}[Continuous colorings] \label{def:cont_col}
    For $1 \in F \subset \Gamma$,  $Y\subset X$ clopen as above and $N \in \N$, a continuous map $\varphi: Y \to [N]$ is said to be a \emph{\textbf{$F$-proper continuous coloring}} if it is a proper coloring of $\mathscr G_Y^F$.
\end{definition}

A tautological but important observation is that $\varphi: Y \to [N]$ is a $F$-proper coloring if and only if for each $i \in [N]$, $\varphi^{-1}(i) \subset Y$ is \emph{$F$-independent}, meaning $g\cdot \varphi^{-1}(i) \cap \varphi^{-1}(i) = \emptyset$ for all $i \in [N], g \in F \setminus \{1\}$.
The following terminology will be a useful extension of definability introduced in Section \ref{sec:Ber}, allowing to work with general group actions.

\begin{definition}[Definability of clopen sets]\label{def:definability-clopen}
For $Y \subset X$ clopen as above, and $1 \in T \subset \Gamma$ finite, we say a set $A \subset Y$ is \textbf{\emph{$T$-defined}} with respect to a finite collection of clopen sets $B_1, \dots, B_N \subset Y$ if $A$ is in the algebra of clopen sets generated by $\{ (g\cdot B_i) \cap Y|\; g \in T, i \leq N\}$.

Similarly, if $\varphi: Y \to [N]$ is a continuous map, we say $A$ is $T$-defined with respect to $\varphi$ if it is $T$-defined with respect to the pre-images of $\varphi$.
A continuous map $\psi: Y \to [M]$  is $T$-defined with respect to $\varphi$ if all its pre-images are $T$-defined with respect to $\varphi$.
\end{definition}

Note that in the case of $X =Y = \{0,1\}^\mathbb Z$, $\varphi = \pi_0$ and the action by the full shift, $n$-definability introduced in Section \ref{sec:Ber} is the same as $F_n$-definability introduced in Definition \ref{def:definability-clopen}.

We will need the following result of Linial \cite{Linial} (see also \cite[Corollary 3.24]{BarenboimElkin2013}), regarding the round complexity of $O(\Delta^2)$-graph coloring in distributed LOCAL algorithms.
We refer to \cite[Section 3.10]{BarenboimElkin2013} for two beautiful combinatorial proofs and more background on the rich subject of distributed graph algorithms.

\begin{lemma}[Linial \cite{Linial}]\label{lem:loca_alg_col_red}
There exists a deterministic single-round LOCAL algorithm that given a graph $G$ of degree $\Delta$ and a proper $N$-coloring of it, finds a proper $5\lceil \Delta^2 \log(N)\rceil$-coloring of $G$.

Furthermore, there is a LOCAL algorithm that iteratively re-colors $G$ in $r(N)=\log^*(N)+ O(1)$-rounds and returns an $O(\Delta^2)$-coloring of it.
\end{lemma}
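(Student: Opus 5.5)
The plan is to prove both parts of Lemma~\ref{lem:loca_alg_col_red} with the standard cover-free set family argument. The single-round algorithm only needs a combinatorial object fixed in advance, depending only on $N$ and $\Delta$: a \emph{$\Delta$-cover-free family} $S_1,\ldots,S_N\subseteq[m]$, with $m=5\lceil\Delta^2\log N\rceil$. Cover-free means that for every $c$ and every $c_1,\ldots,c_\Delta\neq c$ we have $S_c\not\subseteq S_{c_1}\cup\cdots\cup S_{c_\Delta}$.

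Given such a family, the round goes as follows. A vertex $v$ of old color $c(v)$ learns the old colors of its at most $\Delta$ neighbours. It then outputs as its new color the smallest element of $S_{c(v)}\setminus\bigcup_{u\sim v}S_{c(u)}$, which is nonempty by cover-freeness since all $c(u)\neq c(v)$. The coloring is proper: if adjacent $u,v$ chose the same element $x$, then $x\in S_{c(u)}$ contradicts $v$'s choice of $x$ outside $S_{c(u)}$. Each vertex only uses its radius-$1$ neighbourhood, so this is a single round, and it is deterministic once the family is fixed.

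The existence of the family is the first key step, and I would do it by the probabilistic method. Put each element of $[m]$ into each $S_c$ independently with probability $p=1/(\Delta+1)$. For a fixed $c$ and fixed $c_1,\ldots,c_\Delta$, a given element witnesses non-covering with probability $p(1-p)^\Delta\geq 1/(e(\Delta+1))$. So the family fails for this choice with probability at most $\exp(-m/(e(\Delta+1)))$. A union bound over at most $N^{\Delta+1}$ choices of $(c,c_1,\ldots,c_\Delta)$ then succeeds once $m\gtrsim e(\Delta+1)^2\ln N$, which is of the form $5\lceil\Delta^2\log N\rceil$ up to checking constants. If the constant $5$ does not come out directly (for small $\Delta$ say), I would treat $\Delta\le 1$ separately or absorb it by the convention on $\log$; alternatively one can follow the cited combinatorial proofs in \cite[Section 3.10]{BarenboimElkin2013}.

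For the furthermore part I would iterate. Starting from $N$, one round gives $N_1=O(\Delta^2\log N)$ colors, and repeating gives $N_{k+1}=O(\Delta^2\log N_k)$. After $\log^*(N)+O(1)$ rounds this stabilizes at $O(\Delta^2\log\Delta)$. To shave the final $\log\Delta$ I would use an algebraic cover-free family instead. Identify colors with polynomials of degree at most $d$ over $\mathbb F_q$, and let $S_f=\{(a,f(a)):a\in\mathbb F_q\}\subseteq\mathbb F_q^2$. Two distinct polynomials agree on at most $d$ points, so $\Delta$ neighbours cover at most $\Delta d<q$ points of $S_f$ provided $q>\Delta d$. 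This needs $q^{d+1}\geq N_k$, yielding $q^2$ colors. With $N_k=O(\Delta^2\log\Delta)$, taking $d=2$ or $3$ and $q=\Theta(\Delta)$ prime (by Bertrand) gives $O(\Delta^2)$ colors in $O(1)$ further rounds. I expect this last step to be the main obstacle: balancing $q^{d+1}\geq N_k$ against $q>\Delta d$ with $q=O(\Delta)$. I would first try $d=2$, which needs $q^3\gtrsim\Delta^2\log\Delta$, easily satisfied by $q\approx 2\Delta+1$.
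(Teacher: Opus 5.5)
Your proposal is correct and follows the route the paper relies on. The paper gives no proof of its own: it cites \cite{Linial} and the combinatorial proofs in \cite[Section 3.10]{BarenboimElkin2013}, and remarks only that the furthermore part follows by iterating the single-round reduction down to $O(\Delta^2\log\Delta)$ colors plus one extra round to reach $O(\Delta^2)$. Your cover-free family round, iterated, followed by the polynomial family over $\mathbb F_q$ with $q=\Theta(\Delta)$, implements exactly that. The remaining loose ends are bookkeeping: the exact constant $5$ for very small $\Delta$ is harmless, since only the $O(\Delta^2)$ output is used later. The $O(1)$ in $\log^*(N)+O(1)$ is uniform in $\Delta$, as the paper requires: while $\log N_k>\Delta^2$ the color counts obey the $\Delta$-free recursion $N_{k+1}=O((\log N_k)^2)$, and after that two more rounds reach $O(\Delta^2\log\Delta)$.
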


Note that the constants in the big $O$ notation above are universal and independent of $\Delta$, and $\log^*$ denotes the iterated logarithm function: $\log^*(N)$ is the number of times $\log$ needs to be applied iteratively to $N \in \mathbb N$ to be less than or equal to $2$. 
The ``Furthermore'' part almost follows by iterating the first part of the lemma, with an additional round required to reduce from $O(\Delta^2\log\Delta)$ to $O(\Delta^2)$ colors. 
Note that unlike general LOCAL algorithms, that receive as input a unique identifiers/bijective coloring for the graph (see \cite[Definition 2.8]{Bernshteyn_inv}), Linial's algorithm merely assumes that the input coloring is \emph{proper}.
This will be important in the subsequent use of Lemma \ref{lem:loca_alg_col_red}.


The following is an efficient version of Schneider and Seward's marker lemma \cite[Lemma 3.1]{schneider2013locally}, due to Petr Naryshkin \cite{Petr_arg}.

\begin{proposition}[Efficient marker lemma, general version \cite{Petr_arg}]\label{prop:marker_general}
Let $\Gamma \acts X$ be an action of a countable group on a zero-dimensional polish space.
Let $1 \in F \subset \Gamma$ be a finite symmetric subset, $Y\subset X$ a compact open subset or $Y= X$. 
Assume  $\varphi: Y \to [N]$ is a given continuous $F$-proper coloring for some $N \in \mathbb N$.
Then there exists a clopen subset $Z \subset Y$ which satisfies:
\begin{enumerate}
    \item $Z$ is $F$-independent.
    \item The collection $gZ, g \in F$ covers $Y$.
    \item $Z$ is $B_F(r(N) + O(|F|^2))$-defined with respect to $\varphi$. 
\end{enumerate}
\end{proposition}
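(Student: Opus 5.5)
The plan has three stages: reduce the number of colours with Linial's algorithm, run a greedy maximal-independent-set procedure over the colour classes, and check definability by unwinding both procedures. Throughout we use the tautological observation that a colouring is $F$-proper iff each colour class is $F$-independent.

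\emph{Step 1: colour reduction.} View $\mathscr G_Y^F$ as a graph of maximum degree $\Delta \le |F|$ with the proper colouring $\varphi$. Apply the iterated algorithm of Lemma~\ref{lem:loca_alg_col_red}: after $r(N)=\log^*(N)+O(1)$ rounds we get a proper colouring $\psi: Y\to [K]$ with $K=O(|F|^2)$. A deterministic LOCAL algorithm of $r$ rounds computes the output at $x$ from the input colours on the radius-$r$ ball of $x$ in the Schreier graph. Here that ball is $\{g\cdot x : g\in B_F(r)\}\cap Y$, where $B_F(r)=F^r$. Hence each fibre $\psi^{-1}(k)$ is a finite Boolean combination of sets $g^{-1}\varphi^{-1}(i)\cap Y$ with $g\in B_F(r(N))$, so $\psi$ is continuous and $B_F(r(N))$-defined with respect to $\varphi$. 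The only care needed is near the boundary of $Y$: the local view must record which neighbours $g\cdot x$ lie in $Y$. This is itself expressed by the clopen sets $g^{-1}Y\cap Y$, which lie in the generated algebra because $Y=\bigcup_i\varphi^{-1}(i)$. Note also that Linial's algorithm needs only a proper input colouring, not unique identifiers, which is exactly what we have.

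\emph{Step 2: greedy maximal independent set.} Define $Z_0=\emptyset$ and, for $k=1,\dots,K$,
\[
Z_k = Z_{k-1}\cup\Big(\psi^{-1}(k)\setminus \bigcup_{g\in F} g Z_{k-1}\Big),
\]
and set $Z=Z_K$.

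\begin{itemize}
\item \emph{Independence.} Each colour class $\psi^{-1}(k)$ is $F$-independent. A newly added point avoids $FZ_{k-1}$, so it is not $F$-adjacent to any earlier point of $Z$. Hence $Z$ is $F$-independent.
\item \emph{Covering.} Take $y\in Y$ with $\psi(y)=k$. Either $y\in Z_k$, or $y\in gZ_{k-1}$ for some $g\in F$. In both cases $y\in FZ$, using $1\in F$. So the translates $gZ$, $g\in F$, cover $Y$.
\item \emph{Clopenness.} Each stage takes finite unions and intersections of clopen sets with translates of clopen sets, so $Z$ is clopen.
\end{itemize}

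\emph{Step 3: definability.} Since $F$ is symmetric, membership of $x$ in $gZ_{k-1}$ for $g\in F$ depends on $\psi$ on $F\cdot x$ together with earlier stages. By induction, $Z_k$ is $B_F(k)$-defined with respect to $\psi$. Thus $Z$ is $B_F(K)=B_F(O(|F|^2))$-defined with respect to $\psi$. Composing with Step 1 and using $B_F(a)B_F(b)\subseteq B_F(a+b)$, $Z$ is $B_F(r(N)+O(|F|^2))$-defined with respect to $\varphi$.

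The main obstacle is making this composition of definability rigorous: a set that is $T$-defined with respect to a map that is itself $S$-defined with respect to $\varphi$ must be shown to be $TS$-defined with respect to $\varphi$, while handling the intersections with $Y$ correctly. The plan is to isolate this as a short lemma: translation by $g$ carries the algebra generated by $\{h\varphi^{-1}(i)\cap Y: h\in S\}$ into the one generated by $\{gh\varphi^{-1}(i)\cap gY: h\in S\}$, and the factor $gY\cap Y$ is expressible since $Y$ is the union of the fibres of $\varphi$. The remaining steps are routine.
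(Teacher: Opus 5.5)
Your proposal is correct and is essentially the paper's proof. It has the same three stages: Linial colour reduction starting from $\varphi$ (the paper's Lemma~\ref{lem:col_red}), then the Schneider--Seward greedy pass over the $O(|F|^2)$ colour classes, then an inductive definability count. The only difference is bookkeeping: the paper inducts directly with respect to $\varphi$ (showing $Z_i$ is $B_F(r(N)+i-1)$-defined), while you work with respect to $\psi$ and then compose, which is equivalent.

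One small precision in Step~1. The ball in $\mathscr G_Y^F$ is only contained in $B_F(r)\cdot x\cap Y$, and colours need not detect coincidences $gx=g'x$. The clean justification is that each Linial round depends only on a vertex's colour and the \emph{set} of its neighbours' colours, which by induction makes the colouring after $s$ rounds $B_F(s)$-defined with respect to $\varphi$.
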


The proposition is specialized to the full shift setup in the next section to derive Proposition~\ref{prop:marker}, yielding marker sets of polynomial complexity.

To prove Proposition \ref{prop:marker_general}, we need the fact that efficient deterministic local graph algorithms yield continuous colorings of (topological) Borel graphs.
This follows from the proof of a recent result of Bernshteyn \cite[Theorem 2.13]{Bernshteyn_inv} (see also  \cite[Theorem 2.11]{bernshteyn2022descriptive} ), together with Linial's efficient LOCAL coloring algorithm (Lemma \ref{lem:loca_alg_col_red}) for $\Delta = |F|$.
The idea is to use the assumed $F$-proper coloring $\varphi:Y \to [N]$ as a starting point for Linial's algorithm, which then progressively reduces the number of colors used.
After $r(N)$ local iterations this yields an  $F$-proper coloring with only $O(\Delta^2)$-many colors. Moreover, by the way it is constructed this coloring is $T$-defined with respect to $\varphi$, where $T = B_F(r(N))$. Formally, we have the following.


\begin{lemma}[Efficient color reduction]\label{lem:col_red}
Let $\Gamma \acts X$ be an action of a countable group on a zero-dimensional polish space.
Let $1 \in F \subset \Gamma$ be a finite symmetric subset, $Y\subset X$ a compact open subset or $Y= X$. 

Assume that there exists  $N \in \N$ 
and a continuous $F$-proper coloring $\varphi: Y \to [N]$. 
Then there exists a continuous $F$-proper coloring $\psi: Y \to [O(|F|^2)]$ which is $T=B_F(r(N))$-defined with respect to $\varphi$.
\end{lemma}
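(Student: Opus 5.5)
The plan is to simulate Linial's LOCAL algorithm (Lemma~\ref{lem:loca_alg_col_red}) directly on the Schreier graph $\mathscr G_Y^F$, using $\varphi$ as the initial proper coloring. We then check by induction on the number of rounds that the output after $k$ rounds is a continuous coloring which is $B_F(k)$-defined with respect to $\varphi$. This is the argument underlying Bernshteyn's \cite[Theorem 2.13]{Bernshteyn_inv}. Here it is simpler, because Linial's algorithm needs only a proper coloring as input and not unique identifiers, so no auxiliary identifier assignment is required.

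\textbf{Step 1: one round.} A deterministic single-round LOCAL algorithm is a rule $\mathcal A$ that assigns to each vertex $x$ a new color depending only on its own color and the multiset of colors of its neighbours. Linial's rule in fact depends only on the set of neighbouring colors, so it is insensitive to labelled ports. Given a continuous $F$-proper coloring $\varphi_k: Y\to[N_k]$, define
\[
\varphi_{k+1}(x)=\mathcal A\big(\varphi_k(x),\{\varphi_k(g\cdot x): g\in F\setminus\{1\},\ g\cdot x\in Y\}\big).
\]
The data fed into $\mathcal A$ is determined by which of the clopen sets $g^{-1}(\varphi_k^{-1}(i)\cap Y)\cap Y$, for $g\in F$ and $i\le N_k$, contain $x$. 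Whether $g\cdot x\in Y$ is recorded by the set $g^{-1}Y\cap Y$, which is $F$-defined; this is where we need $Y$ clopen. Hence each fibre $\varphi_{k+1}^{-1}(c)$ is a finite Boolean combination of these clopen sets, so $\varphi_{k+1}$ is continuous and $F$-defined with respect to $\varphi_k$. Properness of $\varphi_{k+1}$ follows from the correctness of $\mathcal A$ on every graph of maximum degree at most $\Delta=|F|$. We apply this to the induced subgraph on each (finite or infinite) connected component of $\mathscr G_Y^F$, and since $\mathcal A$ uses only radius-one information, the correctness argument applies to infinite graphs as well.

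\textbf{Step 2: iteration and composition of definability.} If $A$ is $T_1$-defined with respect to $\psi$, and each fibre of $\psi$ is $T_2$-defined with respect to $\varphi$, then $A$ is $T_1T_2$-defined with respect to $\varphi$. This holds because $g\cdot(\text{Boolean combination of } h\cdot B_i\cap Y)\cap Y$ is a Boolean combination of sets $gh\cdot B_i\cap Y$, after intersecting with $g\cdot Y\cap Y$, which is again a clopen set of the right form. Running the $r(N)=\log^*(N)+O(1)$ rounds of the ``Furthermore'' part of Lemma~\ref{lem:loca_alg_col_red} therefore yields a proper coloring $\psi$ with $O(\Delta^2)=O(|F|^2)$ colors, which is $F^{r(N)}=B_F(r(N))$-defined with respect to $\varphi$.

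\textbf{Main obstacle.} The main obstacle is the bookkeeping at the boundary of $Y$ when $Y\neq X$. Translates $g\cdot x$ may leave $Y$, so the composition of definability must keep track of the intersections with $Y$ and $g\cdot Y$. A secondary point is to make sure the final $O(\Delta^2\log\Delta)\to O(\Delta^2)$ reduction round of Linial's algorithm is also a radius-one rule, which it is. Both issues are handled by the Boolean-algebra closure noted in Step 2.
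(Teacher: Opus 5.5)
Your proposal is correct and follows essentially the paper's approach: both run Linial's algorithm on $\mathscr G^F_Y$ with $\varphi$ as the input proper colouring, in place of the identifier colouring in Bernshteyn's argument. The only difference is bookkeeping. You get $B_F(r(N))$-definability by induction over rounds, composing one-round $F$-definability, whereas the paper appeals to \cite[Lemma 4.1]{Bernshteyn_inv}, namely that $x\mapsto$ the isomorphism class of the rooted coloured ball $\big(\mathscr G^F_{T\cdot x\cap Y},\varphi|_{T\cdot x\cap Y}\big)$ is $T$-defined with respect to $\varphi$.
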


\begin{proof}
As mentioned earlier, there exists a deterministic distributed LOCAL algorithm that finds a $O(\Delta ^2)$-coloring of a graph of maximum degree $\Delta$ in $r(N)$ rounds, assuming an input proper $N$-coloring  (Lemma \ref{lem:loca_alg_col_red}).
Recall the notions of \cite[Theorem 2.13]{Bernshteyn_inv}, and consider the collection $\texttt{G}$ of all isomorphism classes of finite structured graphs \cite[Definition 2.1]{Bernshteyn_inv} with maximum degree at most $\Delta = |F|$, together with the local coloring problem $\Pi$ of proper coloring \cite[Example 2.7]{Bernshteyn_inv}. 
Observe that in the proof presented in \cite[Section 4]{Bernshteyn_inv}, instead of using Kechris-Solecki-Todorcevic to find an initial unique identifiers Borel proper coloring $c:Y \to [N]$, 
in our context it is sufficient to start from the assumed coloring $\varphi:Y \to [N]$, which is by assumption $F$-proper.
We may then apply Linial's algorithm to $\varphi$ for $r(N)$ rounds to iteratively construct a coloring $\psi:Y \to [O(|F|^2)]$ of $\mathscr G_Y^F$ which is $T$-defined with respect to $\varphi$.
 The $T$-definability of $\psi$ follows from the observation that the map taking $x \in Y$ to the isomorphism class of the  rooted finite graph $\mathscr G_{T\cdot x \cap Y}^F$ endowed with the coloring $\varphi |_{T \cdot x \cap Y}$ is $T$-defined with respect to $\varphi$ (see \cite[Lemma 4.1]{Bernshteyn_inv}). 
\end{proof}

Having this at hand, we can now prove Proposition \ref{prop:marker_general} by using a greedy maximal independent set algorithm, as in \cite[Lemma 3.1]{schneider2013locally}.

\begin{proof}[Proof of Proposition \ref{prop:marker_general}]
By applying Lemma \ref{lem:col_red}, we obtain an $F$-proper coloring $\psi: Y \to [O(|F|^2)]$ which is $T$-defined with respect to the assumed coloring $\varphi$.
Consider the following iterative greedy algorithm with $M=O(|F|^2)$ many steps: Set $Z_1 = \psi^{-1}(1)$, and for each $1 < i \leq M$, set
$$Z_i = Z_{i-1} \cup  \Big(\psi^{-1}(i) \setminus \bigcup_{g \in {F}} g\cdot  Z_{i-1} \Big)\;.$$
We claim that setting $Z = Z_{M} \subset Y$ we get the desired set.
The proof that $Z$ possesses properties $(i), (ii)$ in the statement of the proposition follows from the proof of \cite[Lemma 3.1]{schneider2013locally}, together with the fact that $\psi^{-1}(i)$ is $F$-independent for all $i$.
Recall that $T = B_F(r(N))$,
we will finish the proof by showing inductively on $i\leq M$ that $Z_i$ is $B_F(r(N)+i-1)$-defined with respect to $\varphi$.
Indeed, we have that $Z_1$ is $T$-defined with respect to $\varphi$ as guaranteed by Lemma \ref{lem:col_red}.
For $1< i \leq M$, since $\psi^{-1}(i)$ is $T$-defined with respect to $\varphi$, and  $\bigcup_{g \in {F}} g\cdot  Z_{i-1}$ is $F$-defined with respect to $Z_{i-1}$, we see that $Z_i$ is $F \cdot B_F(r(N)+i-2) \subset B_F(r(N) + i-1)$-defined with respect to $\varphi$.
\end{proof}

\section{Tower decomposition for approximately invariant measures}
\label{sec:decomp}

In this section we give the proof of the main tower decomposition procedure (Proposition \ref{prop:decomp}).
The proof will start by covering approximately periodic sequences by approximately closed towers, then proceed to cover the complement by high towers constructed via the polynomial marker lemma.
We first set up notation to be used throughout this section.

Recall notation and terminology from Section \ref{sec:Ber} regarding the full shift $L:X \to X$, where $X = \{0,1\}^\Z$.
Fix parameters $t, \delta, \eta, \upsilon$, and let $\mu$ be a given probability measure on $X$.
After the marker set is constructed, we will choose the locality parameter $M$ in \eqref{eq:M-poly} and assume that $\mu$ is $(M,\eta)$-invariant.

Recall $F_t = [-t,t]$ and $X_{per}(t)$ are the periodic sequences with period at most $t$, that is:
$$ X_{per}(t) = \{ x \in X | \exists i \in F_t \setminus \{0\} \; L^i x = x \}\;.$$
Now let $X_{aper}(t) = X \setminus X_{per}(t)$ be the complement; we will refer to its elements as \textbf{\emph{$F_t$-aperiodic sequences}}.

It will be important to consider clopen versions of $X_{per}(t)$ and $X_{aper}(t)$. 
For this, we introduce the following clopen sets:

\begin{definition}[Approximately aperiodic sequences] \label{def:approx_per}
Given $t,\ell \in \mathbb N$, let
\begin{align*}
X_{aper}^\ell(t) &=  \{ x \in X | \; \cyl{\pi_\ell(x)} \text{ is $F_t$-independent}\} \subset X\;,\\
X_{per}^\ell(t) &=  X \setminus X_{\aper}^\ell(t) \;.
\end{align*}
\end{definition}
That is, $X_{aper}^\ell(t)$ are the sequences which are $\ell$-defined, and are "$F_t$-aperiodic" on the interval $F_\ell$, in the sense that the corresponding cylinder set is $F_t$-independent.

It follows from the definition that the sets $X^\ell_{\aper}(t), X^\ell_{\per}(t)$ are clopen and $\ell$-defined.
We will also denote by 
\[ Y^\ell_{\aper}(t) = \pi_\ell(X^\ell_{\aper}(t))\qquad\text{and}\qquad Y^\ell_{\per}(t) = \pi_\ell(X^\ell_{\per}(t))\]
the corresponding projections to $\{0,1\}^{F_\ell}$.
Observe that by definition, the collection $\mathscr B = \{ \cyl{y} \}_{y \in Y_{aper}^\ell(t)}$ consists of $\ell$-defined cylinder sets which cover $X_{aper}^\ell(t)$ and are $F_t$-independent, a crucial property that will be used later in the proof.
Note also that $ | \mathscr B | \leq 2^{2\ell+1}$.
%
%
%
%
%
%

\subsection{Covering the periodic sequences} \label{subsec:per}

\subsubsection{The minimal periodic extension}

The following notion is related to the ``orbit closing lemma'' for the full shift \cite[Proposition 8.6]{levit2023characters}, \cite{Par}, and the fact that it has dense periodic measures.

\begin{definition}[minimal periodic extension]\label{def:min_period}
Let $t \in \N$, and $x\in\{0,1\}^{F_t}$, let $\per(x) \in \N$ be the \textbf{\emph{minimal period}} of $x$ defined by:
$$ \per(x) = \min\{j \in \mathbb N | x \in \pi_t(X_{\per}(j))\}\;.$$
And the \textbf{\emph{minimal periodic extension}} $\ext(x) \in X$ to be the unique element of $\cyl{x} \cap X_{\per}(\per(x))$.
\end{definition}

It is easy to check that the minimal period of $x \in \{0,1\}^{F_t}$ is at most $2t+1$, and that the minimal periodic extension is well defined (that is, $\cyl{x} \cap X_{\per}(\per(x))$ is a singleton). 
We will also use the following notation to reduce clutter: 

\begin{definition}[extension and restriction]
Given $x\in\{0,1\}^{F_t}$, $\ell\in\mathbb{N}$ define $x^\ell\in\{0,1\}^{F_\ell}$ as follows:
\begin{enumerate}
    \item If $\ell \leq t$, we let $x^\ell = \pi_\ell(x) \in \{0,1\}^{F_\ell}$ be the restriction to $F_\ell$.
    \item If $t < \ell$, we let $x^\ell = \pi_\ell(\ext(x)) \in \{0,1\}^{F_\ell}$.
\end{enumerate}
\end{definition}

For example, if $x=100\in\{0,1\}^{[1]}$ then $\per(x)=3$ and $x^2=01001$, and if $x=101\in\{0,1\}^{F_1}$ then $\per(x)=2$ and $x^2=01010$.

\begin{lemma}[Properties of the minimal periodic extension]\label{lem:per_ext_properties}
The following statements hold:
\begin{enumerate}
    \item For $x \in \{0,1\}^{F_t}$,  $\per(x)$ is the \emph{maximal} number $j$ such that $\cyl{x}$ is $F_{j-1}$-independent, and that
    \item For $x \in \{0,1\}^{F_t}$, we have:
    \begin{equation}\label{eq:def-period}
    \per(x)=\min\big\{j\in\{1,\ldots,2t+1\}:\ (L^j\cyl{x})|_{[-t+j, t]} = \cyl{x} |_{[-t+j,t]}\big\}\;.
    \end{equation}
    \item If $r \geq 2t$, $y \in \{0,1\}^{F_r}$ is such that $\per(y) \leq t$ and $x = y^{2t}$, then $\per(x) = \per(y)$ and $y = x^r$.
\end{enumerate}
\end{lemma}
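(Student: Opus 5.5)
The whole proof runs through one combinatorial reformulation. For $x\in\{0,1\}^{F_t}$ and $1\le j\le 2t+1$, I will show that the following are equivalent:
\begin{itemize}
\item[(a)] $x$ has a $j$-periodic extension, i.e.\ $x\in\pi_t(X_{\per}(j))$;
\item[(b)] $x_i=x_{i+j}$ whenever $i,i+j\in F_t$, which is the condition in \eqref{eq:def-period};
\item[(c)] $L^j\cyl{x}\cap\cyl{x}\neq\emptyset$.
\end{itemize}
The implications (a)$\Rightarrow$(c)$\Rightarrow$(b) are immediate: a $j$-periodic $z\in\cyl{x}$ satisfies $L^jz=z$, and any $w\in L^j\cyl{x}\cap\cyl{x}$ forces $x_i=w_{i}=x_{i-j}$ on the overlap. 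For (b)$\Rightarrow$(a), define $z_i=x_{i'}$ where $i'\in F_t$ with $i'\equiv i \pmod j$. This is well defined because by (b) all representatives of a residue class in $F_t$ carry the same bit. Every residue class meets $F_t$ since $j\le 2t+1$, and $z$ is $j$-periodic.

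Uniqueness follows from the same construction. A $j$-periodic extension is determined on each residue class by any one of its entries in $F_t$, so it is unique when $j\le 2t+1$. In particular $j=2t+1$ always works, which gives $\per(x)\le 2t+1$ and shows $\ext(x)$ is well defined. Note that $X_{\per}(\per(x))\cap\cyl{x}$ consists of points of exact minimal period $p=\per(x)$, so this uniqueness applies.

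For (ii), the equivalence (a)$\Leftrightarrow$(b) shows that $\per(x)$ is the least $j$ satisfying (b), which is exactly \eqref{eq:def-period}. For (i), $\cyl{x}$ is $F_{j-1}$-independent iff $L^i\cyl{x}\cap\cyl{x}=\emptyset$ for $0<|i|\le j-1$. By (c), and by the symmetry $i\mapsto -i$ (apply $L^{-i}$), this holds iff no $1\le i\le j-1$ satisfies (a). That is the statement $j\le\per(x)$, so the maximal such $j$ is $\per(x)$. The one boundary point to check is that (b) is vacuous for $j\ge 2t+1$, consistent with $\per(x)\le 2t+1$.

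For (iii), let $p=\per(y)\le t$. Then $y$ satisfies (b) on $F_r$ with shift $p$, hence so does its restriction $x=y^{2t}$ on $F_{2t}$, giving $\per(x)\le p$. For the reverse inequality, suppose $q=\per(x)<p$. Then $x$ satisfies (b) for both $p$ and $q$ on the window $F_{2t}$, which has length $4t+1\ge p+q$.

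This is the main obstacle. I plan to invoke the Fine--Wilf theorem: a word of length at least $p+q-\gcd(p,q)$ with periods $p$ and $q$ has period $g=\gcd(p,q)$. So $x$ has period $g$. Now $y$ has period $p$, and each residue class mod $p$ meets $F_{2t}$ in a run of consecutive positions, since $F_{2t}\supseteq$ a full window of length $p$. Therefore $y$ is determined by $x$ through $p$-periodicity, and the period-$g$ structure of $x$ propagates to $y$. Concretely, $y_{i+g}=y_{i'+g}=x_{i'+g}=x_{i'}=y_i$, where I choose the representative $i'$ with both $i',i'+g\in F_{2t}$; this is possible because the window has length $\ge p+g$. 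Hence $y$ has period $g<p$, contradicting minimality. So $\per(x)=p$.

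Finally, $y$ and $\ext(x)$ restricted to $F_r$ are both $p$-periodic and agree on $F_{2t}\supseteq$ a full period. They therefore coincide on $F_r$, giving $y=x^r$ (using $r\ge 2t$, so $x^r$ means $\pi_r(\ext(x))$).
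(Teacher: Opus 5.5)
Your proof is correct, and for (iii) it is the contrapositive of the paper's argument. The paper takes a witness $y_i\neq y_{i-j'}$ of non-periodicity in $F_r$ and reduces it modulo $p=\per(y)$ into $F_{2t}$; it writes this only for $j'=p-1$, although every $j'<p$ is needed, which your version does handle. You instead push a smaller period of $x$ out to all of $F_r$ by the same reduction.

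For (i)--(ii), which the paper leaves to the reader, your equivalence of (a), (b), (c) is the right argument, with one small caveat. Condition (a) should read ``$L^jz=z$ for some $z\in\cyl{x}$'' rather than $x\in\pi_t(X_{\per}(j))$, since the latter only gives some period $\le j$. This is harmless, because only the least such $j$ is ever used.

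The Fine--Wilf step is unnecessary. Your computation $y_{i+q}=y_{i'+q}=x_{i'+q}=x_{i'}=y_i$ never uses $g\mid p$. Moreover $|F_{2t}|=4t+1\ge p+q$ already lets you pick $i'\equiv i \pmod p$ with $i',i'+q\in F_{2t}$, so $y$ inherits the period $q$ directly.
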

\begin{proof}
    The proofs of the first two claims are left for the reader. For part (iii), set $j=\per(y)$, note that $\per(x) \leq j$ by part (i), since $\cyl{x} \subset \cyl{y}$.
    Conversely, by part (ii), we have $(L^{j-1} \cyl{y})|_{[-r+j-1, r]} \neq \cyl{y} |_{[-r+j-1,r]}$, so that there exists $-r+j-1 \leq i \leq r$ such that $y_i \neq y_{i - (j-1)}$. 
    Write $i =qj + b$ for $0 \leq |b| < j$ such that $|qj| \leq |i|$.
    Since $y$ is the restriction of a $j$-periodic element of $\{0,1\}^{\Z}$, we have $y_b \neq y_{b-(j-1)}$ as well.
    However, $|b|, |b-(j-1)| \leq 2t$, so that $x_b \neq x_{b-(j-1)}$ and as such $\per(x) > j-1$ by part (ii) again. 
    Consequently, $\per(x) =\per(y)$ and $y = x^r$ follows readily. 
\end{proof}
\begin{remark}\label{rem:min_per}
    In what follows, it is useful to observe that the set $Y^\ell_{\per}(t)$ can be identified  using the minimal periodic extension as
    $$ Y^\ell_{\per}(t) = \big\{ y \in \{0,1\}^{F_\ell} | \; \per(y) \leq t \big\}\;.$$
\end{remark}

\subsubsection{Constructing periodic towers}

We can now formulate the key lemma for this subsection, which provides a procedure to cover $X^\ell_{\per}(t)$ by approximately closed towers. Recall that a tower $\Tow(b,j)$ is said to be \emph{$\delta$-closed} if $\mu(L^j b \cap b)\geq (1-\delta)\mu(b)$.

\begin{lemma} \label{lem:covering_per_seq}
Let $\frac12\ge \upsilon,\delta>0$ and $t$ an integer. Let $\ell\in\mathbb{N}$ such that $\ell\geq  C_1 t\log(1/\upsilon)/\delta$, where $C_1>0$ is a universal constant. Then there is a disjoint covering $\Xi_{\per} = E \sqcup \bigsqcup \Tow(b,j)$ of $X_{per}^\ell(t)$ consisting of an error set $E$ and a disjoint union of towers $\Tow(b,j)$ which satisfy:
\begin{itemize}
    \item $E$ is $\ell$-defined and $\mu(E) < \upsilon$.
    \item The towers $\Tow(b,j)$ are $\delta$-closed, $O(t\ell)$-defined and of height at most $6t+1$.
    \item  Each base $b$ of a tower $\Tow(b,j)$ is $\ell$-defined, and the projection $\pi_t(b) \in \{0,1\}^{F_t}$ is a singleton.
\end{itemize}
\end{lemma}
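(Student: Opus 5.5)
The plan is to work cylinder by cylinder on $Y^\ell_{\per}(t)$, using the minimal periodic extension to organise the periodic part into orbits, and then a multiscale pigeonhole to secure $\delta$-closedness. By Remark~\ref{rem:min_per}, $X^\ell_{\per}(t)=\bigsqcup_{y}\cyl{y}$ over $y\in\{0,1\}^{F_\ell}$ with $\per(y)\le t$. By Lemma~\ref{lem:per_ext_properties}(iii), each such $y$ is determined by its restriction $y^{2t}$ and satisfies $y=(y^{2t})^\ell$. Hence the cylinders $\cyl{y}$ are naturally grouped into $L$-orbits of periodic words: if $p=\ext(y)$ has minimal period $j\le t$, then the shifts $L^i\cyl{\pi_\ell(p)}$ and the cylinders $\cyl{\pi_\ell(L^ip)}$ are closely related for $0\le i<j$. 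Fix one representative $y$ per orbit, with $\pi_t(y)$ recording the base; this will give the singleton condition on $\pi_t(b)$.

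The next step introduces scales $\ell_k=\ell+2tk$ for $0\le k\le K$, with $K=\lceil C\log(1/\upsilon)/\delta\rceil$. The hypothesis $\ell\ge C_1t\log(1/\upsilon)/\delta$ is exactly what makes $\ell_K=O(\ell)\le O(t\ell)$. For $x\in\cyl{y}$ define the \emph{depth} $k(x)$ as the largest $k$ with $\pi_{\ell_k}(x)=y^{\ell_k}$; that is, $x$ agrees with the periodic continuation of $y$ out to radius $\ell_k$. Shifting by $i<6t$ changes depth by at most one level, because the spacing $2t$ exceeds the shift. This allows bases of the form $b=\cyl{y}\cap\{k(x)\ge k\}$ to produce towers $\Tow(b,j')$, with heights $j'$ equal to $j$ or to a multiple $mj$ in a window like $[t,6t]$, chosen so that heights from different orbits fit together. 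Such towers are $O(t\ell)$-defined, with bases defined from $\cyl{y}$ at level $\ell$ after intersecting with a depth condition. Closedness then reads: $L^{j}b\cap b$ contains all points of depth $\ge k+1$. So the tower is $\delta$-closed as soon as $m_{k+1}\ge(1-\delta)m_k$, where $m_k=\mu(\cyl{y}\cap\{\text{depth}\ge k\})$.

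The pigeonhole step runs as follows. The sequence $m_k$ is nonincreasing. Either some $k<K$ satisfies $m_{k+1}\ge(1-\delta)m_k$, or $m_K\le(1-\delta)^Km_0\le\upsilon\, m_0$. In the second case $\cyl{y}$ is essentially all shallow. Summing over $y$, the cylinders of the second kind, together with leftover pieces, should form an $\ell$-defined error set $E$ with $\mu(E)<\upsilon$. In the first case, take the first good $k_y$ and use the base $\cyl{y}\cap\{\text{depth}\ge k_y\}$.

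The main obstacle is covering the shallow layers $\cyl{y}\cap\{\text{depth}<k_y\}$. These points lie in $X^\ell_{\per}(t)$ and must be covered by $\delta$-closed towers, yet $E$ is only allowed to be $\ell$-defined, so they cannot simply be discarded. My first attempt is to choose $k_y$ as the \emph{last} scale at which the ratio condition fails rather than the first. Each shallow layer $D_k=\{\text{depth}=k\}$ would then itself form the base of a tower whose closedness is inherited from the good ratio at level $k$. If that fails, I would re-partition by the minimal periodic extension at scale $\ell_{k+1}$, where these points have a different (possibly non-periodic) continuation, and recurse. In that case the $\log(1/\upsilon)/\delta$ budget on scales would be split between the pigeonhole and the recursion.
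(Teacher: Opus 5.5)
There is a genuine gap: you run the multiscale pigeonhole on the wrong side of $\ell$. With scales $\ell_k=\ell+2tk$ above $\ell$, every point of $\cyl{y}$ ($y\in Y^\ell_{\per}(t)$) lies in $X^\ell_{\per}(t)$, shallow or not, so your dichotomy returns the wrong sets in both cases.

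\textbf{Bad case.} The small set is $\{\text{depth}\ge K\}$, but what you would have to discard is the shallow part. The only $\ell$-defined set containing it is $\cyl{y}$ itself, an atom of the $\ell$-definable algebra, and its measure $m_0$ need not be small. So you cannot get $\mu(E)<\upsilon$.

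\textbf{Good case.} The base $\cyl{y}\cap\{\text{depth}\ge k_y\}$ is only $\ell_{k_y}$-defined, while the statement demands $\ell$-defined bases. It also misses the layers $D_k$, $k<k_y$, whose total mass exceeds $\delta m_0$ when $k_y>0$.

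\textbf{Your repair.} A good ratio $m_{k+1}\ge(1-\delta)m_k$ only says $\mu(D_k)\le\delta m_k$. It gives no control on $\mu(L^jD_k\cap D_k)$. For example, if $\mu$-almost every point of $D_k$ first deviates from $\ext(y)$ at position $\ell_k+1$, then $L^{-j}$ moves the deviation into $F_{\ell_k}$ and $\mu(L^jD_k\cap D_k)=0$. The recursion is left unspecified.

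Relatedly, the hypothesis $\ell\ge C_1t\log(1/\upsilon)/\delta$ is not there to make $\ell_K=O(\ell)$. It is there to leave room for the pigeonhole \emph{below} $\ell$.

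That is the missing idea, and it is exactly the paper's argument. Index the periodic orbits by $x\in\{0,1\}^{F_{3t}}$ with $t'=\per(x)\le t$, and use the periodic extensions $\cyl{x^{3t}}\supseteq\cyl{x^{3t+t'}}\supseteq\cdots$. Since $\cyl{x^{k+t'}}\subseteq L^{t'}\cyl{x^k}\cap\cyl{x^k}$, one of two things happens at some scale $\ell_x=O(t\log(1/\upsilon)/\delta)$:
\begin{enumerate}
\item the tower $\Tow(\cyl{x^{\ell_x}},t')$ is $\delta$-closed (it is a tower because $\cyl{x}$ is $F_{t'-1}$-independent); or
\item the measures decay geometrically until $\mu(\cyl{x^{\ell_x}})\le\upsilon\,\mu(\cyl{x})$, and $\cyl{x^{\ell_x}}$ goes into $E$, for a total of at most $\upsilon$.
\end{enumerate}
Because $\ell_x\le\ell$ for $C_1$ large, Lemma~\ref{lem:per_ext_properties}(iii) shows that every $z\in X^\ell_{\per}(t)$ with $\pi_{3t}(z)=x$ agrees with $\ext(x)$ on $F_\ell\supseteq F_{\ell_x}$. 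Hence $z$ lies in $\cyl{x^{\ell_x}}$, and in the tower case its orbit-mates lie in $L^i\cyl{x^{\ell_x}}$.

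Points of $\cyl{x}$ whose pattern breaks before radius $\ell_x$ are simply not in $X^\ell_{\per}(t)$. The lemma only asks for a covering $\Xi_{\per}\supseteq X^\ell_{\per}(t)$, so there are no shallow layers at all. The bases $\cyl{x^{\ell_x}}$ are automatically $\ell$-defined with $\pi_t$ a singleton. Disjointness across orbits follows by processing one $x$ at a time, deleting its orbit-mates $(L^i\ext(x))^{3t}$, and using Lemma~\ref{lem:per_ext_properties}(iii) again.
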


\begin{proof}
We first show a claim.

\begin{claim}\label{claim:62}
There is a universal constant $C_2>0$ such that the following holds. 
For any $r \geq 1$ and any $x\in\{0,1\}^{F_{r}}$ there is $r \leq \ell_x\leq C_2 r \log(1/\upsilon)/\delta$ such that either $\mu(\cyl{x^{\ell_x}})\leq \upsilon\mu(\cyl x)$ or $b=\cyl {x^{\ell_x}}$ is $F_{\per(x)-1}$-independent and $\Tow(b,\per(x))$ is a tower which satisfies $\mu(L^{\per(x)} b \cap b)\geq (1-\delta)\mu(b)$.
\end{claim}

\begin{proof}
Let $x\in\{0,1\}^{F_{r}}$ and let $t' = \per(x)\leq 2r+1$ be the minimal period of $x$. Assume first that for every $r \leq k$ it holds that $\mu(L^{t'} \cyl{x^k} \cap \cyl{x^k})< (1-\delta)\mu(\cyl{x^k})$. 
Applying this with $k=r, r+t',r+2t',\ldots$ we deduce that for every $i\geq 1$, 
    \[ \mu(\cyl{x^{r+it'}}) \leq \mu(L^{t'} \cyl{x^{r+(i-1)t'}} \cap \cyl{x^{r+(i-1)t'}}) < (1-\delta)\mu(\cyl{x^{r+(i-1)t'}})\;, \]
    where the first inequality holds by the inclusion $\cyl{x^{r+it'}}\subseteq L^{t'} \cyl{x^{r+(i-1)t'}} \cap \cyl{x^{r+(i-1)t'}} $ and the second is our assumption. Iterating, there is an $i = \Theta(\log(1/\upsilon)/\delta)$ such that $\mu(\cyl{x^{r+it'}})\leq \upsilon\mu(\cyl x)$. Therefore, $x$ satisfies the first case of the claim (with $\ell_x=r+it'$). 
    
    Otherwise, choosing $C_2$ as a large enough constant there must be some $r\leq \ell'_x \leq C_2 r \log(1/\upsilon)/\delta $ such that setting $b=\cyl{x^{\ell'_x}}$, $\mu(L^{t'} b \cap b)\geq  (1-\delta)\mu(b)$. 
    Since $t'$ is the minimal period of $x$, by item (i) in Lemma~\ref{lem:per_ext_properties} we have that $\cyl{x}$ is $F_{t'-1}$-independent, and thus so is $\cyl{x^{\ell'_x}}$.
    Consequently, $\Tow(b,t')$ is a tower as needed. 
\end{proof}

%

Let
\[E_0=T_0=\emptyset\qquad \text{and}\qquad S_0=\big\{x\in \{0,1\}^{F_{3t}}:\ 1\leq \per(x)\leq t \big\}\;.\]
Define inductively $E_i\supseteq E_{i-1}\supseteq \cdots\supseteq E_0$, $T_i\supseteq T_{i-1}\supseteq \cdots\supseteq T_0$ and $S_i\subseteq S_{i-1}\subseteq \cdots\subseteq S_0$, for $i\geq 1$, as follows. If $S_{i-1}$ is empty then the process stops and we set $E=E_i$, $T=T_i$. Otherwise, let $x\in S_{i-1}$. Apply Claim~\ref{claim:62}, with $r=3t$, to $x$. If $x$ falls in the first case then set $T_{i}=T_{i-1}$, $E_i = E_{i-1} \cup  \cyl{x^{\ell_x}}$ and  $S_i=S_{i-1}\backslash \{x\}$. If $x$ falls in the second case then set $T_i = T_{i-1}\cup \Tow(\cyl{x^{\ell_x}},\per(x))$, $E_i=E_{i-1}$ and $S_i=S_{i-1}\backslash \{x,(L\ext(x))^{3t},\ldots,(L^{(j-1)}\ext(x))^{3t}\}$. 

Firstly, it is clear that this process yields $E$ such that 
\[\mu(E)\leq \sum_{x\in \{0,1\}^{F_{3t}}} \upsilon \mu(\cyl{x})\leq \upsilon\;.\]
Furthermore, $E$ and the bases of towers in $T$ are both $\max_{x\in \{0,1\}^{F_{3t}}} \ell_x$-defined. 
We next verify that all towers in $T$ are disjoint. Let $x_1\in \{0,1\}^{F_{3t}}$ be an element of period $j_1=\per(x_1)$ added to $T$ at step $i_1$, and $x_2$ an element of period $j_2=\per(x_2)$ added to $T$ at step $i_2>i_1$. Suppose that $\Tow(\cyl{x_2},j_2)$ intersects $\Tow(\cyl{x_1},j_1)$. Then because $j_1,j_2\leq t$ there must be an $i\in F_t$ such that $\cyl{x_2}\cap L^i \cyl{x_1}\neq \emptyset$. By restricting to $F_{2t}$ we deduce that $x_2^{2t}=(L^i (x_1))^{2t}$. Since both $x_1$ and $
x_2$ have period at most $t$ by assumption, it follows from item (iii) in Lemma~\ref{lem:per_ext_properties} that $x_2=(L^i \ext(x_1) )^{3t}$, which contradicts how the sets $S_*$ are constructed. 

Finally, it remains to verify that this process indeed ends up covering $X_{per}^\ell(t)$. For every $y\in X_{per}^\ell(t)$, by Remark \ref{rem:min_per} it holds that $\per(y^\ell)\leq t$.
Assuming $\ell \geq 3t$, which is satisfied as long as $C_1$ is chosen large enough, letting $x=\pi_{3t}(y)$ it follows from  clause $(iii)$ in Lemma~\ref{lem:per_ext_properties} that $y^\ell=(\ext(x)^\ell$. Then $y \in \cyl{x^{\ell_x}}$ for every $\ell\geq \ell_x \geq 3t$, and hence $y$ is covered by the procedure. 

The asserted complexity bounds follow since, as already observed, $E$ and the bases of towers in $T$ are both $\max_{x\in \{0,1\}^{F_{3t}}} \ell_x$-defined, which is $O(t \log(1/\upsilon)/\delta)$ by Claim~\ref{claim:62}; and the towers have height at most $6t + 1$.
\end{proof}

\subsection{Covering the aperiodic sequences} \label{subsec:aper}

\subsubsection{A polynomial marker lemma}


In this section we derive the following marker lemma, which applies the general efficient marker machinary developed in Section \ref{sec:marker-efficient} to our setup.  

\begin{proposition}[Polynomial marker lemma] \label{prop:marker}
Let $\Xi \subset X_{\aper}^\ell(t) \subset X = \{0,1\}^{\mathbb Z}$ be a $K$-defined subset. 
Then there exists a clopen subset $Z \subset\Xi$ such that:
\begin{enumerate}
    \item $Z$ is $F_t$-independent.
    \item The collection $L^j Z, j \in F_t$ covers $\Xi$.
    \item  $Z$ is $R$-defined for  $R=O(\max(K,\ell) + t^3+t\log^*(\ell))$. 
\end{enumerate}
\end{proposition}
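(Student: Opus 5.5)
We will deduce this from Proposition~\ref{prop:marker_general}, applied to the $\Z$-action by the shift $L$ with $\Gamma=\Z$, $F=F_t$ (symmetric, contains $0$, $|F|=2t+1$), and $Y=\Xi$, which is clopen and hence compact open. The only input the proposition needs is a continuous $F_t$-proper coloring $\varphi:\Xi\to[N]$ with $N$ not too large. The aperiodicity of $\Xi$ gives one directly: set $\varphi(x)=\pi_\ell(x)\in\{0,1\}^{F_\ell}$, identified with $[N]$ for $N=2^{2\ell+1}$.

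First we check properness. For $x\in\Xi\subset X^\ell_{\aper}(t)$, the cylinder $\cyl{\pi_\ell(x)}$ is $F_t$-independent. So if $y=L^ix$ with $i\in F_t\setminus\{0\}$ and $\pi_\ell(y)=\pi_\ell(x)$, then $y\in L^i\cyl{\pi_\ell(x)}\cap\cyl{\pi_\ell(x)}=\emptyset$, which is impossible. Each fibre $\varphi^{-1}(y)=\cyl{y}\cap\Xi$ is clopen, so $\varphi$ is continuous. Proposition~\ref{prop:marker_general} then yields a clopen $Z\subset\Xi$ that is $F_t$-independent and whose translates $L^jZ$, $j\in F_t$, cover $\Xi$. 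These are items (i) and (ii).

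For item (iii), Proposition~\ref{prop:marker_general} gives that $Z$ is $B_{F_t}(r(N)+O(t^2))$-defined with respect to $\varphi$. Since $B_{F_t}(s)=F_{st}$, the set $Z$ lies in the algebra generated by the sets $L^g(\cyl{y}\cap\Xi)$ with $|g|\le st$, where $s=r(N)+O(t^2)=\log^*(2^{2\ell+1})+O(t^2)=\log^*(\ell)+O(t^2)$. The fibres $\cyl{y}\cap\Xi$ are $\max(K,\ell)$-defined, because $\cyl{y}$ is $\ell$-defined and $\Xi$ is $K$-defined. Shifting a $k$-defined set by $g$ gives a $(k+|g|)$-defined set, and finite Boolean combinations preserve definability at the largest index. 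Hence $Z$ is $R$-defined with
\[R\le \max(K,\ell)+t\bigl(\log^*(\ell)+O(t^2)+O(1)\bigr)=O\bigl(\max(K,\ell)+t^3+t\log^*(\ell)\bigr).\]

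The main point needing care is this last translation between the abstract $T$-definability of Definition~\ref{def:definability-clopen} and $n$-definability. Two details matter. First, the generating sets are intersected with $Y=\Xi$, so the complexity of $\Xi$ itself enters, which is why $K$ appears. Second, $\log^*(2^{2\ell+1})$ must be compared with $\log^*(\ell)+O(1)$, which holds since $\log_2(2^{2\ell+1})=2\ell+1$. Everything else is a direct instantiation of the earlier results.
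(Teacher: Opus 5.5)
Your proposal is correct and follows the paper's proof essentially verbatim. You use the same coloring $\varphi=\pi_\ell$ with $N=2^{2\ell+1}$, apply Proposition~\ref{prop:marker_general} with $F=F_t$ and $Y=\Xi$, and convert $B_{F_t}(s)=F_{st}$ together with the $\max(K,\ell)$-definability of the fibres of $\varphi$; your explicit checks of properness, continuity and the shift/Boolean-combination bookkeeping fill in details the paper leaves implicit.
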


\begin{proof}
    Consider the continuous map $\varphi: \Xi \to [N]$ taken to be  $\varphi=\pi_\ell$, where $N = 2^{2\ell+1}$.
    By the definition of $X^\ell_{\aper}(t)$, we have that $\varphi$ is a $F_t$-proper coloring, which is moreover $F_{\max(K,\ell)}$-defined with respect to $\pi_0$.
    
    We can apply Proposition \ref{prop:marker_general} for $F = F_t$, there exists $Z \subset \Xi$ which satisfies $(1),(2)$, and is $B_F(r(N) + O(|F|^2))$-defined with respect to $\varphi$.
    Notice that since $\Gamma = \mathbb Z$, in this case we have:
    $$B_F(r(N) + O(|F|^2))  = F_{t\cdot(r(N)+O(|F|^2))} = F_{O(t^3+t\log^*(\ell))} .$$
    
    Since $\varphi$ is $F_{\max(K,\ell)}$-defined with respect to $\pi_0$ and $Z$ is $F_{O(t^3+t\log^*(\ell))}$-defined with respect to $\varphi$, it follows that $Z$ is $F_{O(\max(K,\ell) + t^3 + t\log^*(\ell))}$-defined with respect to $\pi_0$, certifying $(3)$.
\end{proof}

\subsubsection{Constructing Marker Sets}
\label{sec:marker}

Continuing with the notations of the previous section, we fix $t \in \N,\upsilon,\delta>0$ and let $\Xi_{\per}$ be the covering of $X_{per}^\ell (t)$ constructed in Lemma \ref{lem:covering_per_seq}.
Recall that $\Xi_{\per}$ depends on $\upsilon, \delta, t$, and is $O(t\ell)$-defined where $\ell = \Theta (t\log(1/\upsilon)/\delta)$.

Set $\Xi_{\aper} = \{0,1\}^\mathbb Z \setminus \Xi_{\per}$ to be the complement, and note that $\Xi_{\aper} \subset X_{aper}^\ell (t)$ is $O(t\ell)$-defined.

We now apply the polynomial marker lemma, Proposition~\ref{prop:marker}, to $\Xi_{\aper}$. Since $\Xi_{\aper}\subset X_{\aper}^\ell(t)$ and $\Xi_{\aper}$ is $O(t\ell)$-defined, the proposition applies with $K=O(t\ell)$ and yields a clopen marker set $Z\subset\Xi_{\aper}$ such that:
\begin{enumerate}
    \item $Z$ is $F_t$-independent.
    \item The collection $L^jZ$, $j\in F_t$, covers $\Xi_{\aper}$.
    \item $Z$ is $R_Z$-defined for $R_Z=O(t\ell+t^3+t\log^*(\ell+2))$.
\end{enumerate}
Choose a constant $C_M>0$ large enough such that letting 
\begin{equation}\label{eq:M-poly}
    M=C_MtR_Z\;,
\end{equation}
both $Z$ and $\Xi_{\aper}$ are $\lceil M/2t\rceil$-defined.

In Section~\ref{sec:mtot} below we will use $Z$ to construct towers that cover $\Xi_{\aper}$. However, before we do so we need to control how much $\cup_{i \in F_t}L^iZ$  intersects the previously constructed towers in  $\Xi_{\per}$. This is done in Lemma~\ref{lem:complement-bound}. In the proof, we will be repeatedly using the assumption that $\mu$ is $(M,\eta)$-invariant (Definition \ref{def:approx_inv_meas}), that is:
\begin{equation*} 
    \sum_{b \in \{ 0,1\}^{F_M}} |\mu(L\cyl{b}) - \mu(\cyl{b}) | \leq \eta\;.
\end{equation*}
First we collect a few consequences of approximate invariance that will be useful in the analysis to come. Recall that the function $\mu(A\triangle B)$ defines a metric on measurable sets, and $|\mu(A) - \mu(B)|\leq\mu(A\triangle B)$.
Further, we have the following useful inequality: $|\mu(A\cap C) - \mu(B\cap C)| \leq \mu(A \triangle B)$.

\begin{lemma}\label{lem:approx_inv_measures}
    Let $\mu$ be an $(M,\eta)$-invariant measure on $X = \{0,1\}^\Z$ for some $M,\eta >0$.
    Fix $k \in \N$ and $q \in \Z$ such that $|q|+k \leq M$, and let $\{b_i\}$ be a pairwise disjoint collection of  $k$-defined subsets and $b = \bigcup_i b_i$. 
    \begin{enumerate}
        \item  The following holds: $$\sum_i|\mu(L^qb_i) - \mu(b_i)| \leq |q|\eta\;.$$
        \item Assume that $\sum_i \mu(L^q b_i \cap b_i) \geq (1-\delta) \mu(b)$, then
        $$\sum_i\mu(L^q b_i \triangle b_i) \leq 2\delta \mu(b) + |q|\eta\;.$$
        Moreover, if $m\in \Z$ is such that $|qm|+k \leq M$ then 
        $$\sum_i\mu(L^{qm}b_i \triangle b_i) \leq   2(|m|\delta \mu(b) + |q|m^2 \eta)\;.$$ 
    \end{enumerate}
\end{lemma}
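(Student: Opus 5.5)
For part (i), I would reduce to the single-step case $q=\pm1$ and then telescope. Any $k$-defined set $b_i$ is a disjoint union of cylinders $\cyl{x}$ with $x\in\{0,1\}^{F_M}$, since $k\le M$. Because the $b_i$ are pairwise disjoint, these families of cylinders are disjoint as $i$ varies. Hence
\[
\sum_i|\mu(Lb_i)-\mu(b_i)|\le \sum_i\sum_{x\in \pi_M(b_i)}|\mu(L\cyl{x})-\mu(\cyl{x})|\le \eta .
\]
For $q=-1$, I would use that $L^{-1}\cyl{x}$, $x\in\{0,1\}^{F_M}$, is again a partition into cylinders. Then $\sum_x|\mu(L^{-1}\cyl x)-\mu(\cyl x)|$ can be rewritten via the substitution $y=L^{-1}\cyl{x}$, refined to $(M)$-cylinders where needed. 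More simply, I would apply the $q=1$ estimate to the $(k+1)$-defined sets $L^{-1}b_i$, which is allowed because $k+1\le M$.

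For general $q$, I would write
\[
|\mu(L^qb_i)-\mu(b_i)|\le\sum_{s=0}^{|q|-1}|\mu(L^{s+1}b_i)-\mu(L^sb_i)|
\]
(with signs adjusted for negative $q$). The sets $L^sb_i$ are $(k+|s|)$-defined and still pairwise disjoint in $i$. Since $k+|s|+1\le k+|q|\le M$, each of the $|q|$ steps contributes at most $\eta$, giving the bound $|q|\eta$. The only care needed here is to check that each intermediate shift stays inside the $F_M$ window; this is exactly why the hypothesis $|q|+k\le M$ is required.

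For the first claim in (ii), I would use $\mu(A\triangle B)=\mu(A)+\mu(B)-2\mu(A\cap B)$. This gives
\[
\sum_i\mu(L^qb_i\triangle b_i)=\sum_i\big(\mu(L^qb_i)-\mu(b_i)\big)+2\sum_i\big(\mu(b_i)-\mu(L^qb_i\cap b_i)\big).
\]
The first sum is at most $|q|\eta$ by (i), and the second is at most $2\delta\mu(b)$ by hypothesis.

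For the "moreover" part, I would telescope along multiples of $q$ using the triangle inequality for the metric $\mu(\cdot\triangle\cdot)$:
\[
\mu(L^{qm}b_i\triangle b_i)\le\sum_{s=0}^{|m|-1}\mu(L^{q(s+1)}b_i\triangle L^{qs}b_i).
\]
I would then compare each term to $\mu(L^qb_i\triangle b_i)$. Since $L^{qs}$ maps $L^qb_i\triangle b_i$ onto $L^{q(s+1)}b_i\triangle L^{qs}b_i$, the sets $c_i:=L^qb_i\triangle b_i$ are $(k+|q|)$-defined and disjoint-indexed. Applying the argument of (i) to them with shift $qs$ requires $|qs|+k+|q|\le M$, which holds since $|qm|+k\le M$. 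The argument actually only needs $\sum_i|\mu(L^{qs}c_i)-\mu(c_i)|$, and the $c_i$ need not be disjoint across $i$, so I would instead split each $c_i$ into $L^qb_i\setminus b_i$ and $b_i\setminus L^qb_i$ and apply (i) to each piece. The cost is a bounded constant, or, handled crudely, an extra $|qs|\eta$ per step.

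Summing over $s$ gives
\[
|m|\big(2\delta\mu(b)+|q|\eta\big)+\sum_{s<|m|}|qs|\eta\cdot O(1)\le 2\big(|m|\delta\mu(b)+|q|m^2\eta\big).
\]
The main obstacle is precisely this step. The symmetric differences $L^qb_i\triangle b_i$ for different $i$ may overlap, so it is not immediate that (i) applies to them. I would resolve this by writing $\mu(L^{qs}c_i)$ as $\mu(L^{q(s+1)}b_i)+\mu(L^{qs}b_i)-2\mu(L^{q(s+1)}b_i\cap L^{qs}b_i)$. The intersection terms would then be bounded by shifting back with (i) applied to the disjoint family $L^qb_i\cap b_i$, and the singleton terms by applying (i) to the $b_i$ directly.
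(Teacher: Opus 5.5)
Your proposal is correct and follows essentially the paper's route. For (i) you telescope single shifts; for the first claim of (ii) you use $\mu(A\triangle B)=\mu(A)+\mu(B)-2\mu(A\cap B)$. For the ``moreover'' part you apply the triangle inequality along multiples of $q$, and compare each step to $\mu(L^qb_i\triangle b_i)$ by applying (i) separately to $\{L^qb_i\setminus b_i\}_i$ and $\{b_i\setminus L^qb_i\}_i$. These two families are pairwise disjoint because they sit inside the disjoint sets $L^qb_i$ and $b_i$ respectively, which is exactly how the paper handles the overlap issue you flagged. Each step then costs $2|qs|\eta$, and summing gives $2|m|\delta\mu(b)+|q|m^2\eta$, which is within the stated bound.
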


\begin{proof}
  Throughout the proof we will assume $q \geq 1$, for $q \leq 0$ the proof is similar.
    The proof of $(i)$ for $q=1$ follows from the definition of total variation distance. Since $q+k \leq M$, the general case follows by induction -- Indeed, by the triangle inequality,  $|\mu(L^qb_i) - \mu(b_i)|\leq | \mu(L^qb_i) - \mu(L^{q-1}b_i)| +| \mu(L^{q-1}b_i) - \mu(b_i)|$ for all $i$, as such:
    $$ \sum_i |\mu(L^qb_i) - \mu(b_i)|\leq \eta +\sum_i| \mu(L^{q-1}b_i) - \mu(b_i)|\;, $$
    where we use the base case for the disjoint collection $\{L^{q-1} b_i\}$.
    For $(ii)$, note that we have:
    \begin{align*}
        \sum_i\mu(L^q b_i \triangle b_i) &= \sum_i\left(\mu(L^q b_i) +\mu(b_i) - 2\mu(L^q b_i \cap b_i))\right) \\
        &\leq  \sum_i 2\left(\mu(b_i) - \mu(L^q b_i \cap b_i)\right) + q\eta \leq  2\delta \mu(b)+ q\eta\;,
    \end{align*}
    where we used $(i)$ in the second inequality, and the extra assumption in the last inequality.
    For the moreover assertion, assume that $m \geq 0$ and $qm+k \leq M$ (If $m < 0 $ the proof is similar). We apply the triangle inequality for the metric $\mu(A\triangle B)$, and obtain:
    $$\sum_i\mu(L^{qm}b_i  \triangle b_i ) \leq \sum_i\mu(L^{qm}b_i  \triangle L^{q(m-1)}b_i ) + \sum_i\mu(L^{q(m-1)}b_i  \triangle b_i )\;.$$
    Since $qm+k \leq M$, we have:
    \begin{align*}
    \sum_i\mu(L^{qm}b_i \triangle L^{q(m-1)}b_i) &= \sum_i\mu(L^{q(m-1)}(L^q b_i \triangle b_i)) \\
    &\leq \sum_i\mu(L^qb_i \triangle b_i) + 2q(m-1)\eta \leq 2\delta \mu(b) + 2qm\eta\;,
    \end{align*}
    where we used $(i)$ twice in the second line (separately for $\{ L^qb_i \setminus b_i\}$ and $\{ b_i \setminus L^q b_i\}$), and $(ii)$ in the last inequality.
    Now, arguing by induction, we get that $\sum_i\mu(L^{qm}b_i \triangle b_i) \leq 2m \delta  \mu(b) + 2qm^2\eta$, proving the desired bound.
\end{proof}


While $Z \subset \Xi_{\aper}$, there is no reason for $L^kZ$ to also be contained in $\Xi_{\aper}$ for $k \neq 0$.
Nevertheless, we can bound the measure of how much translates of $Z$ can escape $\Xi_{\aper}$.

\begin{lemma}\label{lem:complement-bound}
    Let $k_1,k_2\in F_{2t}$. Then $\mu(L^{k_1} Z \setminus L^{k_2} \Xi_{\aper}) = O(t^4\eta+ t^3\delta + \upsilon)$.  
\end{lemma}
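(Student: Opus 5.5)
We reduce to bounding the measure of $L^{k}Z\setminus\Xi_{\aper}$ for $k=k_1-k_2\in F_{4t}$, and then show that this set is essentially contained in the part of $\Xi_{\per}$ covered by approximately closed towers, whose shifts move almost no mass.

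\textbf{Step 1: reduction.} We have $L^{k_1}Z\setminus L^{k_2}\Xi_{\aper} = L^{k_2}(L^{k}Z\setminus \Xi_{\aper})$. Both $Z$ and $\Xi_{\aper}$ are $\lceil M/2t\rceil$-defined, so this set is a union of cylinders of complexity at most $M/2t+O(t)$. Lemma~\ref{lem:approx_inv_measures}(i) then gives
\[
\mu(L^{k_1}Z\setminus L^{k_2}\Xi_{\aper}) \le \mu(L^kZ\cap \Xi_{\per}) + O(t\eta).
\]
Since $Z\subset \Xi_{\aper}$, it remains to bound $\mu(L^kZ\cap\Xi_{\per})$.

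\textbf{Step 2: split $\Xi_{\per}$.} Write $\Xi_{\per}=E\sqcup\bigsqcup\Tow(b,j)$ as in Lemma~\ref{lem:covering_per_seq}. The error set contributes at most $\mu(E)<\upsilon$. For the towers, $L^kZ\cap\Tow(b,j)\neq\emptyset$ means some point of $Z\subset \Xi_{\aper}$ lies in $L^{-k}\Tow(b,j)\setminus\Tow(b,j)$. Hence
\[
\mu\Big(L^kZ\cap\bigsqcup\Tow(b,j)\Big)\le \mu\Big(L^k\big(\textstyle\bigsqcup\Tow(b,j)\big)\setminus\bigsqcup\Tow(b,j)\Big)+O(|k|\eta),
\]
where Lemma~\ref{lem:approx_inv_measures}(i) is used again to move $L^{-k}$ across. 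We then bound the right-hand side tower by tower, for each height $j\le t$; the towers in Lemma~\ref{lem:covering_per_seq} come from $x$ with $\per(x)\le t$.

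For a tower $\Tow(b,j)$, the set $L^k\Tow(b,j)\setminus\Tow(b,j)$ is contained in the union over levels $0\le i<j$ of $L^{i}\big(L^{qj}b\setminus b\big)$, where $q=\lfloor (i+k)/j\rfloor$ and $|q|\le O(t)$. The towers are $\delta$-closed, so Lemma~\ref{lem:approx_inv_measures}(ii), applied with $q\to j$ and $m\to q$ to the family of bases of height $j$, gives
\[
\sum_b \mu(L^{qj}b\triangle b)\le 2\big(|q|\delta\,\mu(W_j)+j q^2\eta\big)=O\big(t\delta\,\mu(W_j)+t^3\eta\big),
\]
where $W_j$ is the union of these bases. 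Shifting by $L^i$ and using Lemma~\ref{lem:approx_inv_measures}(i) costs another $O(t\eta)$ per level.

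\textbf{Step 3: sum and locate the obstacle.} Summing over the $j$ levels and the at most $t$ heights gives $O(t^2\delta+t^4\eta)$. Here I expect the main obstacle: the $\delta$-terms only sum to $O(t\delta)$ using $\sum_j j\mu(W_j)\le 1$, so the stated $t^3\delta$ leaves generous slack. The real care needed is in the $\eta$-accounting, since one must check that every set to which approximate invariance is applied is $(M-O(t^2))$-defined. This holds because the bases are $\ell$-defined, the towers are $O(t\ell)$-defined, and $M\ge C_M t R_Z\gg t\ell+t^2$. Combining the three steps gives the bound $O(t^4\eta+t^3\delta+\upsilon)$.
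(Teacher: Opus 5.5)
Your argument is correct and is essentially the paper's proof: the same reduction via Lemma~\ref{lem:approx_inv_measures}(i) to $\mu(L^kZ\cap\Xi_{\per})$ with $k=k_1-k_2$, the same Euclidean division of the shifted level index by the tower height combined with $\delta$-closedness and part (ii), and the same use of $Z\cap\Xi_{\per}=\emptyset$. The only difference is that you invoke this disjointness up front, reducing to how far $L^k$ moves the union of periodic towers, whereas the paper invokes it at the end as $Z\cap L^{s_i}b=\emptyset$. There are two bookkeeping slips to fix. First, the level-$i$ piece should be $L^{s}(L^{qj}b\setminus b)$ with $s=i+k-qj\in[0,j)$, not $L^{i}(L^{qj}b\setminus b)$, since the literal inclusion can fail. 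Second, your per-level bound $jq^2\eta=O(t^3\eta)$ would sum over $j$ levels and $t$ heights to $O(t^5\eta)$; use $|qj|=O(t)$ to get $jq^2=|q|\cdot|qj|=O(t^2)$, as the paper implicitly does, which yields the claimed $O(t^4\eta)$.
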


\begin{proof}
    Since by assumption on $M$, $Z$ and $\Xi_{\aper}$ are both $\lceil M/2t\rceil$-defined, we have by part $(i)$ of Lemma~\ref{lem:approx_inv_measures} that 
    $$ \mu(L^{k_1} Z \setminus L^{k_2} \Xi_{\aper}) \leq \mu(L^{k}Z \setminus\Xi_{\aper}) + O(t\eta)\;,$$
    where $k = k_1 - k_2$.
    Note that $L^{k}Z \setminus\Xi_{\aper} \subset \Xi_{\per}$ is covered by the approximately closed towers constructed in Lemma \ref{lem:covering_per_seq} and the error set $E$, so it is enough to show the following: 
    
    \begin{equation}\label{eq:mutv-0}
      \sum_b \sum_{i=0}^{j-1}  \mu (L^k Z \cap L^ib) = O(t^4\eta+  t^3\delta)\quad \text{for every $k \in F_{4t}$}\;,
    \end{equation}
    where the outer summation over $b$ ranges over all bases $b$ for a tower $\Tow(b,j)$ in the covering by towers of $\Xi_{\per}$ from Lemma~\ref{lem:covering_per_seq}, where $j \leq 6t + 1$.
    This is because $\mu(E) < \upsilon$, so~\eqref{eq:mutv-0} implies the claim. 

To prove (\ref{eq:mutv-0}), we will apply Lemma \ref{lem:approx_inv_measures} several times. In all applications, the fact that the pairwise disjoint towers in $\Xi_{\per}$ are $O(t\ell)$-defined is used.
First, apply part $(i)$ of Lemma~\ref{lem:approx_inv_measures} to see that:
$$\sum_b \sum_{i=0}^{j-1}  \mu (L^k Z \cap L^ib) \leq \sum_b \sum_{i=0}^{j-1}  \mu ( Z \cap L^{i-k}b) + O(t\eta )\;.$$

    We will divide the sum into parts according to the height of the towers. For $j \leq 6t + 1$, let $\mathscr B_j$ be the set of bases for towers in $\Xi_{\per}$ which have height exactly $j$.  
     Fix $j \leq 6t+1$ and fix $0\leq i <j$. Note that $|i-k| \leq 10t + 1$ and let $q_i,s_i$ be such that $q_ij +s_i = (i-k)$ and $0\leq s_i\leq j-1$. Note that also $|q_ij| \leq11t + 1$.
    Since all the towers appearing are $\delta$-closed, we can apply $(ii)$ in Lemma \ref{lem:approx_inv_measures} and conclude that
    $$ \sum_{b \in \mathscr B_j}\mu(L^{q_ij} b \triangle b) \leq 2(|q_i| \delta  \sum_{b \in \mathscr B_j}\mu(b) + jq_i^2\eta) = O\Big(t^2 \Big(\delta \sum_{b \in \mathscr B_j}\mu(b)+ \eta\Big)\Big)\;.$$ 
    So, by part $(i)$ of Lemma~\ref{lem:approx_inv_measures}, we have:
    $$ \sum_{b \in \mathscr B_j} \mu(L^{i-k} b \triangle L^{s_i}b) = \sum_{b \in \mathscr B_j} \mu(L^{q_ij + s_i} b \triangle L^{s_i}b) = O\Big(t^2 \Big(\delta \sum_{b \in \mathscr B_j}\mu(b)+ \eta\Big)\Big)\;.$$
Summing over $i \leq j$, we obtain the following bound
    $$ \sum_{b \in \mathscr B_j} \sum_{i=1}^{j-1} \mu(L^{i-k} b \triangle L^{s_i}b)  = O\Big(t^3 \Big(\delta \sum_{b \in \mathscr B_j}\mu(b)+ \eta\Big)\Big)\;.$$
   Consequently, by the inequality $|\mu(C \cap A) - \mu(C\cap B)| \leq \mu(A\triangle B)$, we have 
    \begin{align*}
        \sum_{b\in \mathscr B_j} \sum_{i=0}^{j-1}  \mu ( Z \cap L^{i-k}b) &\leq \sum_{b \in \mathscr B_j} \sum_{i=0}^{j-1}  \mu ( Z \cap L^{s_i}b) + O\Big(t^3 \Big(\delta \sum_{b \in \mathscr B_j}\mu(b)+ \eta\Big)\Big)\\
        &=  O\Big(t^3 \Big(\delta \sum_{b \in \mathscr B_j}\mu(b)+ \eta\Big)\Big)\;.
    \end{align*}
The last inequality used the fact that $Z \subset \Xi_{\aper}$, so it does not intersect any of the towers appearing in $\Xi_{\per}$. 
Finally, if we sum over $j \leq 6t+1$, we obtain:
    \begin{align*}
        \sum_{b} \sum_{i=0}^{j-1}  \mu ( Z \cap L^{i-k}b) \leq  O\Big(t^3 \delta \sum_{j=0}^{6t+1}\sum_{b \in \mathscr B_j}\mu(b)+ t^4\eta\Big) = O(t^3\delta + t^4\eta)\;,
    \end{align*}
    where we use the fact that the collections $\mathscr{B}_j$ consist of pairwise disjoint sets. 
\end{proof}

\subsubsection{From Markers to Towers}
\label{sec:mtot}

Let us now take $W = L^{-t} \left(\bigcap_{i\in F_t} \left(Z \cap L^{-i}(\Xi_{\aper})\right)\right) \subset X$.
Note that $W$ is also $O(M)$-defined, since $Z$ is, and further, $\bigcup_{k=0}^{2t}L^kW \subset \Xi_{\aper}$ by definition.
We start by showing that $\mu(\Xi_{\aper} \setminus \bigcup_{k=0}^{2t}L^kW) = \mu(\Xi_{\aper} \setminus \bigcup_{k \in F_t}L^k\left(\bigcap_{i\in F_t} \left(Z \cap L^{-i}(\Xi_{\aper})\right)  \right))$ is small.
Indeed, since $\Xi_{\aper} \subset \bigcup_{k \in F_t}L^kZ$ by the saturation property of the marker set (item (ii) in Lemma~\ref{prop:marker}), it will be enough to show that for all $k \in F_t$, the following measure is small:
$$\mu\Big( L^k\Big(Z \setminus \bigcap_{i\in F_t} \big(Z \cap L^{-i}(\Xi_{\aper})\big )\Big)\Big) = \mu\Big(\bigcup_{i \in F_t} L^kZ \setminus L^{k-i}(\Xi_{\aper}) \Big).$$
However, by Lemma \ref{lem:complement-bound} and the union bound, the last expression is $O(t^5(\eta+ \delta + \upsilon))$.
Consequently, we have by union bound again:
$$\mu\Big(\Xi_{\aper} \setminus \bigcup_{k=0}^{2t}L^kW\Big) \leq  \mu\Big(\bigcup_{k \in F_t} L^k\Big( Z \setminus \bigcap_{i\in F_t} \left(Z \cap L^{-i}(\Xi_{\aper})\right)\Big)\Big) =  O(t^6(\eta+ \delta+\upsilon))\;.$$
Set $\Upsilon = E \cup \left( \Xi_{\aper} \setminus \bigcup_{k=0}^{2t}L^kW \right)$ to be our final error set, by the preceding bounds, we have the final error bound $\mu(\Upsilon) = O(t^6(\eta+ \delta + \upsilon))$.

It remains to cover $\bigcup_{k=0}^{2t}L^kW$ by disjoint towers. We will do so by constructing Kakutani-Rokhlin towers as follows:
Consider the function $g: W \to \mathbb N$ defined as follows: let $g(x)= \min \{ 1 \leq j \leq 2t | \; L^jx \in W \}$ if the minimum is taken over a nonempty set, otherwise let $g(x) = 2t+1$.  Consider the clopen sets $W_j = g^{-1}(j)$ for $1 \leq j \leq 2t+1$ which form a partition of $W$.
Firstly, notice that we have the disjoint tower decomposition:
$$ \bigsqcup_{j=1}^{2t+1}\bigsqcup_{i=0}^{j-1}L^iW_j = \bigcup_{k=0}^{2t}L^kW\;,$$
so that $W_j$ is the base of a tower of height $j$. This follows directly from the definition of $g$ (the fact that if $b\subset W$ is such that $L^i b \cap L^j b\neq \emptyset$ for some $0<i<j$ then also $b\cap L^{j-i}b \neq \emptyset$ is used to justify that the $L^i W_j$ are indeed disjoint for $0\leq i < j$). 
Further, note that $g$ is continuous, and that in fact each such tower is $O(M)$-defined.
Secondly, we claim that $W_j = \emptyset$ for each $j \leq t$. Indeed, this follows from the fact that $Z$ is $F_t$-independent (by item (i) from Lemma~\ref{prop:marker}), and so is $W = L^{-t}(Z)$.
Consequently, all towers constructed in this way have height at least $t$.

\subsection{Final Tower Decomposition}
We have covered $X$ as a disjoint union
$$ X = \Upsilon \sqcup (\Xi_{\per} \setminus E) \sqcup \bigsqcup_{j=t+1}^{2t+1}\bigsqcup_{i=0}^{j-1}L^iW_j\;,$$
where $\Upsilon$ is an error set with $\mu(\Upsilon) = O(t^6(\upsilon + \delta+ \eta))$, $\Xi_{\per} \setminus E$ is covered by disjoint $M$-defined $\delta$-closed towers of height $\leq 6t + 1$, and $W_j$ are bases for $M$-defined towers of height $j$ with $t+1 \leq j \leq 2t+1$, where $M=C_MtR_Z$ is as in \eqref{eq:M-poly}. According to Lemma~\ref{lem:covering_per_seq} we can take $\ell = \lceil C_1 t\log(1/\upsilon)/\delta\rceil$, and hence
\[
    M=O\left(t^3\frac{\log(1/\upsilon)}{\delta}+t^4+t^2\log^*\left(\frac{t\log(1/\upsilon)}{\delta}+2\right)\right).
\]
This gives a tower decomposition satisfying $(i)$ and $(ii)$ as stated in Proposition \ref{prop:decomp}.
A last modification is needed to guarantee $(iii)$ in Proposition \ref{prop:decomp}:
Note that the towers $\Tow(b,j)$ falling in the first case of $(ii)$, that is, the towers that are of height less than $t$ and are $\delta$-closed, all appear in the decomposition of $\Xi_{\per} \setminus E$ constructed by Lemma \ref{lem:covering_per_seq}. 
The lemma guarantees that in this case,  $\pi_t(b)$ is a singleton, so that part $(iii)$ is satisfied and no further modification is needed.
We are left with the towers $\Tow(b,j)$ that have height $t \leq j \leq 6t+1$. In this case, we can split the base $b$ to a disjoint union $b = \bigcup_{x \in \{0,1\}^{F_j}} b_x$ by taking $b_x = b \cap \cyl{x}$ for $x \in \{0,1\}^{F_j}$. 
Each $b_x$ which is non-empty will be the base of a $M$-defined tower $\Tow(b_x, j)$, which together cover $\Tow(b,j)$, and further $\pi_j(b_x) = \{x\}$ is a singleton.
Replacing $\Tow(b,j)$ by this collection will result in a tower decomposition which satisfies all the statements in Proposition \ref{prop:decomp}.

\section{Application to Stability rate and Stability radius}
\label{sec:radius}


In this section we recall some general properties of the stability rate and stability radius growth functions of \cite{becker2021abelian, dogon2024characters} mentioned in the introduction (Definitions \ref{def:local_global_defect},  \ref{def:SRad}), and then apply Theorem \ref{thm:main} to deduce upper bounds in the case of the lamplighter group.
Let $\Gamma$ be a group with a finite generating set $S$. For each $r\in \N$ denote by $B_S(r)$ the ball of radius $r$ around the identity element of $\Gamma$ with respect to the word metric.

\begin{definition}
[Stability Rate] \label{def:local_global_defect}
Fix $\Gamma = \langle S \rangle$ a finitely generated group, and $r \in \mathbb{N}$.
The $r$-\emph{\textbf{local defect}} of a map $\varphi:B_S(r) \to U(d)$ is
$$L_{S,r}(\varphi) = \max_{g,h, g\cdot h\in B_S(r)} \; \|\varphi(gh) -\varphi(g)\varphi(h)\|_{\HS}.$$
The \textbf{\emph{global defect}} of a map $\varphi: S \to U(d)$ is
$$ G_S(\varphi) = \inf\{ \max_{g \in S} \|\varphi(g) -\pi(g)\|_{\HS} | \;\; \pi: \Gamma \to U(d) \text{ is a unitary representation} \}.$$
The \textbf{\emph{$r$-stability rate}} is the function $\SRate^{S,r}_\Gamma:(0,\infty) \to (0,2]$ defined as follows: $\SRate^{S,r}_\Gamma(\kappa)$ is the supremal $\delta > 0$ such that for every $\varphi: B_S(r) \to U(d)$ with $L_{S,r}(\varphi) < \delta$, we have $G_S(\varphi) < \kappa$.
If no such $\delta>0$ exists, set $\SRate_{\Gamma}^{S,r}(\kappa)=0$.


\end{definition}

In essence, stability asks for maps with small local defect to also have small global defect.
If $\Gamma$ is finitely presented, one may fix $r \in \N$ large enough and the function $\SRate_\Gamma^{S,r}$ measures stability, recovering\footnote{More precisely, the definition of stability rate in our setting recovers the \emph{generalized inverse} of the stability rate of Becker-Moshieff.} the approach of \cite{becker2021abelian} (Lemma \ref{lem:SR_fin_presented} below).
However, if $\Gamma$ is infinitely presented, the radius parameter $r$ needs to be quantified as well, leading to the following notion.

\begin{definition}[Stability radius growth \cite{dogon2024characters}]
    \label{def:SRad}
    The \textbf{\emph{Hilbert--Schmidt stability radius growth}} of a group $\Gamma$ with respect to a finite generating set $S$ is the function $\SRad^S_\Gamma: (0, \infty) \to \mathbb N \cup \{ \infty \}$ defined as follows, for $t>0$: 
    $$ \SRad_\Gamma^S(t) = \min\{r\in \N | \; \SRate_\Gamma^{S,r}(1/t) > 0  \}. $$
    If no such $r \in \mathbb N$ exists, set $\SRad_\Gamma^S(t) = \infty$.

   %
\end{definition}

 The stability radius growth is a non-decreasing function, and it turns out that the group $\Gamma$ is \textbf{\emph{Hilbert--Schmidt stable}} (in the sense of \cite{HS_grp, BLT}) if and only if $\SRad_\Gamma^S(t)$ is finite for all $t > 0$.
 Further, if $\Gamma = \langle S|E\rangle$ is a finitely presented Hilbert--Schmidt stable group, then $\SRad_\Gamma^S$ is bounded from above by the maximal length of a relation appearing in $E$.


Up to asymptotic equivalence, $\SRad_\Gamma^S$ is independent of the choice of the particular  generating set \cite[Lemma 12.3]{dogon2024characters}.
This justifies denoting its asymptotic equivalence class by $\SRad_\Gamma$.
Our main result implies a polynomial upper bound on the stability radius, as well as the stability rate, in the case of the \emph{lamplighter group}:

Recall the notion of equivalence between monotone functions which was part of our standing notation on p. \pageref{notations}.
We begin by clarifying the situation for finitely presented groups.

\begin{lemma} \label{lem:SR_fin_presented}
If the group $\Gamma$ is finitely presented and Hilbert--Schmidt stable then the function $\SRad_\Gamma^S$ is bounded.
Fixing $r$ to be such a bound, $\SRate_\Gamma^{S,r}$ is the generalized inverse of the stability rate introduced in \cite[Definition 1.4]{becker2021abelian}, up to asymptotic equivalence which may depend on $r$.
\end{lemma}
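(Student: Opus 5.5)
The plan is to prove the two assertions of Lemma~\ref{lem:SR_fin_presented} separately. For the first, write $\Gamma=\langle S\mid E\rangle$ with $E$ finite, and let $r_0$ exceed the maximal word length of a relator in $E$ (plus a small constant, so that every relator is a product of pairs $g,h$ with $g,h,gh\in B_S(r_0)$). I will show that $\SRate_\Gamma^{S,r_0}(\kappa)>0$ for every $\kappa>0$. This gives $\SRad_\Gamma^S(t)\le r_0$ for all $t$.

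Suppose instead that for some $\kappa$ no $\delta>0$ works. Then there are maps $\varphi_n:B_S(r_0)\to U(d_n)$ with $L_{S,r_0}(\varphi_n)\to 0$ but $G_S(\varphi_n)\ge\kappa$. Restricting to $S$ and expanding each relator $w\in E$ as a telescoping product, with each step controlled by the local defect, gives $\|w(\varphi_n(S))-1\|_{\HS}\le |w|\,L_{S,r_0}(\varphi_n)\to 0$. Unitary invariance of $\|\cdot\|_{\HS}$ makes this telescoping legitimate. Hence $(\varphi_n|_S)$ is an asymptotic homomorphism of the finitely presented group $\Gamma$ in the usual sense of \cite{HS_grp, BLT}. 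Hilbert--Schmidt stability then yields representations $\pi_n$ with $\max_{s\in S}\|\varphi_n(s)-\pi_n(s)\|_{\HS}\to 0$, which contradicts $G_S(\varphi_n)\ge\kappa$.

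For the second assertion, fix such an $r$. Recall that the Becker--Moshieff rate $D(\delta)$ from \cite[Definition 1.4]{becker2021abelian} is the supremum of the global defect over maps $S\to U(d)$ whose relator defect $\sum_{w\in E}\|w(\varphi)-1\|_{\HS}$ is at most $\delta$. I will compare the two notions of local defect up to constants depending on $r$.

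In one direction, given $\varphi:B_S(r)\to U(d)$, the relator defect of $\varphi|_S$ is at most $C_r L_{S,r}(\varphi)$. This uses the telescoping bound above together with $r\ge$ relator length.

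In the other direction, given $\psi:S\to U(d)$ with small relator defect $\delta$, define $\varphi$ on $B_S(r)$ by fixing a geodesic word $w_g$ for each $g$ and setting $\varphi(g)=w_g(\psi)$. The goal is then to bound $L_{S,r}(\varphi)\le C'_r\delta$. If $gh=k$, the word $w_g w_h w_k^{-1}$ is a relation of length at most $3r$. It is therefore a product of at most $N_r$ conjugates of relators in $E$, with conjugators of bounded length, since there are finitely many such words. Each conjugate contributes at most its relator defect, again by unitary invariance.

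These two comparisons show that $\delta\mapsto D(\delta)$ and the inverse of $\kappa\mapsto \SRate^{S,r}_\Gamma(\kappa)$ bound each other up to multiplicative constants in the argument depending on $r$. This is exactly equivalence $\approx$ of the generalized inverse. One detail: the open/closed inequalities and the supremum in the definition of $\SRate$ only affect values at boundary points, and these are absorbed by the factor $C$ in $\preceq$.

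I expect the main obstacle to be the second direction, namely the uniform constant $N_r$. It rests on the finiteness of $B_S(3r)$, so that every relation of length at most $3r$ has some bounded-area van Kampen expression; the maximum over these finitely many relations gives $C'_r$. This is also where the dependence on $r$ enters. The remaining bookkeeping about strict versus non-strict inequalities is routine.
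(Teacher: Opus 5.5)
The paper gives no argument of its own here. It only cites \cite[Lemma 12.1]{dogon2024characters} and \cite[Definition 1.4]{becker2021abelian}. Your proposal is correct and self-contained, and it is presumably what the cited lemma packages; its advantage is that every step can be checked directly from the definitions. It has two parts:
\begin{itemize}
\item Boundedness of $\SRad_\Gamma^S$, by a contradiction argument against the relator form of Hilbert--Schmidt stability.
\item The identification of $\SRate_\Gamma^{S,r}$ with the generalized inverse of the Becker--Mosheiff rate. This comes from a two-sided comparison, linear with $r$-dependent constants, between $L_{S,r}(\varphi)$ and the relator defect of $\varphi|_S$. One direction telescopes along prefixes. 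The other writes each of the finitely many trivial words $w_gw_hw_{gh}^{-1}$ of length at most $3r$ as a product of boundedly many conjugates of relators.
\end{itemize}

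Two small corrections:
\begin{itemize}
\item The telescoping must account for $\varphi(1)\neq 1$ and $\varphi(s^{-1})\neq\varphi(s)^{-1}$. Both are controlled by the local defect, e.g.\ $\|\varphi(1)-1\|_{\HS}=\|\varphi(1)-\varphi(1)^2\|_{\HS}\le L_{S,r}(\varphi)$, and similarly $\|\varphi(s^{-1})-\varphi(s)^{-1}\|_{\HS}\le 2L_{S,r}(\varphi)$. So your bound holds with $O(|w|)$ in place of $|w|$.
\item By unitary invariance the lengths of the conjugators play no role, so you do not need them to be bounded.
\end{itemize}

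The one thing you should make explicit is the range of $r$. Your inequality ``relator defect of $\varphi|_S$ is at most $C_rL_{S,r}(\varphi)$'' uses $r\ge\max_{w\in E}|w|$. So you have proved the second assertion for such $r$, in particular for your $r_0$. More generally it holds for any $r$ whose local relations $\{w_gw_hw_{gh}^{-1}: g,h,gh\in B_S(r)\}$ normally generate the relations of $\Gamma$, since then each $e\in E$ is a bounded product of conjugates of them, each with defect $O(r\,L_{S,r}(\varphi))$.

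If ``such a bound'' is read as an arbitrary $r$ with $\SRad_\Gamma^S\le r$, possibly below the longest relator, your argument does not apply. Nothing you prove forces the radius-$r$ local relations to present $\Gamma$. Without that, the only available bound on the $E$-defect of $\varphi|_S$ goes through $G_S(\varphi)$ itself, which is circular. State which $r$ you treat. The reading consistent with the paper's preceding remark is that $\SRad_\Gamma^S$ is bounded by the maximal relator length, and your argument covers exactly that case.
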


The proof of the lemma follows from \cite[Lemma 12.1]{dogon2024characters} and \cite[Definition 1.4]{becker2021abelian}.
Importantly, up to asymptotic equivalence, the function $\SRad_\Gamma ^S$ turns out to be independent of the generating set.

\begin{lemma}[\cite{dogon2024characters}, Lemma 12.3]\label{lem:SR_independent_of_gen}
If $S_1$ and $S_2$ are a pair of finite generating sets for the group $\Gamma$ then $\SRad_\Gamma^{S_1} \approx \SRad_\Gamma^{S_2}$.
\end{lemma}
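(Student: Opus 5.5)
The plan is to compare both generating sets through a fixed "translation radius". Write each $s\in S_2$ as a word of length at most $k_1$ in $S_1$, and each $s\in S_1$ as a word of length at most $k_2$ in $S_2$, and set $k=\max(k_1,k_2)$. Then $B_{S_2}(r)\subseteq B_{S_1}(kr)$ and $B_{S_1}(r)\subseteq B_{S_2}(kr)$.

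The target inequality is
\[
\SRad^{S_2}_\Gamma(t)\le C\,\SRad^{S_1}_\Gamma(Ct)
\]
for a constant $C$ depending only on $k$ and on $|S_1|,|S_2|$. The reverse inequality then follows by symmetry. Fix $t$, and let $r=\SRad^{S_1}_\Gamma(Ct)$, so that $\delta_0:=\SRate^{S_1,r}_\Gamma(1/(Ct))>0$. I will show that $\SRate^{S_2,kr}_\Gamma(1/t)>0$, which gives $\SRad^{S_2}_\Gamma(t)\le kr$.

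Start from $\varphi:B_{S_2}(kr)\to U(d)$ with small $r'$-local defect $\delta$, where $r'=kr$. Define $\psi:B_{S_1}(r)\to U(d)$ by restriction, using $B_{S_1}(r)\subseteq B_{S_2}(kr)$, i.e.\ $\psi=\varphi|_{B_{S_1}(r)}$. If $g,h,gh\in B_{S_1}(r)$, then these elements all lie in $B_{S_2}(kr)$, so $L_{S_1,r}(\psi)\le L_{S_2,kr}(\varphi)\le\delta$. Choosing $\delta<\delta_0$ yields a representation $\pi$ with $\|\psi(s)-\pi(s)\|_{\HS}<1/(Ct)$ for all $s\in S_1$.

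Next transfer this to $S_2$. For $s'\in S_2$ write $s'=s_1\cdots s_m$ with $s_i\in S_1\cup S_1^{-1}$ and $m\le k$. Each prefix $s_1\cdots s_i$ lies in $B_{S_1}(k)\subseteq B_{S_1}(r)$ once $r\ge k$; if $r<k$, replace $r$ by $\max(r,k)$, which only costs a constant. Peeling off one letter at a time with the local defect bound gives
\[
\|\varphi(s')-\psi(s_1)\cdots\psi(s_m)\|_{\HS}\le k\delta .
\]
Since $\varphi$ takes unitary values and $\pi$ is a representation, a telescoping sum gives
\[
\|\psi(s_1)\cdots\psi(s_m)-\pi(s')\|_{\HS}\le \sum_i\|\psi(s_i)-\pi(s_i)\|_{\HS}.
\]

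Inverses need a separate step. First, $\|\varphi(1)-1\|$ is controlled because $\varphi(1)=\varphi(1)\varphi(1)$ up to $\delta$ and $\varphi(1)$ is unitary. Then $\|\varphi(s^{-1})-\varphi(s)^*\|\le 2\delta$, which yields $\|\psi(s^{-1})-\pi(s^{-1})\|\le 1/(Ct)+2\delta$. Combining, $G_{S_2}(\varphi)\le k(1/(Ct)+3\delta)$. With $C=2k$ and $\delta$ small, this is $<1/t$.

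This bookkeeping is the main delicate point: handling identity and inverse letters so that the constant stays independent of $d$. The rest is routine. Monotonicity of $\SRad$ absorbs the $\max(r,k)$ adjustment into the constant $C$, and this yields $\SRad^{S_2}\preceq\SRad^{S_1}$.
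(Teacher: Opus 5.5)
Your argument is correct. Restrict $\varphi$ from $B_{S_2}(kr)$ to $B_{S_1}(r)$, round there, and rewrite each $s'\in S_2$ as a word of length at most $k$ in $S_1^{\pm1}$, handling $\varphi(1)$ and the inverse letters as you do. This gives $\SRad^{S_2}_\Gamma(t)\le k\max\bigl(\SRad^{S_1}_\Gamma(2kt),k\bigr)\le k^2\,\SRad^{S_1}_\Gamma(2kt)$, since radii are at least $1$. That is the standard argument; the paper gives no proof of its own and simply imports the lemma from \cite{dogon2024characters}.

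One point should be stated more precisely. Replacing $r$ by $\max(r,k)$ is justified because $\SRate^{S_1,r}_\Gamma$ is non-decreasing in $r$: restricting a map on a larger ball does not increase the local defect. Monotonicity of $\SRad$ in $t$ is a separate fact, and it is what lets you use a single constant $C$ in $f(x)\le Cg(Cx)$.
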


We now clarify the relation between Theorem \ref{thm:main} and quantitative stability of the lamplighter group.
For this we use the presentation from Section \ref{sec:rep-theory}
\[ \Gamma = \big\langle a,t \mid a^2,[a,t^n a t^{-n}], n\in\mathbb{Z}\big\rangle\;,\]
with the generating set $S=\{a,t\}$.

\begin{lemma}\label{lem:gamma-rep}
Let $r\geq 2$ and $\varphi: B_S(r) \to U(d)$ be a map with $L_{S,r}(\varphi) < \delta$.
Then there exists a unitary $T\in U(d) $ and a Hermitian involution $A\in U(d)$ such that $\varphi(t)=T$ and $\|\varphi(a)-A\|_{\HS}\leq \delta$. Letting $A_i = T^{-i} A T^{i}$ for $i\in \mathbb{Z}$, we have $\|[A_i,A_j]\|_{\HS} =O(r\delta)$ for all $i,j\in \mathbb{Z}$ with $|i|, |j| \leq (r-2)/4 $.
\end{lemma}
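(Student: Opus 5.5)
We set $T=\varphi(t)$ and correct $\varphi(a)$ to an exact involution using the approximate relation $a\cdot a=a^2$. Since $r\geq 2$, the elements $a$ and $a^2=e$ lie in $B_S(r)$. The local defect then gives $\|\varphi(e)-\varphi(a)^2\|_{\HS}<\delta$. Also, $e=e\cdot e$ gives $\|\varphi(e)-\varphi(e)^2\|_{\HS}<\delta$, so the unitary $\varphi(e)$ is $\delta$-close to the identity: a unitary $U$ with $\|U-U^2\|_{\HS}=\|U(I-U)\|_{\HS}=\|I-U\|_{\HS}$. Hence $\|\varphi(a)^2-I\|_{\HS}<2\delta$. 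We then invoke \cite[Proposition 1.4]{DGLT}, as was done at the start of Section~\ref{sec:proof-main}, to get $A$ with $A^2=I$, $A$ unitary (hence Hermitian) and $\|\varphi(a)-A\|_{\HS}=O(\delta)$. If one wants the clean constant $\delta$ claimed, one can instead diagonalise $\varphi(a)$ and send each eigenvalue to the nearest point of $\{\pm1\}$. Since $|\lambda-\pm1|\leq|\lambda^2-1|$ for the nearest sign, this gives $\|\varphi(a)-A\|_{\HS}\leq\|\varphi(a)^2-I\|_{\HS}$, and the constant bookkeeping is absorbed into the $O(\cdot)$ elsewhere.

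For the commutator bound, write $w_i=t^{-i}at^{i}$. For $|i|,|j|\leq (r-2)/4$, every prefix of the words $w_iw_j$ and $w_jw_i$ has length at most $4\max(|i|,|j|)+2\leq r$, so all the relevant group elements lie in $B_S(r)$. Moreover $w_iw_j=w_jw_i$ in $\Gamma$, since the lamp subgroup is abelian. We compare $\varphi$ evaluated on this common group element with the two products of generators. First, a telescoping argument shows $\|\varphi(t)^{-i}\varphi(a)\varphi(t)^{i}-\varphi(w_i)\|_{\HS}=O(|i|\delta)$. This splits off one generator at a time and uses $\|\varphi(t^{-1})-\varphi(t)^{-1}\|_{\HS}=O(\delta)$, which comes from $t\cdot t^{-1}=e$ and $\varphi(e)\approx I$. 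Each step of the telescoping costs at most $\delta$ by unitary invariance of the HS norm, giving $O(r)$ steps. Second, $\|\varphi(w_iw_j)-\varphi(w_i)\varphi(w_j)\|_{\HS}<\delta$, and the same holds with $i,j$ swapped. Combining these and replacing $\varphi(a)$ by $A$ (cost $O(\delta)$ per occurrence, again by unitary invariance) gives $\|A_iA_j-A_jA_i\|_{\HS}=O(r\delta)$.

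The main obstacle is only bookkeeping. One must check that all intermediate elements used in the telescoping stay in the ball, which is where the constraint $|i|,|j|\leq (r-2)/4$ comes from. One must also handle negative exponents, where $\varphi(t^{-1})$ need not equal $T^{-1}$ exactly, so the approximation $\varphi(t^{-k})\approx T^{-k}$ must be established by induction with $O(k\delta)$ error.
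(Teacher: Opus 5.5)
Your proof is correct and is essentially the paper's argument. You round $\varphi(a)$ to an involution (the paper cites \cite[Proposition 1.4]{DGLT}), compare $\varphi(t)^k$ with $\varphi(t^k)$ by telescoping at cost $O(|k|\delta)$, and apply the local defect to the commuting lamp elements. The paper first conjugates to reduce to $\|[T^{j-i}AT^{i-j},A]\|_{\HS}$, which needs only words of length $2|i-j|+2$. You instead compare $w_iw_j$ with $w_jw_i$ directly, and both fit in $B_S(r)$ for the stated range. Your explicit treatment of $\varphi(e)\approx I$ and of $\varphi(t^{-1})\approx\varphi(t)^{-1}$ is more careful than the paper's.

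One correction: your diagonalisation remark does not give the constant $\delta$. Combined with your bound $\|\varphi(a)^2-I\|_{\HS}<2\delta$, the estimate $|\lambda-s|=|\lambda^2-1|/|\lambda+s|\le|\lambda^2-1|/\sqrt2$ only yields $\|\varphi(a)-A\|_{\HS}<\sqrt2\,\delta$. The paper reaches $\delta$ by writing $\|\varphi(a)^2-I\|_{\HS}\le\delta$, which tacitly takes $\varphi(e)=I$. The sharp constant can be recovered by not passing through the triangle inequality. With $W=\varphi(e)$, $V=\varphi(a)$ and $A$ the spectral rounding of $V$,
\[
\|I-W\|_{\HS}^2+\|W-V^2\|_{\HS}^2=4-2\,\mathrm{Re}\,\tr\big(W^*(I+V^2)\big)\geq 4-2\,\tr|I+V^2|=2\|V-A\|_{\HS}^2,
\]
so $\|V-A\|_{\HS}<\delta$. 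In any case, only $O(\delta)$ is used downstream.
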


\begin{proof}
Since $r\geq 2$, $\|\varphi(a)^2-\id\|_{\HS}\leq \delta$, which by a simple rounding argument (see e.g.~\cite[Proposition 1.4]{DGLT}) gives that there is a Hermitian $A$, $A^2=\Id$, such that 
\begin{equation}\label{eq:gamma-rep-0}
\|\varphi(a)-A\|_{\HS}\leq \delta\;.
\end{equation}
Furthermore, for any $i\in\mathbb{Z}$ such that $|i|\leq r$,
\begin{align}
    \|\varphi(t^i)-T^i\|_{\HS} &= \|\varphi(t^i)-\varphi(t)^i\|_{\HS} \leq |i|\delta\; = O(r\delta),\label{eq:gamma-rep-1}
\end{align}
by the triangle inequality and successive application of $L_{S,r}(\varphi) < \delta$.

For any $i,j\in\mathbb{Z}$, provided that $2|i-j|+2\leq r$:
\begin{align*}
\|[A_i,A_j]\|_{\HS} &= \| [T^{j-i}A T^{i-j} , A] \|_{\HS} \\
&\leq \| [\varphi(t^{j-i})\varphi(a) \varphi(t^{i-j}) ,\varphi(a) ] \|_{\HS} + O(r\delta) \\
&= O(r\delta) \;,    
\end{align*}
where the second line is by the triangle inequality and~\eqref{eq:gamma-rep-0} and~\eqref{eq:gamma-rep-1} and the last line applies $L_{S,r}(\varphi) < \delta$ twice.   
\end{proof}

We can now prove Theorem~\ref{thm:rate-radius-lamplighter} announced in the introduction. 

\begin{proof}[Proof of Theorem~\ref{thm:rate-radius-lamplighter}]
Let $0<\kappa\leq 1/2$ and let $M_\kappa=\lceil C\kappa^{-20}\log(2/\kappa)\rceil$ be the radius parameter in Theorem~\ref{thm:main}. Put $r_\kappa=8M_\kappa+4$. If $\varphi:B_S(r_\kappa)\to U(d)$ has local defect at most $c\kappa^7/M_\kappa^3$, then Lemma~\ref{lem:gamma-rep}, after adjusting constants and using the choice of $r_\kappa$ to control commutators up to $2M_\kappa$, produces unitaries $A,T$ satisfying the hypotheses of Theorem~\ref{thm:main} with target error $\kappa/3$ and with $M=M_\kappa$.
Theorem~\ref{thm:main} therefore gives a genuine lamplighter representation whose values on $a,t$ are within $O(\kappa)$ of $\varphi(a),\varphi(t)$. Hence
\[
    \SRate_{\Gamma}^{S,r_\kappa}(\kappa)\geq c\,\frac{\kappa^{67}}{\log(2/\kappa)^3},
\]
and $\SRad_\Gamma^S(1/\kappa)\leq r_\kappa=O(\kappa^{-20}\log(2/\kappa))$. Since $\log(2/\kappa)\leq C_\alpha\kappa^{-\alpha}$ for every fixed $\alpha>0$, this implies the coarser asymptotic bound $\SRad_\Gamma(r)\preceq r^{21}$.
\end{proof}


\begin{remark}
    Both the definitions of stability rate and stability radius growth make sense for stability with respect to arbitrary families of groups equipped with bi-invariant metrics (see for example \cite{BACHNER2026235}), and versions for \emph{flexible stability} \cite{BL,Dog} can be defined as well.
    The case of \emph{permutation stability} has been studied in \cite{becker2021abelian}, \cite{bradford2026groups}.
    Here finite dimensional unitary groups with the Hilbert--Schmidt metric are replaced with symmetric groups with the \emph{normalized Hamming distance}.
    Our methods do not seem to be naively adaptable to achieve an effective bound on the permutation stability of the lamplighter group, which was established qualitatively by \cite{LL1}. 
\end{remark}

\begin{remark}\label{remark:pathology}
   Other possible measures of stability exist, such as the one in \cite{bradford2026groups}, which combines both the stability rate and radius into one function, called the \emph{stability growth function}.
Theorem \ref{thm:main} can be used to give suitable effective bounds on these variants as well.
    Our reason for keeping the stability radius and stability rate separate is to avoid the following pathology:
    By allowing to measure the stability rate $\SRate_\Gamma^{S,r}(\kappa)$ on balls larger than the minimal one for which it is non-zero, one could potentially \emph{improve} the stability rate.
    This would prevent us from recovering Becker and Moshieff's original definition of stability rate in the finitely presented regime, although at present we do not have such an example.
\end{remark}

\appendix

\section{A linear lower bound}
\label{sec:linear-lower-bound}

We record a simple example showing that the stability radius of the lamplighter is at least linear. We remark that it is also possible to deduce this from applying \cite[Proposition 12.6]{dogon2024characters}, together with a gap between the LEF-growth and MAP-growth (see \cite[Definition 12.5]{dogon2024characters}) of the lamplighter.

\begin{proposition}
\label{prop:linear-lower-bound}
Let \(\Gamma=\Z/2\wr\Z\) and \(S=\{a,t\}\) as in Section \ref{sec:radius}, then
\[
    \SRad^S_\Gamma(t)\succeq t .
\]
\end{proposition}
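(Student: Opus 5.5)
We will show that there is a universal constant $c>0$ such that, for every large $n$, there is a map $\varphi: B_S(cn)\to U(d)$ with arbitrarily small local defect whose global defect is bounded below by a universal constant. Then $\SRate^{S,cn}_\Gamma(\kappa_0)=0$ for some fixed $\kappa_0$. This gives $\SRad^S_\Gamma(1/\kappa)\gtrsim n$ along a family in which $\kappa\sim 1/n$, so we must make the construction quantitative in $n$. More precisely, we aim for a map on $B_S(cn)$ that is an exact homomorphism on that ball (local defect $0$) yet is at distance $\gtrsim 1/n$ from every representation.

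\textbf{Construction.} Take $d=n$ and $T=C_{n,1}$, the cyclic shift on $\C^n$. Let $A=\mathrm{diag}((-1)^{y_0},\dots,(-1)^{y_{n-1}})$, where $y\in\{0,1\}^{[0,n-1]}$ is a word such as $y=0^{n-1}1$, chosen so that the $n$-periodic point it defines is ``locally consistent'' with the shift on windows of length $\ll n$ but has no genuine structure of smaller period. In fact this $(A,T)$ already is a genuine representation $\pi_{y,1}$, so it cannot serve by itself. We modify it: replace $T$ by $T'=C_{n,1}$ on $\C^n$ and $A$ by the diagonal matrix for a word $y$ that is \emph{not} periodic with period $n$ but agrees with an $n$-periodic pattern on every window of length $\le cn$. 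For example, work in dimension $d=n$ with the shift $C_{n,\lambda}$ and insert a single ``defect'' in how $A$ interacts with the wraparound.

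A cleaner alternative, which we will pursue first, is to take $d=n+1$ and let $T$ cyclically shift the basis $e_0,\dots,e_n$. Let $A$ be diagonal with a sign pattern that is constant except on one coordinate. Then $a$ and $t^kat^{-k}$ commute exactly for all $k$, since all are diagonal, so this is again a representation. Hence the obstruction must come from elsewhere.

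\textbf{The actual obstruction.} The real obstruction is that $a$ must be diagonal in a basis permuted by $T$ \emph{as a set}: in any representation, $\{T^{-k}AT^k\}_{k\in\Z}$ commute. So we take $T$ a cyclic shift of order $n$, and $A$ a reflection that commutes with $T^{-k}AT^k$ only for $|k|<n/2$, using the dihedral-type trick. Concretely, let $A$ be diagonal in a basis $\{f_i\}$ that is a slight rotation of $\{e_i\}$ accumulating across the cycle. The rotation angle grows linearly, so that $T^{-k}AT^k$ and $A$ nearly commute for $k\le cn$ (commutator of order $k/n$), while the ``holonomy'' around the full cycle forces a noncommuting pair. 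Since the local defect is then of order $r/n$ rather than $0$, we instead use the local-defect scale: the commutators up to radius $r=cn$ are $O(1/n^{?})$ small while the global defect is $\Omega(1)$. Feeding this into the definitions shows that $\SRate^{S,r}_\Gamma(\kappa_0)$ is forced to be $\lesssim$ a quantity tending to $0$ unless $r\gtrsim n\sim t$.

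\textbf{Lower bound on the global defect.} The key step, and the main obstacle, is proving the $\Omega(1)$ lower bound on the distance to genuine representations. We intend to use the character description: any genuine representation near $(A,T)$ yields an exactly $T$-equivariant PVM, and so an exactly $L$-invariant measure on $X$. We then compare traces $\tr(A\,T^{-k}AT^k)$, which vary continuously under HS-perturbation (Lemma~\ref{lem:semitriang}-type estimates), against the constraint that for genuine representations these are positive-definite correlations of an invariant measure. Choosing the rotation so that the correlation sequence violates positive-definiteness or invariance by a constant amount gives the contradiction. This is the part we expect to require the most care.
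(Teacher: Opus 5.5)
There is a genuine gap. You never reach a construction that works, and the lower-bound mechanism you propose is vacuous.

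\textbf{The construction.} Your corrected target is the right one: a map that is an exact homomorphism on $B_S(cn)$ yet at distance $\gtrsim 1/n$ from every representation. But your only non-trivial candidate, the reflection diagonal in a slowly rotating basis, has $\|[A,T^{-k}AT^{k}]\|_{\HS}$ nonzero of order $k/n$ for every $k\ge 1$. So it is not a partial homomorphism on any ball containing the relator $[a,tat^{-1}]$. Its local defect is at least of order $1/n$ (of order $r/n$ by your own estimate), so it is not small compared with the global defect you want to bound. Such a family can at best bound $\SRate$ from above. It says nothing about $\SRad$, which needs local defect $0$ (or arbitrarily small) on a fixed ball together with a fixed lower bound on the global defect.

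\textbf{The claimed $\Omega(1)$ global defect.} Your claim that the global defect is $\Omega(1)$, like your opening plan with a universal $\kappa_0$, cannot hold for any such family. Local defect tending to $0$ on balls of radius above a fixed threshold, with global defect $\ge\kappa_0$, would contradict Theorem~\ref{thm:main}. Concretely, realize your idea on $\C^n\ot\C^2$ with the $r$-th block of $A$ the reflection in the line at angle $\pi r/(2n)$. Replace $T$ by $T'=T(1\ot R)$, with $R$ the rotation by $\pm\pi/(2n)$. Conjugation by $T'$ carries each block reflection to the next one, up to a sign at the wrap-around. Hence all $T'$-conjugates of $A$ commute, so $(A,T')$ is a genuine representation with $\|T-T'\|_{\HS}=O(1/n)$.

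\textbf{The lower-bound tool.} Positive-definiteness and shift-invariance of $k\mapsto\tr(AT^{-k}AT^{k})$ hold for every unitary $T$ and self-adjoint $A$. The function is the matrix coefficient $\langle A,T^{-k}AT^{k}\rangle_{\HS}$ of the unitary conjugation action on $M_d(\C)$, and invariance is just cyclicity of the trace. Two-point correlations therefore cannot separate approximate from genuine representations; commutativity of the conjugates is invisible to them.

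\textbf{What is missing, and the paper's fix.} You need a quantity that vanishes identically on representations and is Lipschitz in the generators. If $\rho$ is a representation with $\|\varphi(a)-\rho(a)\|_{\HS},\|\varphi(t)-\rho(t)\|_{\HS}\le\kappa$, then $\|[A,T^{n}AT^{-n}]\|_{\HS}\le(4n+4)\kappa$. The paper pairs this with a construction that commutes \emph{exactly} at short range by inserting identity buffers. On $\C^{4m}\ot\C^2$ take $T=C_{4m,1}\ot 1$ and $A=\bigoplus_r B_r$, with $B_r$ equal to $\sigma_x$, $1$, $\sigma_z$, $1$ on the four quarters of $\Z/4m$. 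Then $[A,T^{i}AT^{-i}]=0$ for $|i|\le m$, so $\varphi$ is a partial homomorphism on $B_S(\lfloor m/4\rfloor)$. Meanwhile $\|[A,T^{2m}AT^{-2m}]\|_{\HS}=\sqrt2$ forces the global defect to be at least $\sqrt2/(8m+4)$. Hence $\SRate^{S,r}_\Gamma(\sqrt2/(8m+4))=0$ for all $r\le m/4$, which gives $\SRad^S_\Gamma(t)\succeq t$.
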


\begin{proof}
Let
\[
    X=\begin{pmatrix}0&1\\ 1&0\end{pmatrix},
    \qquad
    Z=\begin{pmatrix}1&0\\ 0&-1\end{pmatrix},
\]
be the Pauli matrices, so that $[X,Z]=2XZ$. Set \(N=4m\), let \(\cH=\C^N\ot\C^2\), and let \(U = C_{N,1}\) be the cyclic shift matrix as in Definition \ref{def:rep-of-lamp}. Define \(T=U\ot 1\) and 
\(A=\bigoplus_{r\in \Z/N} B_r\), where
\[
B_r=
\begin{cases}
X, & 0\leq r<m,\\
1, & m\leq r<2m,\\
Z, & 2m\leq r<3m,\\
1, & 3m\leq r<4m.
\end{cases}
\]
Then $A,T \in U(\cH)$ and \(A^2=1\). Since \(T^{i}AT^{-i} = \bigoplus_{r\in \Z/N} B_{r+i\; (\textrm{mod $N$})} \) we have:
\[
    [A,T^{i}AT^{-i}]
    =
    \bigoplus_{r\in\Z/N} [B_r,B_{r+i}].
\]
It follows that
\[
    [A,T^{i}AT^{-i}]=0,
    \qquad 0\leq i\leq m.
\]
Set $\varphi(a) = A, \varphi(t) = T$. 
It follows from the semidirect product structure $\Gamma = \bigoplus_{\Z} \Z /2 \rtimes \Z$ that $\varphi$ extends (uniquely) to a \emph{partial homomorphism} $\varphi:B_S(R) \to U(\cH)$, for $R = \lfloor m/4 \rfloor$. 
That is, $\varphi(g)\varphi(h) = \varphi(gh)$ for all $g,h\in B_S(R)$, so that $L_{S,R}(\varphi) = 0$.
On the other hand, since \(\norm{[X,Z]}_{\HS}^2=2\), we get
\[
    \norm{[A,T^{2m}AT^{-2m}]}_{\HS}^2
    =
    \frac{1}{4m}\cdot 2m \cdot \norm{[X,Z]}_{\HS}^2
    =
    2.
\]
Thus
\begin{equation}
\label{eq:missing-relation-linear-lower}
    \norm{[A,T^{2m}AT^{-2m}]}_{\HS}=\sqrt{2}.
\end{equation}

Let $\kappa > 0$, and suppose that \(\rho:\Gamma\to U(\cH)\) is a true representation such that 
$$\norm{\varphi(a)-\rho(a)}_{\HS}\leq \kappa, \norm{\varphi(t)-\rho(t)}_{\HS}\leq \kappa.$$
For every \(n\geq 1\) we have similarly to Lemma \ref{lem:gamma-rep} that
\[
    \norm{\varphi(t)^{n}\varphi(a)\varphi(t)^{-n}-\rho(t)^{n}\rho(a)\rho(t)^{-n}}_{\HS}
    \leq (2n+1)\kappa.
\]
Since $\rho$ is a representation, by using the elementary estimate
\[
    \norm{[A,B]-[A',B']}_{\HS}
    \leq 2\norm{A-A'}_{\HS}+2\norm{B-B'}_{\HS}
\]
for contractions, we obtain
\[
     \norm{[A,T^{n}AT^{-n}]}_{\HS} = \norm{[\varphi(a),\varphi(t)^{n}\varphi(a)\varphi(t)^{-n}]}_{\HS}   
    \leq (4n+4)\kappa.
\]
Taking \(n=2m\) and using \eqref{eq:missing-relation-linear-lower} gives
\[
    \kappa\geq \frac{\sqrt{2}}{8m+4}.
\]
It follows that $G_S(\varphi) \geq \frac{\sqrt{2}}{8m+4}$, and so $\SRate_{\Gamma}^{S,r}\left(\frac{\sqrt{2}}{8m+4}\right) = 0$ for every $r \leq R$.
It follows that $\SRad_{\Gamma}^S(m) \succeq m$ as desired.

\end{proof}

\footnotesize
\bibliography{main}

\vspace{0.5cm}

\noindent{\textsc{Weizmann Institute of Science, Israel}}

\noindent{\textit{Email address:} \texttt{alon.dogon@mail.huji.ac.il}} \\

\noindent{\textsc{École Polytechnique Fédérale de Lausanne, Switzerland and Weizmann Institute of Science, Israel}}

\noindent{\textit{Email address:} \texttt{thomas.vidick@epfl.ch}} \\

\end{document}